\newcommand{\mt}{\mathbf{t}}
\newcommand{\mn}{\mathbf{n}}
\newcommand{\ms}{\mathbb{S}}
\newcommand{\be}{\begin{eqnarray}}
\newcommand{\ee}{\end{eqnarray}}
\newcommand{\bsi}{\bm{\sigma}}
\newcommand{\bta}{\bm{\tau}}
\newcommand{\bd}{\operatorname{div}}
\newcommand{\ben}{\begin{equation*}}
\newcommand{\een}{\end{equation*}}
\newtheorem{theorem}{Theorem}[section]
\newtheorem{lemma}{Lemma}[section]
\newtheorem{co}{Corollary}[section]
\newtheorem{remark}{Remark}[section]
\numberwithin{equation}{section}
\numberwithin{table}{section}
\numberwithin{figure}{section}
\begin{document}

\title[]{Optimality of adaptive $H(\operatorname{div}\operatorname{div})$ mixed finite element methods for the Kirchhoff-Love plate bending problem}

\author[J. Hu]{Jun Hu}
\address{LMAM and School of Mathematical Sciences, Peking University,
	Beijing 100871, P. R. China. }
\address{Chongqing Research Institute of Big Data, Chongqing 401333, P. R. China}
\email{hujun@math.pku.edu.cn}

\author[R. Ma]{Rui Ma}
\address{School of Mathematics and Statistics, Beijing Institute of Technology, 
	Beijing 100081, P. R. China. }
\email{rui.ma@bit.edu.cn}

\author{Min Zhang}
\address{College of Science, Beijing Forestry University,
	Beijing {\rm100083}, P. R. China.}
\email{zhangminD01@bjfu.edu.cn}

\thanks{The first author was supported by the National Natural Science Foundation of China under grant No. 12288101. The second author was supported by NSFC project No. 12301466 and by NSFC project No. 12422114. The third author was supported by NSFC project No. 1240510 and by grant No. BLX202347.}

\subjclass[2020]{Primary 65N30, 65N15, 65N50}

\keywords{Adaptive Mixed Finite Element Method, Mixed Boundary Conditions, A Posteriori Error Estimate, Kirchhoff-Love Plate}



\maketitle
\begin{abstract}
 This paper presents a reliable and efficient residual-based a posteriori error analysis for the symmetric $H(\operatorname{div}\operatorname{div})$ mixed finite element method for the Kirchhoff-Love plate bending problem with mixed boundary conditions. The key ingredient lies in the construction of boundary-condition-preserving complexes at both continuous and discrete levels.
 Additionally, the discrete symmetric $H(\operatorname{div}\operatorname{div})$ space is extended to ensure nestedness, which leads to optimality for the adaptive algorithm. Numerical examples confirm the effectiveness of the a posteriori error estimator and demonstrate the optimal convergence rate under adaptive refinements. 
\end{abstract}

\section{Introduction}
Fourth-order partial differential equations are fundamental in elasticity theory, fluid mechanics, potential theory, and many other scientific and engineering disciplines. However, despite extensive research, the development of truly practical and efficient finite element methods remains limited. Their
conforming finite element methods require $C^1$-continuity, which demands finite element spaces of high-degree polynomials and extra smoothness at vertices. 
It is not easy to implement such elements, and the expected high convergence rate is rarely manifest under quasi-uniform mesh refinements.

Nonconforming and discontinuous finite element methods are alternative methods for fourth-order problems; see \cite{morley1967triangular,ming2007nonconforming,AdiniNonFem,de1974variational,brenner2005c,suli2007hp,georgoulis2009discontinuous,cockburn2009hybridizable} for instance. Besides, mixed finite element methods have also been attractive for solving fourth-order problems, such as the Ciarlet-Raviart method \cite{ciarlet1974mixed} and the Hellan–Herrmann–Johnson (H-H-J) method \cite{hellan1967analysis,herrmann1967finite,johnson1973convergence}. However, the Ciarlet-Raviart method may fail to maintain equivalence with the original problem  on nonconvex domains \cite{zhang2008invalidity,brenner2015c}. The H-H-J method has not yet been extended to three dimensions.

The adaptive finite element method offers advantages in improving computational efficiency\cite{stevenson2008completion}. A posteriori error estimates for the fourth-order problem have been studied in various settings: for conforming $C^1$ elements\cite{verfurth2013posteriori,gustafsson2018posteriori}; for the nonconforming Morley element \cite{da2007posteriori,hu2009new, beirao2010posteriori,hu2012convergence,carstensen2014discrete}; for penalty and discontinuous Galerkin methods \cite{brenner2010posteriori,georgoulis2011posteriori,xu2014posteriori,da2007family,hansbo2011posteriori}; for the Ciarlet-Raviart mixed method \cite{charbonneau1997residual,gudi2011residual}; and for the H-H-J method \cite{huang2011convergence}. However, studies on optimality of the associated adaptive finite element methods remain relatively scarce. The main difficulty lies in the non-nestedness of the finite element spaces.
In \cite{carstensen2021hierarchical}, a hierarchical space for the Argyris element is proposed to overcome the non-nestedness in two dimensions. For the Morley element, optimality results are available in \cite{hu2012convergence,carstensen2014discrete}. The quasi-optimality of an adaptive hybridizable $C^0$ discontinuous Galerkin method for Kirchhoff plates is analyzed in \cite{sun2018quasi}, and an error estimator along with optimality results for the H-H-J method is given in \cite{huang2011convergence}.

In recent years, mixed finite element methods involving symmetric $H(\operatorname{div}\operatorname{div})$ conforming finite elements become increasingly popular for solving biharmonic equations. Chen and Huang  introduce  symmetric $H(\operatorname{div}\operatorname{div})$ finite elements on triangular meshes \cite{chen2020finite2d} and tetrahedral meshes \cite{ChenHuang20223d}. Subsequently, a family of symmetric $H(\operatorname{div}\operatorname{div})\cap H(\operatorname{div})$ conforming finite elements is proposed in \cite{hu2021family}, incorporating additional constraints 
of $ H(\operatorname{div})$ conforming finite elements in \cite{hu2014family,HuZhang2015} to ensure 
$H(\operatorname{div}\operatorname{div})$-conformity. The construction  is uniform in both two and three dimensions. Based on this idea, extensions to any dimension can be found in \cite{chen2022finite}, which use the $H(\operatorname{div})$ conforming finite elements in higher dimensions \cite{hu2015finite}. Recently,  new $H(\operatorname{div}\operatorname{div})$ conforming finite elements \cite{fuhrer2024mixed} have been proposed in two dimensions which relax the continuity at vertices. Nevertheless, there exists one global constraint at each vertex. An a posteriori error estimate of their elements is also provided under clamped boundary conditions. The extension of such elements for higher dimensions is available in \cite{chen2025new}.

The first objective of this paper is to provide a residual-based a posteriori error estimator for the triangular $H(\operatorname{div}\operatorname{div})$ element proposed in \cite{hu2021family}. The analysis is carried out for the Kirchhoff-Love plate bending problem with mixed boundary conditions.
By appropriately combining degrees of freedom, additional constraints are incorporated into the symmetric bending moment space, enabling the exact enforcement of the mixed boundary conditions. The reliability and efficiency of the estimator are established.
The key ingredient is to characterize the $H^1$ vector-valued space within the divdiv complex for this case. 
In particular, the boundary conditions associated with functions in this space are nontrivial. 
More precisely, it is shown that the divdiv complexes at both continuous and discrete levels are exact if and only if: for each connected component of the free boundary, the trace of the function lies in the Raviart–Thomas (RT) space; and for each edge of the simply supported boundary, the normal trace of the function is constant.

Another objective of this paper is to establish optimality of the adaptive $H(\operatorname{div}\operatorname{div})$ mixed finite element method. The main challenge lies in the non-nestedness of the symmetric $H(\operatorname{div}\operatorname{div})$ element space in \cite{hu2021family}, resulting from the extra $C^0$ vertex continuity of functions in it.
Inspired by the partial relaxation strategy in \cite{hu2021partial,carstensen2021hierarchical}, an extended $H(\operatorname{div}\operatorname{div})$ element space is introduced and shown to be nested. Based on this, the optimal convergence of the adaptive algorithm is established via the unified analysis from \cite{hu2018unified}. Numerical results demonstrate the effectiveness of the a posteriori estimator. 
It is shown that the adaptive $H(\operatorname{div}\operatorname{div})$ mixed method achieves the optimal convergence rate under adaptive mesh refinements. Although the exact solution on nonconvex domains suffers from $\operatorname{div}{\bm \sigma}\notin L^2(\Omega;\mathbb{R}^2)$, the symmetric $H(\operatorname{div}\operatorname{div})\cap H(\operatorname{div})$ conforming finite elements in \cite{hu2021family} do converge. Notably, the basis functions of the $H(\operatorname{div}\operatorname{div})$ element \cite{hu2021family} are explicitly defined and provided in the appendix, which is essential for efficient numerical implementation. 


The outline of this paper is as follows. Section 2 introduces the notation and 
presents the Kirchhoff-Love plate bending problem with mixed boundary conditions.
Section 3 presents the mixed formulation, its discretization, and establishes exact divdiv complexes with boundary conditions at both the continuous and discrete levels over a contractible domain. Section 4, building on these complexes, develops a residual-based a posteriori error estimator and proves its reliability and efficiency, and further addresses a posteriori error estimates for the postprocessed deflection. Section 5 is devoted to the optimal convergence analysis of the adaptive algorithm. Section 6 reports numerical examples. The appendix provides a detailed proof of the continuous-level inf-sup condition and presents the basis functions of the $H(\operatorname{div}\operatorname{div},\mathbb S)$ conforming space from \cite{hu2021family}.

\section{Preliminary}
This section introduces some notations and presents the Kirchhoff-Love bending model problem with mixed boundary conditions.
\subsection{Notation}
Given a bounded, simply connected  Lipschitz polygonal domain $\Omega\subset\mathbb{R}^{2}$ with boundary $\partial\Omega$, let $\mathbf n=(n_1,n_2)^{\mathrm T}$ denote the unit outward normal and $\mathbf t=(-n_2,n_1)^{\mathrm T}$ is a unit tangential vector. Assume that the clamped boundary $\Gamma_C$ is compact and of positive measure, while the part of simply-supported boundary and free boundary  $\Gamma_S\cup\Gamma_F$ is the relative open complement $\Gamma_S\cup\Gamma_F=\partial \Omega\backslash \Gamma_C$. 

Assume that $G\subset \mathbb{R}^{2}$ is a bounded and topologically trivial domain. The $L^{2}$ scalar product over $G$ is denoted by $(\cdot,\cdot)_{G}$; in particular, if $G$ is an edge, it is written as $\langle\cdot,\cdot\rangle_{G}$. Let $\|\cdot\|_{0,G}$ represent the $L^{2}$ norm over a set $G$, and $\|\cdot\|_{0}$ abbreviates $\|\cdot\|_{0,\Omega}$. For positive integer $m$, let $H^{m}(G;\mathbb X)$ represent the Hilbert space consisting of functions within domain $G$, taking values in space $\mathbb X$, and with all derivatives of order at most $m$ square integrable. The associated norm and semi-norm are denoted as $\|\cdot\|_{m,G}$ and $|\cdot|_{m,G}$, respectively. The space $H_{0}^{m}(G;\mathbb X)$ denotes the closure in $H^{m}(G;\mathbb X)$ of the set of infinitely differentiable functions with compact supports in $\Omega$. 
Similarly, let $C^{m}(G;\mathbb X)$ denote the space of $m$-times continuously differentiable functions, taking values in $\mathbb X$. The range space $\mathbb X$ could be $\mathbb R$, $\mathbb{R}^{2}$,  $\mathbb M$, or $\mathbb{S}$, and $\mathbb X$ is omitted when $\mathbb X$ is $\mathbb R$, where  $\mathbb{M}$ denotes the space of\ $2\times 2$\ real matrices,\ and  $\mathbb{S}$ is the subspace of symmetric matrices.

Let $P_{k}(G;\mathbb X)$ denote the space of polynomials of degree no more than $k$ on $G$,\ taking values in the space $\mathbb X$; or simply $P_k$  when there is no possible confusion.

For $m = 1,2$, define 
\begin{equation*}
    H^m(\Gamma_X):=\{v\in H^m(E)\, \, \text{for any straight edge}\, \,  E\subset \Gamma_X\}.
\end{equation*}
Throughout the paper, $\Gamma_X$ could be any and all of boundaries $\Gamma_C$, $\Gamma_S$, and $\Gamma_F$.

Suppose $\mathcal{T}_h$ is a shape regular subdivision of $\Omega$ consisting of triangles. Let $\mathcal{E}_{h}$ and $\mathcal{V}_{h}$ be the set of all edges and vertices of $\Omega$ regarding to $\mathcal{T}_{h}$, respectively. Let $\mathcal  V_h(\Omega)$ denote the set of interior vertices of $\mathcal T_h$, and $\mathcal V_h(\Gamma_X)$ represent the set of vertices on $\Gamma_X$. Let $\mathcal  E_h(\Omega)$ denote the set of interior edges of $\mathcal T_h$, and $\mathcal E_h(\Gamma_X)$ represent the set of edges on $\Gamma_X$. Given $K\in\mathcal{T}_{h}$, let $\mathcal{E}(K)$ denote the set of all edges of $K$, and $h_{e}$ stands for the diameter of edge $e\in \mathcal{E}_{h}$. 
Let $h_{K}$ be the diameter of $K\in \mathcal{T}_{h}$, and the mesh size of $\mathcal{T}_{h}$ is denoted by $h:=\max\limits_{K\in\mathcal{T}_{h}}h_{K}$.

Besides, the jump of $u$ across an interior edge $e$ shared by two neighboring elements $K_+$ and $K_-$ is defined by
\begin{equation*}
[\![u]\!]_e:=(u|_{K_+})|_{e}-(u|_{K_-})|_{e}.    
\end{equation*}
When it comes to any boundary edge $e\subset\partial \Omega$, the jump $[\![\cdot]\!]_e$ reduces to the trace. Given an edge $e\in \mathcal E_h$, let $\mathbf{n}_e=(n_1,n_2)^{\mathrm T}$ and $\mathbf{t}_e=(-n_2,n_1)^{\mathrm T}$ be a unit normal and unit tangential vector of $e$, respectively. In the case of a boundary edge $e\subset \partial\Omega$, $\mathbf n_e$ coincides with the outward normal vector $\mathbf n$.

Throughout the paper, an inequality $\alpha\lesssim \beta$ replaces $\alpha\leq C \beta$ with some multiplicative mesh-size independent constant $C>0$, which depends on $\Omega$ and boundary conditions only. 
\subsection{Differential operators}
The derivatives $\partial/\partial {x}$ and $\partial/\partial {y}$ are abbreviated as 
$\partial_{x}$ and $\partial_y$, respectively. For a vector field $\bm \phi=(\phi_1,\phi_2)^{{\mathrm T}}$, the gradient and $\operatorname{curl}$ operators apply by row to produce matrix-valued functions, namely, 
\[\nabla \bm \phi=\begin{pmatrix}
    \partial_x \phi_1 &\partial_y\phi_1\\
    \partial_x \phi_2 &\partial_y \phi_2
\end{pmatrix},\quad \operatorname{curl}\bm \phi =\begin{pmatrix}
    \partial_y \phi_1 &-\partial_x \phi_1\\
    \partial_y \phi_2 &-\partial_x \phi_2
\end{pmatrix}.\]

Given a matrix-valued function $\bm \sigma$, define its symmetric part as $\operatorname{sym}\bm \sigma: = \frac{1}{2}(\bm \sigma + {\bm \sigma}^{{\mathrm T}})$. Let the $(i,j)$-th entry of $\bm \sigma$ be denoted as $\sigma_{ij}$. 

For matrix-valued functions, the operators $\operatorname{rot}$ and $\operatorname{div}$ apply by row to produce a vector field: 
\begin{equation*}
    \operatorname{rot}\bm \sigma = \begin{pmatrix}
        \partial_x \sigma_{12}-\partial_y \sigma_{11}\\
        \partial_x \sigma_{22}-\partial_y \sigma_{21}
    \end{pmatrix}, \quad \operatorname{div}\bm \sigma = \begin{pmatrix}
        \partial_x \sigma_{11}+\partial_y \sigma_{12}\\
        \partial_x \sigma_{21}+\partial_y \sigma_{22}
    \end{pmatrix}.
\end{equation*}
The Sobolev space $H(\operatorname{div}\operatorname{div},\Omega;\mathbb S)$ reads 
\begin{equation*}
    H(\operatorname{div}\operatorname{div},\Omega;\mathbb S):= \{\bm \tau\in L^2(\Omega;\mathbb S): \operatorname{div}\operatorname{div}\bm \tau\in L^2(\Omega)\},
\end{equation*}
equipped with the squared norm $\|\bm \tau\|_{H(\operatorname{div}\operatorname{div})}^2: = \|\bm \tau\|_0^2+\|\operatorname{div}\operatorname{div}\bm \tau\|_0^2$.
Define
\begin{equation*}
	H^1(\operatorname{div},\Omega;\mathbb S):=\{\bm{\tau}\in H^1(\Omega;\mathbb S): \operatorname{div} \bm{\tau}\in H^1(\Omega;\mathbb R^2)\}.
\end{equation*}

Given any sufficiently smooth vector-valued function $\bm \phi$,
\[\operatorname{sym}\operatorname{curl}\bm \phi = \begin{pmatrix}
    \partial_y \phi_1 &
    \frac{\partial_y \phi_2-\partial_x \phi_1}{2}\\
    \frac{\partial_y \phi_2-\partial_x \phi_1}{2}&-\partial_x \phi_2
\end{pmatrix},\]
which is a symmetric matrix-valued function with the following identities hold. The first one can be found in \cite[Lemma 10]{hu2022conforming}; For completeness, a proof is provided below. 
\begin{lemma}
    For $\bm \phi\in C^2(\Omega;\mathbb R^2)$, there holds 
    \begin{align}
        &\mathbf n^{\mathrm T} (\operatorname{sym}\operatorname{curl}\bm \phi)\mathbf n = \partial_t(\bm \phi\cdot \mathbf n),\, \, \mathbf t^{\mathrm T}( \operatorname{sym}\operatorname{curl}\bm \phi)\mathbf n = \frac{1}{2}\left(\partial_t(\bm \phi\cdot\mathbf t)-\partial_n(\bm \phi \cdot \mathbf n)\right),\label{symcurlv:2}\\
        &\mathbf n^{\mathrm T}\operatorname{div}(\operatorname{sym}\operatorname{curl}\bm \phi)=\frac{1}{2}\partial_t(\operatorname{div}\bm \phi).\label{symcurlv:3}
    \end{align}
\end{lemma}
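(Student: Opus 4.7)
The proof is essentially a direct computation, but I would structure it so that the algebra is short and transparent. Throughout I will use the standing identities $\mathbf n = (n_1,n_2)^{\mathrm T}$, $\mathbf t = (-n_2,n_1)^{\mathrm T}$, $n_1^2+n_2^2=1$, and the decomposition of directional derivatives $\partial_t = -n_2\partial_x + n_1\partial_y$, $\partial_n = n_1\partial_x + n_2\partial_y$. Since the identities are pointwise in nature and $\mathbf n$, $\mathbf t$ are treated as constants (they are constant along any straight edge, and the statement makes sense pointwise for smooth $\bm\phi$), no differentiation of $\mathbf n$ or $\mathbf t$ is required.

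For \eqref{symcurlv:2}, the plan is to substitute the explicit formula
\[
\operatorname{sym}\operatorname{curl}\bm\phi = \begin{pmatrix} \partial_y\phi_1 & \tfrac12(\partial_y\phi_2-\partial_x\phi_1)\\ \tfrac12(\partial_y\phi_2-\partial_x\phi_1) & -\partial_x\phi_2\end{pmatrix}
\]
into $\mathbf n^{\mathrm T}(\cdot)\mathbf n$ and $\mathbf t^{\mathrm T}(\cdot)\mathbf n$, and expand in terms of $n_1,n_2$ and the entries $\partial_j\phi_i$. On the other side, I would expand $\partial_t(\bm\phi\cdot\mathbf n) = n_1\partial_t\phi_1 + n_2\partial_t\phi_2$ via $\partial_t=-n_2\partial_x+n_1\partial_y$, and similarly for $\partial_t(\bm\phi\cdot\mathbf t)$ and $\partial_n(\bm\phi\cdot\mathbf n)$. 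A coefficient-by-coefficient match of the four terms $\partial_x\phi_1,\partial_y\phi_1,\partial_x\phi_2,\partial_y\phi_2$ then closes the first identity. For the second (with the factor $1/2$), the simplification relies on $n_1^2-n_2^2$ and $-2n_1n_2$ appearing as coefficients in both expansions after using $n_1^2+n_2^2=1$ to combine the diagonal contributions.

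For \eqref{symcurlv:3}, the cleanest route is to first verify the pointwise identity
\[
\operatorname{div}(\operatorname{sym}\operatorname{curl}\bm\phi) = \tfrac12\begin{pmatrix}\partial_y(\partial_x\phi_1+\partial_y\phi_2)\\ -\partial_x(\partial_x\phi_1+\partial_y\phi_2)\end{pmatrix} = \tfrac12\begin{pmatrix}\partial_y\\ -\partial_x\end{pmatrix}(\operatorname{div}\bm\phi),
\]
which is obtained by applying $\operatorname{div}$ row-wise to the explicit matrix above and observing that the cross term $\partial_x\partial_y\phi_1$ (resp.\ $\partial_x\partial_y\phi_2$) combines by factor of $1/2$. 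Taking the inner product with $\mathbf n$ then gives $\tfrac12(n_1\partial_y - n_2\partial_x)\operatorname{div}\bm\phi = \tfrac12\partial_t(\operatorname{div}\bm\phi)$, as desired.

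There is no real obstacle here — the whole argument is elementary algebra and chain rule — so my only concern would be bookkeeping: ensuring signs in the definition of $\mathbf t$ and in the $\operatorname{curl}$/$\operatorname{rot}$ conventions match those in Section 2.2 of the paper. I would therefore present the computation in two clearly separated blocks (one for the trace identities \eqref{symcurlv:2}, one for the divergence identity \eqref{symcurlv:3}), stating intermediate expressions using $n_1,n_2$ rather than $\mathbf n,\mathbf t$ to avoid ambiguity.
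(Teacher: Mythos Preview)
Your proposal is correct: the brute-force coordinate expansion you outline does verify all three identities, and your handling of the factor $\tfrac12$ and the sign conventions is accurate.

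The paper, however, takes a more structural route that avoids coordinate expansion almost entirely. For \eqref{symcurlv:2} it uses the scalar identities $(\operatorname{curl} q)\cdot\mathbf n=\partial_t q$ and $(\operatorname{curl} q)\cdot\mathbf t=-\partial_n q$ together with $\mathbf n^{\mathrm T}\operatorname{curl}\bm\phi=\operatorname{curl}(\bm\phi\cdot\mathbf n)$, $\mathbf t^{\mathrm T}\operatorname{curl}\bm\phi=\operatorname{curl}(\bm\phi\cdot\mathbf t)$, and $(\operatorname{curl}\bm\phi)^{\mathrm T}\mathbf n=\operatorname{curl}(\bm\phi\cdot\mathbf n)$; the symmetrization then falls out immediately. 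For \eqref{symcurlv:3} it invokes $\operatorname{div}(\operatorname{curl}\bm\phi)=0$ and $\operatorname{div}((\operatorname{curl}\bm\phi)^{\mathrm T})=\operatorname{curl}(\operatorname{div}\bm\phi)$, so that $\operatorname{div}(\operatorname{sym}\operatorname{curl}\bm\phi)=\tfrac12\operatorname{curl}(\operatorname{div}\bm\phi)$, and the scalar identity again gives the tangential derivative. Your direct computation in fact rediscovers this last relation in component form. The paper's argument is shorter and makes the geometric content clearer (everything reduces to ``$\operatorname{curl}$ composed with normal trace is tangential derivative''), while your approach has the virtue of being entirely self-contained and leaving no doubt about conventions.
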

\begin{proof}
For any scalar function $q\in C^1(\Omega)$, there holds $(\operatorname{curl} q)\cdot \mathbf n = \partial_t q$.
This and $\mathbf n^{\mathrm T}(\operatorname{curl}\bm \phi)=\operatorname{curl}(\bm \phi\cdot\mathbf n)$ shows that $ \mathbf n^{\mathrm T} (\operatorname{curl}\bm \phi)\mathbf n =\partial_t(\bm \phi\cdot \mathbf n)$. This proves the first identity of \eqref{symcurlv:2}. Besides, $\mathbf t^{\mathrm T}\operatorname{curl}\bm \phi= \operatorname{curl}(\bm \phi\cdot \mathbf t)$ gives rise to $ \mathbf t^{\mathrm T} (\operatorname{curl}\bm \phi)\mathbf n =\partial_t(\bm \phi\cdot \mathbf t)$. The relation $(\operatorname{curl} q)\cdot\mathbf t = -\partial_n q$ combined with $\left(\operatorname{curl}\bm \phi\right)^{\mathrm T}\mathbf n = \operatorname{curl}(\bm \phi\cdot \mathbf n)$ leads to $\mathbf t^{\mathrm T} \left(\operatorname{curl}\bm \phi\right)^{\mathrm T}\mathbf n = -\partial_n(\bm \phi\cdot \mathbf n)$. Thus the second identity of \eqref{symcurlv:2} follows. Furthermore, owing to $\operatorname{div}(\operatorname{curl}\bm \phi)=0$ and $\operatorname{div}\left((\operatorname{curl}\bm \phi)^{\mathrm T}\right)=\operatorname{curl}(\operatorname{div}\bm \phi)$, there holds
\begin{equation*}
\mathbf n^{\mathrm T}\operatorname{div}(\operatorname{sym}\operatorname{curl}\bm \phi) = \frac{1}{2}\mathbf n^{\mathrm T}\operatorname{div}\left((\operatorname{curl}\bm \phi)^{\mathrm T}\right) = \frac{1}{2}\mathbf n^{\mathrm T} \operatorname{curl}(\operatorname{div}\bm \phi)= \frac{1}{2}\partial_t\left(\operatorname{div}\bm \phi\right).
\end{equation*}
\end{proof}

\subsection{The Kirchhoff–Love
plate bending model problem}
The Kirchhoff–Love bending
plate is clamped on the part $\Gamma_C\subset \partial\Omega$, simply supported on the part $\Gamma_S\subset \partial\Omega$, and free on the open part $\Gamma_F\subset \partial\Omega$. 
Specifically, given the transversal load $f\in L^2(\Omega)$, the equilibrium equation of the deflection $w$ reads
\begin{equation}\label{equ:0}
    \operatorname{div}\operatorname{div}\mathbb C\nabla^2 w = f \quad\text{in}~~\Omega,
\end{equation}
with the following mixed boundary conditions for the given data $w_{b}\in H^{2}(\Omega)\cap H^2(\Gamma_C\cup\Gamma_S)$, $g_{b}\in H^{1}(\Gamma_C)$, $m_b\in  H^1(\Gamma_C\cup \Gamma_S)$, $h_b\in L^2(\Gamma_F)$, and the point value $p_{\mathbf x}$ for all $\mathbf x\in \mathcal V_F$:
\begin{equation}\label{equ:1}
    \begin{aligned}
        &w = w_{b}, \,\,  \,\, \partial_n w = g_{b} &&\quad\text{on}~~\Gamma_C,\\
        &w = w_{b}, \,\,  \,\, \mathbf n^{\mathrm T}\bm \sigma\mathbf n = m_{b} &&\quad\text{on}~~\Gamma_S,\\
        &\mathbf n^{\mathrm T}\bm \sigma\mathbf n=m_b,\, \, \,\,  \partial_t(
       \mathbf t^{\mathrm T}\bm \sigma\mathbf n)+\mathbf n^{\mathrm T}\operatorname{div}\bm \sigma= h_b &&\quad\text{on}~~\Gamma_F,\\
        &[\![\mathbf t^{\mathrm T} \bm \sigma \mathbf n]\!]_{\mathbf x}=p_{\mathbf x} &&\quad \text{for} \, \,\mathbf x \in \mathcal V_F.
    \end{aligned}
\end{equation}
Here $\mathcal V_F$ denotes the set of all interior corner points on $\Gamma_F$, and for any $\mathbf x \in \mathcal V_F$,  two edges $e_{\pm}$ form the boundary angle at $\mathbf x$. Throughout this and the subsequent discussion, the jump at corner points $\mathbf x$ is defined as follows:
\[[\![\mathbf t^{\mathrm T} \bm \sigma \mathbf n]\!]_{\mathbf x}:=(\mathbf t^{\mathrm T} \bm \sigma(\mathbf x) \mathbf n)|_{e_+}-(\mathbf t^{\mathrm T}\bm \sigma(\mathbf x)\mathbf n)|_{e_-}.\]
The given data satisfies the compatibility condition $[\![g_b\mathbf n+\partial_t w_b\mathbf t]\!]_{\mathbf x}=0$ for $\mathbf x\in \mathcal V_C$ with $\mathcal V_C$ being the set of all interior corner points on $\Gamma_C$.
Besides, $\mathbb C: L^2(\Omega;\mathbb S)\rightarrow L^2(\Omega;\mathbb S)$ is a symmetric positive definite isomorphism, given by 
\begin{equation}\label{MT}
   \mathbb C \nabla^2 w= \mathsf D((1-\nu)\varepsilon(\nabla w)+\nu\operatorname{div}(\nabla w)\bm I),
\end{equation}
where $\varepsilon:=\frac{1}{2}(\nabla+\nabla^{\mathrm T})$, $\bm I$ denotes the identity, and $\mathsf D = \frac{\mathsf E t^3}{12(1-\nu^2)}$
represents the bending rigidity with the Young modulus $\mathsf E$, the plate thickness $t$, and the Poisson ratio $\nu$. The bending moment tensor $\bm \sigma$ is given by $\bm \sigma = \mathbb C \nabla^2 w$.

Note that $\mathbb C^{-1}$ is a symmetric positive definite isomorphism. For $\bm \tau\in L^2(\Omega;\mathbb S)$, the weighted $L^2$-inner product norm is defined by
 \begin{equation*}
    \|\bm \tau\|^2_{\mathbb C^{-1}}:=(\mathbb C^{-1}\bm \tau, \bm \tau).
\end{equation*}
The positive definiteness of $\mathbb C^{-1}$ and $\mathbb C$ gives rise to
  \begin{equation}\label{C-norm:equ}
    \|\mathbb C^{-1}\bm \tau\|^2_0\lesssim \|\bm \tau\|_{\mathbb C^{-1}}^2\lesssim\|\bm \tau\|_0^2\lesssim \|\bm \tau\|_{\mathbb C^{-1}}^2 \lesssim\|\mathbb C^{-1}\bm \tau\|^2_0.
 \end{equation}

Define 
\begin{equation}
    \Lambda = \{v\in H^2(\Omega): v|_{\Gamma_C\cup \Gamma_S} = 0, \, \partial_n v|_{\Gamma_C}=0\}.
\end{equation}
The primal formulation of \eqref{equ:0}--\eqref{equ:1} reads: Find $w\in H^2(\Omega)$ satisfying $w|_{\Gamma_C\cup \Gamma_S}=w_b$ and $\partial_n w|_{\Gamma_S}=g_b$ such that
\begin{equation}
    \label{primal:formulation}
    \begin{aligned}
         (\mathbb C\nabla^2 w, \nabla^2 v) = (f,v)
         +\operatorname{R}_b(v)\quad \text{for all}~~v\in \Lambda.
    \end{aligned}
\end{equation}
Here 
\begin{equation}\label{rb:def}
    \operatorname{R}_{b}(v): = \langle m_b,\partial_n v \rangle_{\Gamma_S\cup \Gamma_F}-\langle h_b, v\rangle_{\Gamma_F}+\sum_{\mathbf x\in\mathcal V_F} p_{\mathbf x}v (\mathbf x).
\end{equation}

\section{Mixed finite element methods and divdiv complexes with boundary conditions}
This section presents the mixed formulation and the discretization for the Kirchhoff-Love plate bending problem with mixed boundary conditions. Besides, this section establishes the divdiv complexes with boundary conditions at both continuous and discrete levels, which play a crucial role in the subsequent a posteriori error estimation and optimality analysis.

\subsection{The mixed finite element method}
Introducing the bending moment $\bm \sigma$, one can reformulate the equilibrium equation of system \eqref{equ:0} into
\begin{equation}\label{equ:0:sigma}
    \begin{aligned}
        &\mathbb C^{-1}\bm \sigma =  \nabla^2 w, \, \, \operatorname{div}\operatorname{div}\bm \sigma = f &&\quad\text{in}~~\Omega.\\
    \end{aligned}
\end{equation}
To present the mixed formulation, the continuous spaces with homogeneous and non-homogeneous boundary conditions are defined as follows:
 \begin{align}
    &\Sigma_0: = \{\bm \tau \in H(\operatorname{div}\operatorname{div},\Omega;\mathbb S): (\bm \tau,\nabla^2 v)-(\operatorname{div}\operatorname{div}\bm \tau, v)=0\, \,  \quad\text{for all}~~v\in \Lambda\},\label{sigma:0:def}\\
    &\Sigma_b: = \{\bm \tau \in H(\operatorname{div}\operatorname{div},\Omega;\mathbb S):(\bm \tau,\nabla^2 v)-(\operatorname{div}\operatorname{div}\bm \tau, v)= \operatorname{R}_b(v) \quad\text{for all}~~v\in \Lambda \}. \label{sigma:b:def}
\end{align}

The mixed formulation for the fourth-order problem \eqref{equ:0:sigma} with the mixed boundary conditions \eqref{equ:1} is to find $(\bm \sigma, w)\in \Sigma_b\times L^2(\Omega)$ such that 
\begin{equation}\label{mixed:bd:con}
 \begin{aligned}
    (\mathbb C^{-1}\bm \sigma, \bm \tau) -(\operatorname{div}\operatorname{div}\bm \tau, w) &= \operatorname{tr}_{b}(u)(\bm \tau)\quad&&\text{for all}~~\bm \tau \in \Sigma_0,\\
    (\operatorname{div}\operatorname{div}\bm \sigma, v) &= (f,v)\quad&& \text{for all}~~v\in L^2(\Omega),
\end{aligned}   
\end{equation}
where 
\begin{equation}\label{trace:gammaD}
  \operatorname{tr}_{b}(u)(\bm \tau):=(\bm \tau, \nabla^2 u) -(\operatorname{div}\operatorname{div}\bm \tau, u)
\end{equation}
with some $u\in H^2(\Omega)$ and $u|_{\Gamma_C\cup \Gamma_S} = w_{b}$, $\partial_n u|_{\Gamma_C}=g_{b}$.
In reality, for  $\bm \tau \in C^1(\overline{\Omega};\mathbb S)\cap \Sigma_0$, with the compatibility condition $[\![g_b\mathbf n+\partial_t w_b\mathbf t]\!]_{\mathbf x}=0$ for all $\mathbf x\in \mathcal V_C$, one can reformulate the expression \eqref{trace:gammaD} into 
\begin{equation}\label{trace:gammaD:C1}
  \operatorname{tr}_{b}(u)(\bm \tau):=\langle\mathbf n^{\mathrm T}\bm \tau\mathbf n, g_b\rangle_{\Gamma_C}+\langle\mathbf t^{\mathrm T}\bm \tau\mathbf n, \partial_t w_b\rangle_{\Gamma_C\cup \Gamma_S} - \langle \mathbf n^{\mathrm T} \operatorname{div}\bm \tau, w_{b}\rangle_{\Gamma_C\cup \Gamma_S}.  
\end{equation}

\begin{theorem}[The inf-sup condition]\label{inf:sup:con:bd}There  holds
   \begin{equation*}
       \sup_{\bm \tau \in \Sigma_0\atop \bm \tau \neq 0}\frac{(\operatorname{div}\operatorname{div}\bm \tau,v)}{\|\bm \tau\|_{H(\operatorname{div}\operatorname{div})}}\gtrsim \|v\|_0\quad \text{for all}~~v\in L^2(\Omega).
   \end{equation*}
\end{theorem}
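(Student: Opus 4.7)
The plan is to construct, for each $v\in L^2(\Omega)$, a test tensor $\bm \tau\in\Sigma_0$ with $\operatorname{div}\operatorname{div}\bm\tau=v$ and $\|\bm\tau\|_{H(\operatorname{div}\operatorname{div})}\lesssim\|v\|_0$; the inf-sup estimate then falls out by taking this $\bm\tau$ as the supremand and computing $(\operatorname{div}\operatorname{div}\bm\tau,v)=\|v\|_0^2$. The natural way to manufacture such a $\bm\tau$ is to set $\bm\tau:=\mathbb C\nabla^2 u$, where $u$ solves the primal biharmonic problem with load $v$.

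Concretely, the first step is to pose the auxiliary problem: find $u\in\Lambda$ such that $(\mathbb C\nabla^2 u,\nabla^2\phi)=(v,\phi)$ for all $\phi\in\Lambda$. Well-posedness reduces to coercivity of $(\mathbb C\nabla^2\cdot,\nabla^2\cdot)$ on $\Lambda$. From the closed form \eqref{MT}, a direct computation gives $(\mathbb C\nabla^2 u,\nabla^2 u)\geq \mathsf D(1-\nu)\|\nabla^2 u\|_0^2$, so it remains to upgrade to full $H^2$ control. This is achieved by a double Poincaré–Friedrichs argument: the trace condition $u|_{\Gamma_C\cup\Gamma_S}=0$ on a boundary portion of positive measure yields $\|u\|_0\lesssim\|\nabla u\|_0$, while on $\Gamma_C$ the conditions $u=0$ and $\partial_n u=0$ together imply $\nabla u=0$, so a second Poincaré inequality gives $\|\nabla u\|_0\lesssim\|\nabla^2 u\|_0$. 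Lax–Milgram then delivers a unique solution with $\|u\|_2\lesssim\|v\|_0$.

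The second step is to verify that $\bm\tau:=\mathbb C\nabla^2 u$ has the required properties. Boundedness of $\mathbb C$ together with the stability bound above gives $\|\bm\tau\|_0\lesssim\|v\|_0$. Since $C_c^\infty(\Omega)\subset\Lambda$, testing the primal equation against compactly supported smooth functions and integrating by parts twice yields $\operatorname{div}\operatorname{div}\bm\tau=v$ in the sense of distributions, hence in $L^2(\Omega)$. Feeding the distributional identity back into the variational equation on $\Lambda$ gives $(\bm\tau,\nabla^2\phi)-(\operatorname{div}\operatorname{div}\bm\tau,\phi)=(v,\phi)-(v,\phi)=0$ for every $\phi\in\Lambda$, so $\bm\tau\in\Sigma_0$ by definition \eqref{sigma:0:def}. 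Collecting the bounds,
\begin{equation*}
\sup_{\bm\tau\in\Sigma_0,\,\bm\tau\neq 0}\frac{(\operatorname{div}\operatorname{div}\bm\tau,v)}{\|\bm\tau\|_{H(\operatorname{div}\operatorname{div})}}\;\geq\;\frac{\|v\|_0^2}{\|\bm\tau\|_{H(\operatorname{div}\operatorname{div})}}\;\gtrsim\;\|v\|_0.
\end{equation*}

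The main obstacle I anticipate is the coercivity step on the mixed-boundary space $\Lambda$: the free boundary $\Gamma_F$ imposes no essential trace, and on $\Gamma_S$ only the function vanishes (not its normal derivative), so the argument hinges crucially on the standing assumption that $\Gamma_C$ has positive measure. If one wished to weaken that assumption, $\Sigma_0$ would acquire a nontrivial kernel associated with rigid motions of $\nabla u$ and the above strategy would require an orthogonality correction, but under the hypotheses made in the paper the double Poincaré chain goes through cleanly.
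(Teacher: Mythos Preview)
Your proof is correct and genuinely different from the paper's. You go the primal route: solve the homogeneous Kirchhoff--Love problem in $\Lambda$ with right-hand side $v$ via Lax--Milgram (coercivity through the double Poincar\'e--Friedrichs chain on $\Gamma_C$), and take $\bm\tau=\mathbb C\nabla^2 u$. The paper instead performs a two-step divergence lifting: first $v\in L^2(\Omega)$ is lifted to $\bm\phi\in H^1(\Omega;\mathbb R^2)$ with $\operatorname{div}\bm\phi=v$ and $\bm\phi\cdot\mathbf n|_{\Gamma_F}=0$, then a compactness/Korn argument (Lemma~\ref{step2:infsup}) lifts $\bm\phi$ to $\bm\tau^\ast\in H^1(\Omega;\mathbb S)$ with $\operatorname{div}\bm\tau^\ast=\bm\phi$ and explicit vanishing normal traces on $\Gamma_S\cup\Gamma_F$. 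Your argument is shorter and more transparent for the bare inf-sup statement. The paper's construction buys something extra that your $\bm\tau$ does not deliver: the resulting tensor lies in $\Sigma_0\cap H^1(\operatorname{div},\Omega;\mathbb S)$ with pointwise boundary conditions $\mathbf n^{\mathrm T}\bm\tau^\ast\mathbf n|_{\Gamma_S\cup\Gamma_F}=0$, $\mathbf t^{\mathrm T}\bm\tau^\ast\mathbf n|_{\Gamma_F}=0$, $\operatorname{div}\bm\tau^\ast\cdot\mathbf n|_{\Gamma_F}=0$, and this added regularity is invoked later in the proof of Lemma~\ref{onto:map} to apply the quasi-interpolation of \cite{hu2021family} and show $\operatorname{div}\operatorname{div}\Sigma_{h,0}=U_h$. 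Your $\bm\tau=\mathbb C\nabla^2 u$ is a priori only in $L^2(\Omega;\mathbb S)$, so it would not serve that purpose.
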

\noindent The proof of Theorem \ref{inf:sup:con:bd} is provided in Appendix \ref{sec:appendix}.


\begin{co}
 Assuming $f\in L^2(\Omega)$, the mixed formulation \eqref{mixed:bd:con} is fully equivalent to the primal formulation \eqref{primal:formulation}. 
\end{co}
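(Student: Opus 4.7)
The plan is to establish the two directions of the equivalence: \emph{Primal $\Rightarrow$ Mixed} by direct construction of $\bm\sigma$ from the deflection, and \emph{Mixed $\Rightarrow$ Primal} by combining uniqueness of the mixed problem (which relies on Theorem~\ref{inf:sup:con:bd}) with well-posedness of the primal problem. Throughout, the assumed compatibility of the boundary data will let us fix an extension $u\in H^2(\Omega)$ with $u|_{\Gamma_C\cup\Gamma_S}=w_b$ and $\partial_n u|_{\Gamma_C}=g_b$, so that $\operatorname{tr}_b(u)(\bm\tau)$ is a well-defined bounded linear functional on $\Sigma_0$.

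For the forward direction, I would start from a primal solution $w\in H^2(\Omega)$ satisfying the essential conditions, define $\bm\sigma:=\mathbb C\nabla^2 w\in L^2(\Omega;\mathbb S)$, and test \eqref{primal:formulation} first against $v\in C_c^\infty(\Omega)\subset\Lambda$ to obtain $\operatorname{div}\operatorname{div}\bm\sigma=f$ in the distributional sense, hence (since $f\in L^2$) in $L^2(\Omega)$. This proves $\bm\sigma\in H(\operatorname{div}\operatorname{div},\Omega;\mathbb S)$. Combining with \eqref{primal:formulation} for general $v\in\Lambda$ gives $(\bm\sigma,\nabla^2 v)-(\operatorname{div}\operatorname{div}\bm\sigma,v)=\operatorname{R}_b(v)$, so $\bm\sigma\in\Sigma_b$. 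The second equation of \eqref{mixed:bd:con} is then immediate. For the first equation, I would use that $w-u\in\Lambda$ and $\bm\tau\in\Sigma_0$ to conclude $(\bm\tau,\nabla^2(w-u))-(\operatorname{div}\operatorname{div}\bm\tau,w-u)=0$, and rearranging with $\mathbb C^{-1}\bm\sigma=\nabla^2 w$ yields exactly $(\mathbb C^{-1}\bm\sigma,\bm\tau)-(\operatorname{div}\operatorname{div}\bm\tau,w)=(\bm\tau,\nabla^2 u)-(\operatorname{div}\operatorname{div}\bm\tau,u)=\operatorname{tr}_b(u)(\bm\tau)$.

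For the reverse direction, the cleanest route is via uniqueness. If $(\bm\sigma_1,w_1)$ and $(\bm\sigma_2,w_2)$ both solve \eqref{mixed:bd:con}, the difference $(\bm\sigma,w)$ satisfies the analogous homogeneous system with $\bm\sigma\in\Sigma_0$. From the second equation $\operatorname{div}\operatorname{div}\bm\sigma=0$, and from the first, testing with $\bm\tau=\bm\sigma$ together with the coercivity bound \eqref{C-norm:equ} forces $\bm\sigma=0$. The first equation then collapses to $(\operatorname{div}\operatorname{div}\bm\tau,w)=0$ for every $\bm\tau\in\Sigma_0$, and Theorem~\ref{inf:sup:con:bd} yields $w=0$; hence uniqueness. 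On the other hand, the primal problem \eqref{primal:formulation} is well-posed by the Lax-Milgram lemma applied to the symmetric coercive form $(\mathbb C\nabla^2\cdot,\nabla^2\cdot)$ on $\Lambda$ (the positive measure of $\Gamma_C$ ensures $H^2$-coercivity through a Poincaré-type inequality). Call its unique solution $\hat w$; by the forward direction $(\mathbb C\nabla^2\hat w,\hat w)$ solves \eqref{mixed:bd:con}, and by uniqueness any solution $(\bm\sigma,w)$ of the mixed formulation satisfies $w=\hat w\in H^2(\Omega)$, $\bm\sigma=\mathbb C\nabla^2\hat w$, so $w$ solves the primal formulation. The main obstacle is the reverse direction, where a priori $w$ only lies in $L^2(\Omega)$; Theorem~\ref{inf:sup:con:bd} is exactly the tool that upgrades the weak identity to pointwise (distributional) information on $w$, allowing identification with the primal solution.
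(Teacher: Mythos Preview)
Your proposal is correct and follows essentially the same route as the paper: both establish the forward direction by defining $\bm\sigma=\mathbb C\nabla^2 w$, testing against $C_c^\infty(\Omega)$ to obtain $\operatorname{div}\operatorname{div}\bm\sigma=f\in L^2$, and then verifying membership in $\Sigma_b$ and the first mixed equation, while the reverse direction is handled in both cases by invoking uniqueness of the mixed problem together with well-posedness of the primal problem. The only cosmetic differences are that the paper simply chooses $u=w$ in the trace formula (exploiting that $\operatorname{tr}_b(u)$ is independent of the extension), whereas you use $w-u\in\Lambda$ explicitly, and you spell out the uniqueness argument via Theorem~\ref{inf:sup:con:bd}, which the paper merely asserts.
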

    \begin{proof}
       Both problems \eqref{primal:formulation} and \eqref{mixed:bd:con} are uniquely solvable. Therefore, it sufﬁces to show that $(\bm \sigma, w)$ with $\bm \sigma=\mathbb C\nabla^2 w\in \Sigma_b$ solves \eqref{mixed:bd:con}, if $w\in H^2(\Omega)$ satisfying $w|_{\Gamma_C\cup \Gamma_S}=w_b$ and $\partial_n w|_{\Gamma_S}=g_b$ solves \eqref{primal:formulation}. Assume that $w\in H^2(\Omega)$ satisfies $w|_{\Gamma_C\cup \Gamma_S}=w_b$ and $\partial_n w|_{\Gamma_S}=g_b$ and solves \eqref{primal:formulation}. Then $\bm \sigma = \mathbb C\nabla^2 w\in L^2(\Omega;\mathbb S)$ and
       \begin{equation}\label{w:form:1}
           (\bm \sigma, \nabla^2 v)= (f,v)+\operatorname{R}_b(v)\quad \text{for all}\, \, v\in \Lambda.
       \end{equation}
       Since $C_0^\infty(\Omega)\subset \Lambda$, an integration by parts leads to
       \begin{equation*}
           (\operatorname{div}\operatorname{div}\bm \sigma, v) = (f,v)\quad \text{for all}\, \, v\in C_0^\infty(\Omega).
       \end{equation*}
       This shows that $\operatorname{div}\operatorname{div}\bm \sigma = f\in L^2(\Omega)$. Therefore, there holds
       \begin{equation}\label{2nd:row:1}
           (\operatorname{div}\operatorname{div}\bm \sigma, v) = (f, v)\quad \text{for all}\, \, v\in L^2(\Omega),
       \end{equation}
      and the second row of \eqref{mixed:bd:con} follows. In addition, for any $v\in \Lambda$, a subtraction of \eqref{2nd:row:1} from \eqref{w:form:1} results in 
      \begin{equation*}
          (\bm \sigma,\nabla^2 v)-(\operatorname{div}\operatorname{div}\bm \sigma,v) =  \operatorname{R}_b(v).
      \end{equation*}
      This proves $\bm \sigma\in \Sigma_b$. Furthermore, given any function $\bm \tau\in \Sigma_0$, thanks to $(\bm \tau, \nabla^2 w)-(\operatorname{div}\operatorname{div}\bm \tau, w)=\operatorname{tr}_b(w)(\bm \tau)$, a choice of $u=w\in H^2(\Omega)$ with $w|_{\Gamma_C\cup \Gamma_S}= w_b$ and $\partial_n w|_{\Gamma_C} = g_b$ leads to 
      \begin{equation}
          (\mathbb C^{-1}\bm \sigma, \bm \tau)-(\operatorname{div}\operatorname{div}\bm \tau, w) = \operatorname{tr}_b(u)(\bm \tau)\quad \text{for all}\, \, \bm \tau\in \Sigma_0.
      \end{equation}
       This concludes the first row of \eqref{mixed:bd:con}.
    \end{proof}

This paper adopts the mixed finite element method from \cite{hu2021family} to discretize the mixed formulation \eqref{mixed:bd:con} subject to mixed boundary conditions. For $k\geq 3$, let $\Sigma_{h}$ be the $H(\operatorname{div}\operatorname{div}, \Omega; \mathbb{S})$ conforming finite element space proposed in \cite{hu2021family}:
\begin{equation*}
\begin{aligned}
\Sigma_{h}:=\{\bta\in H(\operatorname{div}\operatorname{div}, \Omega; \mathbb{S}): \bta|_{K}\in P_{k}(K;\ms)\, \text{for all}\, K\in \mathcal{T}_{h},\\
\bm \tau (\mathbf x)\, \, \text{is continuous}\, \, \text{at all}\,\mathbf x \in \mathcal V_h,\\
[\![\bta\mathbf{n}_e]\!]_{e}=\mathbf{0}\, \text{and}\, [\![\mathbf{n}_e^{{\mathrm T}}\operatorname{div}\bta]\!]_{e}=0\,\text{for all}\,e\in \mathcal{E}_{h}(\Omega) \}.
\end{aligned}
\end{equation*}

Define a subspace of $\Sigma_0$ in \eqref{sigma:0:def} by
\begin{equation}
\begin{aligned}
     \Sigma_{h,0}: = \{\bm \tau\in \Sigma_h: \, \mathbf n^{\mathrm T}\bm \tau\mathbf n|_{\Gamma_S\cup \Gamma_F} = 0, \mathbf n^{\mathrm T}\operatorname{div}\bm \tau|_{\Gamma_F} = -\partial_t(\mathbf t^{\mathrm T}\bm \tau\mathbf n)|_{\Gamma_F},\\
     [\![\mathbf t^{\mathrm T} \bm \tau \mathbf n]\!]_{\mathbf x}=0 \quad \text{for all} \, \,\mathbf x \in \mathcal V_F\}. 
\end{aligned}  
\end{equation}
 Given any edge $e\in \mathcal E_h(\Gamma_S\cup \Gamma_F)$ with two endpoints $\mathbf x_0$ and $\mathbf x_1$, define $\pi_{e}: C^0(e) \rightarrow P_k(e)$ as follows:
\begin{equation}\label{mb:inter}
    \begin{aligned}
    \pi_{e} m_b(\mathbf x_i) &= m_b(\mathbf x_i)~~&&\text{for}~~i = 0,1,\\
    \langle\pi_{e} m_b,\bm q\rangle_e &= \langle m_b, q\rangle_e~~&&\text{for all}~~q\in P_{k-2}(e).
\end{aligned}
\end{equation}
Let $\pi_{h}m_b|_e = \pi_e m_b$ for all  $e\in \mathcal E_h(\Gamma_S\cup \Gamma_F)$. In addition, let $\mathcal P^{k-1}_e$ represent the $L^2$ projection operator onto $P_{k-1}(e)$, and $\mathcal P^{k-1}_h h_b|_e  =\mathcal P^{k-1}_e h_b$ for all  $e\in \mathcal E_h(\Gamma_F)$. The discrete space with non-homogeneous boundary conditions is defined by: 
\begin{equation}
\begin{aligned}
    \Sigma_{h,b}: = &\{\bm \tau\in \Sigma_h: \, \mathbf n^{\mathrm T}\bm \tau\mathbf n|_{\Gamma_S\cup \Gamma_F} = \pi_h m_b, \\
     &\mathbf n^{\mathrm T}\operatorname{div}\bm \tau|_{\Gamma_F} = -\partial_t(\mathbf t^{\mathrm T}\bm \tau\mathbf n)|_{\Gamma_F}+\mathcal P^{k-1}_h h_b,~~
    [\![\mathbf t^{\mathrm T} \bm \tau\mathbf n]\!]_{\mathbf x}=p_{\mathbf x}~~\text{for all} \, \,\mathbf x \in \mathcal V_F\}.
    \end{aligned}  
\end{equation}
\begin{remark}\label{b:d:sigmahb}
Introduce
\begin{equation}\label{rhb:def}
    R_{h,b}(v):= \langle\pi_h m_b, \partial_n v\rangle_{\Gamma_S\cup \Gamma_F}-\langle\mathcal P_h^{k-1}h_b, v\rangle_{\Gamma_F}+\sum_{\mathbf x\in \mathcal V_F}p_{\mathbf x}v(\mathbf x)\quad \text{for all}~~v\in \Lambda.
\end{equation}
Indeed, for any $\bm \tau \in \Sigma_{h,b}$, the boundary conditions of $\bm \tau$ are equivalent to 
\begin{align*}
    (\bm \tau, \nabla^2 v)-(\operatorname{div}\operatorname{div}\bm \tau,v) = R_{h,b}(v)\, \,  \quad\text{for all}~~v\in \Lambda.
\end{align*}
\end{remark}
Define
\begin{equation*}
U_{h}:=\{v\in L^{2}(\Omega):\, v|_{K}\in P_{k-2}(K)\, \text{for all}\, K\in \mathcal{T}_{h}\}.
\end{equation*}

The mixed finite element method is to find 
$(\bm \sigma_h, w_h)\in \Sigma_{h,b}\times U_h$ such that 
\begin{equation}\label{mixed:bd:dis}
 \begin{aligned}
    (\mathbb C^{-1}\bm \sigma_h, \bm \tau_h) -(\operatorname{div}\operatorname{div}\bm \tau_h, w_h) &= \operatorname{tr}_{b}(u)(\bm \tau_h)\quad&&\text{for all}~~\bm \tau_h \in \Sigma_{h,0},\\
    (\operatorname{div}\operatorname{div}\bm \sigma_h, v_h) &= (f,v_h)\quad &&\text{for all}~~v_h\in U_h.
\end{aligned}   
\end{equation}

For simplicity, let $\mathcal Q_h$ denote the $L^2$ projection onto $U_h$. More generally, let $\mathcal Q_h^{k}$ be the $L^2$ projection onto the space of piecewise $P_{k}$ polynomials. Recall that $\mathcal P_h^k$ denotes the $L^2$ projection onto the space of piecewise $P_k$ polynomials defined on the edges of the mesh. These notations will be frequently used throughout this paper.

\subsection{Continuous divdiv complex with boundary conditions}
This subsection establishes the continuous divdiv complex with boundary conditions. The key aspect is to identify appropriate boundary conditions for the $H^1$ vector-valued space within the divdiv complex. It is shown that a specific and unique boundary condition ensures the exactness of both the continuous and discrete complexes.

For the continuous divdiv complex 
\begin{equation}\label{divdivComp2d}
\begin{aligned}
RT\stackrel{\subset}{\rightarrow}H^1(\Omega;\mathbb{R}^2)  \stackrel{\operatorname{sym}\operatorname{curl}}{\longrightarrow}H(\operatorname{div}\operatorname{div},\Omega;\mathbb{S})\stackrel{\operatorname{div}\operatorname{div}}{\longrightarrow}L^2(\Omega;\mathbb{R})\stackrel{}{\rightarrow}0,
\end{aligned}
\end{equation}
a discrete sub-complex consisting of $\Sigma_h$ and $U_h$ has been presented in \cite{hu2021family}, which reads
\begin{equation}\label{eq:DisComp}
R T \stackrel{\subset}{\longrightarrow} V_{h} \stackrel{\operatorname{sym}\operatorname{curl}}{\longrightarrow} \Sigma_{h} \stackrel{\operatorname{div}\operatorname{div}}{\longrightarrow}U_{h} \rightarrow 0,
\end{equation}
where $$RT:=\{a\mathbf{x}+\bm{b}:\, a\in\mathbb{R}, \bm{b}\in \mathbb{R}^{2}\}$$ with ${\rm{dim}}\, RT=3$ is the shape function space of the lowest-order Raviart-Thomas element \cite{bbf}, and 
\begin{equation}\label{vh:def}
\begin{aligned}
    V_h:= \{\bm \phi \in H^1(\operatorname{div},\Omega;\mathbb R^2): &\bm \phi|_{K}\in P_{k+1}(K;\mathbb R^2),\\ &\nabla \bm \phi (\mathbf x)~~\text{is continuous for all}\, \,\mathbf x\in \mathcal V_h\}
\end{aligned}   
\end{equation}
with $H^1(\operatorname{div},\Omega;\mathbb R^2):=\{\bm \phi \in H^1(\Omega;\mathbb R^2): \operatorname{div}\bm \phi\in  H^1(\Omega)\}$. More specifically, the degrees of freedom of $V_h$ given in \cite[(2.16--2.20)]{hu2021family} are listed below for later use:
\begin{align}
    &\bm \phi(a), \nabla\bm \phi(a)\quad  &&\text{for all}\, \, a\in \mathcal V_h, \label{v:dof:1}\\
     &\langle\bm \phi, \bm q\rangle_e\quad  &&\text{for all}\, \, \bm q\in P_{k-3}(e;\mathbb R^2), e\in\mathcal E_h,\label{v:dof:2}\\
     &\langle\operatorname{div}\bm \phi, q\rangle_e\quad &&\text{for all}\, \, q\in P_{k-2}(e), e\in \mathcal E_h,\label{v:dof:3}\\
     &(\bm \phi, \bm q)_K\quad &&\text{for all}\, \, \bm q\in RT^\perp_{k-4},\label{v:dof:4}
\end{align}
where $RT^\perp_{k-4}$ is the $(k-4)$-order rotated Raviart-Thomas element space \cite{bbf}. The interior degrees of freedom \eqref{v:dof:4} are adapted from those in \cite[(2.19--2.20)]{hu2021family}.
\begin{lemma}[\cite{arnold2021complexes,hu2021family}]\label{exact:divdiv}
The complexes \eqref{divdivComp2d} and \eqref{eq:DisComp} are exact on a contractible domain. 
\end{lemma}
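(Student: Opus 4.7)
My plan is to verify, at both the continuous and discrete levels, the five properties that constitute exactness of each complex: the leftmost inclusion, the composition identity $\operatorname{div}\operatorname{div}\circ\operatorname{sym}\operatorname{curl}=0$, the identification $\ker(\operatorname{sym}\operatorname{curl}) = RT$, surjectivity of the right-most map, and middle exactness $\ker(\operatorname{div}\operatorname{div}) = \operatorname{ran}(\operatorname{sym}\operatorname{curl})$. Since the result is already in the literature, I would treat the role of the proof as organising the known ingredients rather than inventing them: \cite{arnold2021complexes} supplies the BGG-type construction underlying continuous exactness, and \cite{hu2021family} supplies the discrete dimension count.

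The elementary ingredients are handled first. The composition identity is the vector version of \eqref{symcurlv:3}: the argument in the preceding lemma shows $\mathbf n^{\mathrm T}\operatorname{div}(\operatorname{sym}\operatorname{curl}\bm \phi) = \tfrac{1}{2}\mathbf n^{\mathrm T}\operatorname{curl}(\operatorname{div}\bm \phi)$ for every $\mathbf n$, hence $\operatorname{div}(\operatorname{sym}\operatorname{curl}\bm \phi) = \tfrac{1}{2}\operatorname{curl}(\operatorname{div}\bm \phi)$, and applying $\operatorname{div}$ once more annihilates the curl. The kernel condition $\operatorname{sym}\operatorname{curl}\bm \phi = 0$ forces $\partial_y\phi_1 = \partial_x\phi_2 = 0$ and $\partial_y\phi_2 = \partial_x\phi_1$, so $\phi_1 = ax+b_1$ and $\phi_2 = ay+b_2$, identifying the kernel with $RT$; these arguments apply verbatim to $V_h \subset H^1(\Omega;\mathbb R^2)$. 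Surjectivity of $\operatorname{div}\operatorname{div}$ onto $L^2(\Omega)$ is obtained by solving the biharmonic problem $\Delta^2 w = f$ with $w \in H_0^2(\Omega)$ and setting $\bm \tau = \nabla^2 w$.

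The principal obstacle is middle exactness at the continuous level: given $\bm \tau \in H(\operatorname{div}\operatorname{div},\Omega;\mathbb S)$ with $\operatorname{div}\operatorname{div}\bm \tau = 0$, one must produce $\bm \phi \in H^1(\Omega;\mathbb R^2)$ with $\operatorname{sym}\operatorname{curl}\bm \phi = \bm \tau$. Here I would appeal to the construction of \cite{arnold2021complexes}, which realises the divdiv complex as a reduction of two copies of the de Rham complex via algebraic projections, so that exactness of de Rham on a contractible domain transfers to the divdiv complex. For the discrete complex, once $\operatorname{sym}\operatorname{curl}(V_h) \subset \Sigma_h$ and $\operatorname{div}\operatorname{div}(\Sigma_h) \subset U_h$ are checked against the degrees of freedom \eqref{v:dof:1}--\eqref{v:dof:4} (vertex continuity of $\nabla\bm \phi$ produces vertex continuity of $\operatorname{sym}\operatorname{curl}\bm \phi$, and elementwise $\operatorname{div}\operatorname{div}\bm \tau|_K \in P_{k-2}(K)$), the remaining middle exactness together with discrete surjectivity of $\operatorname{div}\operatorname{div}$ reduces to the dimension identity
\begin{equation*}
\dim \Sigma_h = (\dim V_h - 3) + \dim U_h,
\end{equation*}
which is supplied by the explicit shape-function count in \cite{hu2021family}.
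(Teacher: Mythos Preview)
The paper gives no proof of this lemma; it is stated purely as a citation of \cite{arnold2021complexes,hu2021family} and used as a black box. Your sketch is consistent with those references and supplies more detail than the paper itself does.

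One imprecision worth flagging: the dimension identity $\dim\Sigma_h = (\dim V_h - 3) + \dim U_h$ together with $\operatorname{div}\operatorname{div}\circ\operatorname{sym}\operatorname{curl}=0$ and $\ker(\operatorname{sym}\operatorname{curl})=RT$ does \emph{not} by itself yield both discrete surjectivity of $\operatorname{div}\operatorname{div}$ and middle exactness. From $\operatorname{sym}\operatorname{curl}V_h\subset\ker(\operatorname{div}\operatorname{div}|_{\Sigma_h})$ you only get $\dim\ker(\operatorname{div}\operatorname{div}|_{\Sigma_h})\ge\dim V_h-3$, hence $\dim\operatorname{div}\operatorname{div}(\Sigma_h)\le\dim U_h$; the inequality could in principle be strict at both places simultaneously while the Euler characteristic still vanishes. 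In \cite{hu2021family} surjectivity $\operatorname{div}\operatorname{div}\Sigma_h=U_h$ is established independently via the commuting quasi-interpolation operator (their Theorem~3.1), and only then does the dimension count force $\operatorname{sym}\operatorname{curl}V_h=\ker(\operatorname{div}\operatorname{div}|_{\Sigma_h})$. Your outline should make that dependence explicit.
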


To introduce the divdiv complex with boundary conditions, some notations and conventions are presented. Let $E\subset \partial\Omega$ represent a straight segment of the boundary (with no corner points in its interior). For convenience, assume that each such segment $E$ is subject to at most one type of boundary condition. Let $\Gamma_{F_j}$ denote the $j$-th connected component of $\Gamma_F$ with $1\leq j\leq J$.

The $H^1$ vector-valued function space with particular boundary conditions is given by
\begin{align*}
    V_0:=\{&\bm \phi\in H^1(\Omega;\mathbb R^2): \,  \bm \phi\cdot \mathbf n|_{E}\,\,  \text{is constant for each straight segment}\, \, E\subset \Gamma_S,\\
    &\bm \phi|_{\Gamma_{F_j}}=\bm r_j|_{\Gamma_{F_j}}\, \, \text{and}\,\,   \bm r_j\in RT\, \, \text{for each connected component}\, \,\Gamma_{F_j}\subset\Gamma_F, 2\leq j\leq J,\\
    &\bm \phi|_{\Gamma_{F_1}}=\bm 0\}.
\end{align*}

\begin{lemma}\label{leftarrow}
    For any $\bm \tau\in \Sigma_{0}$ satisfying $\operatorname{div}\operatorname{div}\bm \tau=0$, there exists $\bm \phi\in V_0$ such that $\operatorname{sym}\operatorname{curl}\bm \phi = \bm \tau$ and $|\bm \phi|_1\lesssim \|\bm \tau\|_0$.
\end{lemma}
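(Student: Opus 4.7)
The plan is to start from the exactness of the free divdiv complex in Lemma \ref{exact:divdiv} to obtain a preliminary potential $\tilde{\bm \phi}\in H^1(\Omega;\mathbb R^2)$ with $\operatorname{sym}\operatorname{curl}\tilde{\bm \phi}=\bm \tau$ and $|\tilde{\bm \phi}|_1\lesssim \|\bm \tau\|_0$ (the existence of a continuous right inverse follows from the closed range theorem applied to $\operatorname{sym}\operatorname{curl}:H^1(\Omega;\mathbb R^2)/RT\to \ker\operatorname{div}\operatorname{div}$). Such a $\tilde{\bm \phi}$ generally fails to lie in $V_0$; the goal is to correct it by subtracting a single element $\tilde{\bm r}\in RT$, which leaves $\operatorname{sym}\operatorname{curl}$ unchanged since $RT\subset\ker\operatorname{sym}\operatorname{curl}$.

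The second step is to read off boundary information on $\tilde{\bm \phi}$ from $\bm \tau\in\Sigma_0$. For sufficiently smooth $\bm \tau$, integration by parts against test functions in $\Lambda$ shows that $\bm \tau\in\Sigma_0$ is equivalent to $\mathbf n^{\mathrm T}\bm \tau\mathbf n=0$ on $\Gamma_S\cup\Gamma_F$, $\partial_t(\mathbf t^{\mathrm T}\bm \tau\mathbf n)+\mathbf n^{\mathrm T}\operatorname{div}\bm \tau=0$ on $\Gamma_F$, and $[\![\mathbf t^{\mathrm T}\bm \tau\mathbf n]\!]_{\mathbf x}=0$ at every $\mathbf x\in\mathcal V_F$. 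Substituting $\bm \tau=\operatorname{sym}\operatorname{curl}\tilde{\bm \phi}$ into \eqref{symcurlv:2}--\eqref{symcurlv:3} turns the first condition into $\partial_t(\tilde{\bm \phi}\cdot\mathbf n)=0$ on each straight segment of $\Gamma_S\cup\Gamma_F$, so $\tilde{\bm \phi}\cdot\mathbf n$ is constant there; the second reduces (after subtracting the already-known vanishing of $\mathbf n^{\mathrm T}\bm \tau\mathbf n$) to $\partial_{tt}(\tilde{\bm \phi}\cdot\mathbf t)=0$ on each straight edge of $\Gamma_F$, so $\tilde{\bm \phi}\cdot\mathbf t$ is affine there.

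For the vertex condition at $\mathbf x\in\mathcal V_F$, note that $\mathbf t^{\mathrm T}\bm \tau\mathbf n=\tfrac12(\partial_t(\tilde{\bm \phi}\cdot\mathbf t)-\partial_n(\tilde{\bm \phi}\cdot\mathbf n))$, and since $\mathbf t\mathbf t^{\mathrm T}+\mathbf n\mathbf n^{\mathrm T}=\bm I$ for both frames at $\mathbf x$, the trace $\operatorname{tr}(\nabla\tilde{\bm \phi})$ is basis-independent and gives $[\![\partial_t(\tilde{\bm \phi}\cdot\mathbf t)]\!]_{\mathbf x}=-[\![\partial_n(\tilde{\bm \phi}\cdot\mathbf n)]\!]_{\mathbf x}$. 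Together with the jump condition $[\![\partial_t(\tilde{\bm \phi}\cdot\mathbf t)-\partial_n(\tilde{\bm \phi}\cdot\mathbf n)]\!]_{\mathbf x}=0$ this forces $[\![\partial_t(\tilde{\bm \phi}\cdot\mathbf t)]\!]_{\mathbf x}=0$, i.e., the tangential slopes match across corners of $\Gamma_F$. These edge-wise conditions (constant normal, affine tangential) combined with the vertex continuity of $\tilde{\bm \phi}\in H^1$ and the slope-matching allow one to construct, on each connected component $\Gamma_{F_j}$, a unique $\tilde{\bm r}_j\in RT$ whose trace agrees with $\tilde{\bm \phi}|_{\Gamma_{F_j}}$: any $\bm r=a\mathbf x+\bm b\in RT$ is determined by the three parameters $(a,\bm b)$, which are exactly fixed by the constant normal value, the endpoint tangential value, and the tangential slope on a chosen edge, and the matching propagates around the component edge by edge.

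Set $\bm \phi:=\tilde{\bm \phi}-\tilde{\bm r}_1$ (with $\tilde{\bm r}_1:=\bm 0$ if $\Gamma_F=\emptyset$). Then $\operatorname{sym}\operatorname{curl}\bm \phi=\bm \tau$, $\bm \phi|_{\Gamma_{F_1}}=\bm 0$, $\bm \phi|_{\Gamma_{F_j}}=(\tilde{\bm r}_j-\tilde{\bm r}_1)|_{\Gamma_{F_j}}$ lies in the trace of $RT$ for $j\ge 2$, and $\bm \phi\cdot\mathbf n$ is constant on each straight segment of $\Gamma_S$ because any $RT$ function already has this property. Hence $\bm \phi\in V_0$. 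The bound $|\bm \phi|_1\lesssim \|\bm \tau\|_0$ follows since the map $\tilde{\bm \phi}\mapsto \tilde{\bm r}_1$ is linear with finite-dimensional range and is continuous with respect to the trace on one edge, hence controlled by $\|\tilde{\bm \phi}\|_{1}\lesssim \|\bm \tau\|_0$. The main obstacle is making the pointwise boundary and vertex identities rigorous for $H^1$ potentials rather than $C^1$ ones; this will be handled either via a density/regularization argument approximating $\tilde{\bm \phi}$ by smooth fields and passing to the limit in the trace formulas, or equivalently by interpreting all boundary operations in the appropriate dual trace spaces inherited from $H(\operatorname{div}\operatorname{div},\Omega;\mathbb S)$.
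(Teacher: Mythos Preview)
Your strategy---pull $\tilde{\bm\phi}$ from the free complex, read boundary constraints from $\bm\tau\in\Sigma_0$, then correct by an element of $RT$---coincides with the paper's. The execution differs in one important place.

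The paper never passes through pointwise boundary conditions on $\bm\tau$. Instead it uses the single identity
\[
0=(\bm\tau,\nabla^2 v)=(\operatorname{curl}\bm\phi,\nabla^2 v)=-\langle\bm\phi,\partial_t(\nabla v)\rangle_{\partial\Omega},\qquad v\in C^\infty(\overline\Omega)\cap\Lambda,
\]
which pairs the \emph{trace} of $\bm\phi\in H^1$ against smooth boundary data. Localized choices of $v$ then force $\bm\phi\cdot\mathbf n$ constant on each segment of $\Gamma_S\cup\Gamma_F$, $\bm\phi\cdot\mathbf t$ affine on each segment of $\Gamma_F$, and---crucially---$[\![\partial_t(\bm\phi\cdot\mathbf t)]\!]_{\mathbf x}=0$ at corners, the last using only continuity of $\bm\phi$ at $\mathbf x$ (available from $H^{1/2}$ once the trace is known to be piecewise linear). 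No regularity beyond $\bm\phi\in H^1$ is needed anywhere.

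Your route instead invokes \eqref{symcurlv:2}--\eqref{symcurlv:3} and, for the vertex condition, asserts $[\![\operatorname{div}\tilde{\bm\phi}]\!]_{\mathbf x}=0$ from basis-independence of $\operatorname{tr}(\nabla\tilde{\bm\phi})$. This requires $\nabla\tilde{\bm\phi}$ to have a single well-defined value at $\mathbf x$, which is unavailable for $\tilde{\bm\phi}\in H^1$ (its gradient is only $L^2$). The density patch you propose is not obviously adequate here: mollifying $\tilde{\bm\phi}$ destroys the exact membership $\operatorname{sym}\operatorname{curl}\tilde{\bm\phi}\in\Sigma_0$, and the corner constraint on $\bm\tau$ is a pointwise-limit quantity that need not survive approximation. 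Swapping in the paper's localized test-function argument for the corner step fixes this; your edge-level reasoning and the final $RT$ correction go through.

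For the bound, the paper observes that the boundary conditions make $\|\operatorname{sym}\operatorname{curl}\cdot\|_0$ a norm on $V_0$ and applies Korn's inequality to get $|\bm\phi|_1\lesssim\|\bm\tau\|_0$ directly. Your argument (controlling $\tilde{\bm r}_1$ through its trace on one edge) is correct but needs $\|\tilde{\bm\phi}\|_1\lesssim\|\bm\tau\|_0$ rather than the seminorm, so you should first pick the minimal-norm representative in $H^1/RT$ before reading off $\tilde{\bm r}_1$.
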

\begin{proof}
    For any $\bm \tau\in \Sigma_{0}$ satisfying $\operatorname{div}\operatorname{div}\bm \tau=0$, the exactness of the continuous divdiv complex \eqref{divdivComp2d} shows that there exists some $\bm \phi\in H^1(\Omega;\mathbb R^2)$ such that $\operatorname{sym}\operatorname{curl}\bm \phi = \bm \tau$. 
 For any $v\in C^{\infty}(\overline{\Omega})\cap \Lambda$, an integration by parts leads to
\begin{equation}\label{con:ker:divdiv}
     0 = (\bm \tau, \nabla^2 v)=(\operatorname{sym}\operatorname{curl}\bm \phi, \nabla^2 v)= (\operatorname{curl}\bm \phi, \nabla^2 v)=-\langle\bm \phi, \partial_t(\nabla v)\rangle_{\partial\Omega}.
\end{equation}
Decomposing the vector-valued function $\bm \phi$ into its tangential and normal components yields
\begin{equation}\label{n:t:decom}
    0=\langle\bm \phi, \partial_t(\nabla v)\rangle_{\partial\Omega}=\langle\bm \phi\cdot\mathbf t, \partial_{tt}v\rangle_{\partial\Omega}+\langle\bm \phi\cdot\mathbf n, \partial_{tn} v\rangle_{\partial\Omega}.
\end{equation}

For any straight edge $E\subset \Gamma_S$ and any $g\in C^\infty_0(E)$ with compact support $\operatorname{supp} g$ strictly contained in $E$, let $\Omega_1$ and $\Omega_2$ be open neighborhood such that $\operatorname{supp} g\subset \Omega_1\subsetneqq\Omega_2$ and $\Omega_2\cap (\partial\Omega\setminus E)=\emptyset$.
Extend $g$ in the normal direction of $E$ constantly to the neighborhood $\Omega_2$ and still denoted by $g$. Let $v= g\chi_E l_E$ with $l_E=0$ being the equation of $E$ and $\chi_E\in C_0^\infty(\Omega_2)$ a indicator function such that $\chi_E\equiv 1$ on $\Omega_1$. Note that $\partial_n v |_E = g$. Substituting such $v$ into \eqref{n:t:decom} results in 
\begin{equation}
    \langle\bm \phi\cdot\mathbf n, \partial_{t} g\rangle_{E}=0.
\end{equation}
The arbitrariness of $g$ shows that $\bm \phi\cdot\mathbf n|_{E}$ is constant.

Similarly, it can be shown that $\bm \phi\cdot \mathbf n|_E$ is constant for each straight edge $E\subset \Gamma_F$. Setting $v=g\chi_E(1-l_E^2)$ in \eqref{n:t:decom} leads to 
\begin{equation}
    \langle\bm \phi\cdot \mathbf t, \partial_{tt}g\rangle_E=0.
\end{equation}
The arbitrariness of $g$ shows that $\bm \phi\cdot\mathbf t|_E$ is linear. Therefore, there exists $\bm r\in RT$ such that $\bm \phi|_E = \bm r|_E$ for each straight edge $E\subset \Gamma_F$.

For any corner point $\mathbf x\in \mathcal V_F$ shared by two segments $E_1\subset \Gamma_F$ and $E_2\subset \Gamma_F$, since $\bm \phi\in H^1(\Omega;\mathbb R^2)$, i.e. $\bm \phi|_{E_1\cup E_2}\in H^{\frac{1}{2}}(E_1\cup E_2;\mathbb R^2)$, and $\bm \phi$ is linear on $E_i$ with $i = 1,2$, it holds (see e.g., \cite[pp. 49]{bbf})
\begin{equation}\label{con:bm:v}
    \bm \phi|_{E_1}(\mathbf x)=\bm \phi|_{E_2}(\mathbf x).
\end{equation}
 Let $\Omega_{\mathbf x}\subset\Omega$ represent an open neighborhood of the corner point $\mathbf x$.
For any $v\in C_0^\infty(\Omega_{\mathbf x})\cap \Lambda$ in \eqref{n:t:decom}, applying integration by parts together with the fact that $\bm \phi\cdot\mathbf n|_{E_i}$ is constant and $\bm \phi\cdot\mathbf t|_{E_i}$ is linear gives rise to
\begin{align*}
     0&=\langle\bm \phi, \partial_t(\nabla v)\rangle_{\partial\Omega}=[\![\bm \phi\cdot \nabla v]\!]_{\mathbf x}-\sum_{i=1}^2\left(\langle\partial_t(\bm \phi\cdot \mathbf t), \partial_t v\rangle_{E_i}+\langle\partial_t(\bm \phi\cdot \mathbf n), \partial_n v\rangle_{E_i}\right)\\
     &=[\![\bm \phi\cdot \nabla v]\!]_{\mathbf x}-\sum_{i=1}^2\langle\partial_t(\bm \phi\cdot \mathbf t), \partial_t v\rangle_{E_i}=[\![\bm \phi\cdot \nabla v]\!]_{\mathbf x}-[\![\partial_{t}(\bm \phi\cdot \mathbf t)]\!]_{\mathbf x} v(\mathbf x).
\end{align*}
This and \eqref{con:bm:v} yield $[\![\partial_{t}(\bm \phi\cdot \mathbf t) ]\!]_{\mathbf x}=0$ and consequent the existence of some $\bm r\in RT$ such that 
\begin{equation*}
    \bm \phi|_{E_i} = \bm r|_{E_i}\quad \text{i=1, 2}.
\end{equation*}
The same arguments show that $\bm r\in RT$ is uniquely defined on each connected component of $\Gamma_F$. Assume $\bm r=0$ on the first component $\Gamma_{F_1}$ without loss of generality. 
Therefore, $\bm \phi\in V_0$ follows.

Furthermore, the boundary conditions for $\bm \phi$ imply that $\|\operatorname{sym}\operatorname{curl}\bm \phi\|_0$ is a norm. Assume $\bm \phi= (\phi_1, \phi_2)^{\mathrm T}$ and $\bm \phi^\perp = (\phi_2, -\phi_1)^{\mathrm T}$. In light of Korn's inequality, it holds that  
\begin{equation*}
\|\operatorname{sym}\operatorname{curl}\bm \phi\|_0= \|\varepsilon(\bm \phi^\perp)\|_0 \gtrsim \|\nabla (\bm \phi^\perp)\|_0 = \|\nabla \bm \phi\|_0.
\end{equation*}
Thus $|\bm \phi|_1\lesssim \|\operatorname{sym}\operatorname{curl}\bm \phi\|_0=\|\bm \tau\|_0$, which concludes the proof. 
\end{proof}
The continuous divdiv complex with boundary conditions is presented below.
\begin{theorem}\label{con:exact:mbd}
    The divdiv complex
     \begin{equation}\label{eq:conComplex:bd}
0 \stackrel{\subset}{\longrightarrow} V_{0} \stackrel{\operatorname{sym}\operatorname{curl}}{\longrightarrow} \Sigma_{0} \stackrel{\operatorname{div}\operatorname{div}}{\longrightarrow}L^2(\Omega) \rightarrow 0
\end{equation}
    is exact on a contractible domain. 
\end{theorem}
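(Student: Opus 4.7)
The plan is to verify three defining conditions for exactness of the complex \eqref{eq:conComplex:bd}: injectivity of $\operatorname{sym}\operatorname{curl}$ on $V_0$, the identity $\operatorname{sym}\operatorname{curl}(V_0)=\ker(\operatorname{div}\operatorname{div}|_{\Sigma_0})$, and surjectivity of $\operatorname{div}\operatorname{div}:\Sigma_0\to L^2(\Omega)$. Two of these follow almost immediately from results already in hand. For injectivity, $\operatorname{sym}\operatorname{curl}\bm\phi=0$ gives $\varepsilon(\bm\phi^\perp)=0$, so on a contractible domain $\bm\phi$ lies in the three-dimensional space $RT$; the condition $\bm\phi|_{\Gamma_{F_1}}=\bm 0$ then forces $\bm\phi\equiv 0$, since a nonzero $RT$-field cannot vanish on a curve containing two distinct points. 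For surjectivity of $\operatorname{div}\operatorname{div}$, I would combine the inf-sup condition in Theorem \ref{inf:sup:con:bd} with the closed range theorem, and rule out nontrivial annihilators by testing against $\bm\tau\in C_c^\infty(\Omega;\mathbb S)\subset\Sigma_0$ to conclude $v=0$. The reverse inclusion at $\Sigma_0$, namely $\ker(\operatorname{div}\operatorname{div}|_{\Sigma_0})\subseteq \operatorname{sym}\operatorname{curl}(V_0)$, is precisely the content of Lemma \ref{leftarrow}.

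The substantive remaining task, and the main obstacle, is the forward inclusion $\operatorname{sym}\operatorname{curl}(V_0)\subseteq \Sigma_0$: one must show that the boundary conditions defining $V_0$ are strong enough to guarantee $\operatorname{sym}\operatorname{curl}\bm\phi\in\Sigma_0$ (the algebraic identity $\operatorname{div}\operatorname{div}\circ\operatorname{sym}\operatorname{curl}=0$ is clear). My plan is to mirror the computation in the proof of Lemma \ref{leftarrow} but run in the reverse direction. Given $\bm\phi\in V_0$ and a smooth $v\in\Lambda$, I would integrate by parts to reduce $(\operatorname{sym}\operatorname{curl}\bm\phi,\nabla^2 v)$ to the boundary integral $-\langle\bm\phi,\partial_t(\nabla v)\rangle_{\partial\Omega}$ as in \eqref{con:ker:divdiv}, then split it into tangential and normal components as in \eqref{n:t:decom}, and check edge by edge that every contribution vanishes using the pointwise data built into $V_0$. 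On $\Gamma_C$ both $v$ and $\partial_n v$ vanish; on each straight segment of $\Gamma_S$, $v=0$ kills the $\partial_{tt}v$ term, and the constancy of $\bm\phi\cdot\mathbf n$ together with integration by parts along the segment eliminates the $\partial_{tn}v$ term; on each $\Gamma_{F_j}$ the $RT$-form $\bm\phi=\bm r_j$ reduces the surviving surface integral to corner contributions that cancel because the single field $\bm r_j$ is used across the entire connected component. A density argument in $\Lambda$ then extends the identity to general test functions.

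The delicate bookkeeping will happen at corner points where different boundary regimes meet, especially where a $\Gamma_S$-segment abuts a $\Gamma_F$-segment carrying a nontrivial $\bm r_j$, and where consecutive $\Gamma_S$-edges join. The derivation of the $V_0$ conditions in Lemma \ref{leftarrow} was driven exactly by forcing the analogous boundary terms to cancel in the opposite direction, so the present argument is the structural mirror image of that proof, and the main technical work is to translate those vanishing identities into the pointwise boundary statements encoded in $V_0$.
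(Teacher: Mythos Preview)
Your plan is correct and matches the paper's proof: trivial exactness at $V_0$, surjectivity from Theorem~\ref{inf:sup:con:bd}, the backward inclusion at $\Sigma_0$ from Lemma~\ref{leftarrow}, and the forward inclusion via the boundary integral identity and the $V_0$ data. One small simplification the paper makes: instead of starting from $-\langle\bm\phi,\partial_t(\nabla v)\rangle_{\partial\Omega}$ as in \eqref{con:ker:divdiv}, it integrates by parts once more on the closed boundary to write $(\operatorname{sym}\operatorname{curl}\bm\phi,\nabla^2 v)=\langle\partial_t\bm\phi,\nabla v\rangle_{\partial\Omega}$, which places the tangential derivative on $\bm\phi$; since $\bm\phi\cdot\mathbf n$ is constant on each straight segment of $\Gamma_S\cup\Gamma_F$ and $\partial_t v=0$ on $\Gamma_C\cup\Gamma_S$, every $\Gamma_C$ and $\Gamma_S$ contribution vanishes \emph{without} a further integration by parts, so the only corner terms that ever appear are at $\mathcal V_F$ and are killed by the single-$RT$ condition across each connected $\Gamma_{F_j}$---this sidesteps the $\Gamma_S$--$\Gamma_S$ and $\Gamma_S$--$\Gamma_F$ corner bookkeeping (and the density step) you anticipated.
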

\begin{proof}
    The exactness at $V_0$ is trivial and at $L^2(\Omega)$ results from Theorem \ref{inf:sup:con:bd}. To demonstrate the exactness at $\Sigma_0$, given Lemma \ref{leftarrow}, it suffices to prove  $\operatorname{sym}\operatorname{curl}V_0 \subset  \Sigma_0$. Given any $\bm \phi\in V_0$, for all $v\in\Lambda$, integration by parts leads to
    \begin{align*}
        (\operatorname{sym}\operatorname{curl}\bm \phi, \nabla^2 v)=\langle\operatorname{curl}\bm \phi\mathbf n , \nabla v\rangle_{\partial \Omega} = \langle\partial_t \bm \phi, \nabla v\rangle_{\partial \Omega}.
    \end{align*}
Since $v|_{\Gamma_C\cup \Gamma_S}=0$ and $\partial_n v|_{\Gamma_C}=0$, decomposing $\bm \phi$ into its tangential and normal components plus a further integration by parts gives rise to
\begin{align*}
    &(\operatorname{sym}\operatorname{curl}\bm \phi, \nabla^2 v) = \sum_{E\subset \partial\Omega}\left(\langle\partial_t(\bm \phi\cdot \mathbf n), \partial_n v\rangle_E+\langle\partial_t(\bm \phi\cdot\mathbf t), \partial_t v\rangle_E\right)\\
&=\sum_{E\subset \Gamma_S\cup \Gamma_F}\langle\partial_t(\bm \phi\cdot \mathbf n), \partial_n v\rangle_E+\sum_{\mathbf x\in\mathcal V_F}[\![\partial_{t}(\bm \phi\cdot \mathbf t)]\!]_{\mathbf x}v(\mathbf x)-\sum_{E\subset \Gamma_F}\langle\partial_{tt}(\bm \phi\cdot\mathbf t), v\rangle_E.
\end{align*}
Note that $\bm \phi\cdot\mathbf n|_E$ is constant for each $E\subset \Gamma_S\cup \Gamma_F$, $\bm \phi\cdot \mathbf t$ is a linear function on $E\subset \Gamma_F$ and  $\partial_t(\bm \phi\cdot \mathbf t)$ is continuous at each corner point $\mathbf x\in\mathcal V_F$. Thus $(\operatorname{sym}\operatorname{curl}\bm \phi, \nabla^2 v)=0$, and consequently $\operatorname{sym}\operatorname{curl}\bm \phi\in \Sigma_0$. 
\end{proof}

\subsection{Disctrete divdiv complex with boundary conditions}
The exactness of the discrete divdiv complex composed of finite element spaces with boundary conditions is demonstrated through two lemmas. Let $ V_{h,0}:=V_h\cap V_0$.

\begin{lemma}\label{rightarrow}
There holds
    $$\operatorname{sym}\operatorname{curl}V_{h,0} \subset \Sigma_{h,0}\cap \operatorname{ker}(\operatorname{div}\operatorname{div}). $$
\end{lemma}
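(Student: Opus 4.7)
The plan is to verify each defining condition of $\Sigma_{h,0}\cap\operatorname{ker}(\operatorname{div}\operatorname{div})$ separately for $\bm\tau:=\operatorname{sym}\operatorname{curl}\bm\phi$. Since $V_{h,0}\subset V_h$, the discrete complex \eqref{eq:DisComp} already delivers $\bm\tau\in\Sigma_h$ together with $\operatorname{div}\operatorname{div}\bm\tau=0$ for free, so all the work reduces to checking the three boundary conditions that single out $\Sigma_{h,0}$ inside $\Sigma_h$.

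First I would address $\mathbf n^{\mathrm T}\bm\tau\mathbf n|_{\Gamma_S\cup\Gamma_F}=0$ using the first formula of \eqref{symcurlv:2}, which rewrites this normal-normal trace as $\partial_t(\bm\phi\cdot\mathbf n)$ on any straight boundary edge $E$. On each $E\subset\Gamma_S$, the boundary condition in $V_0$ forces $\bm\phi\cdot\mathbf n|_E$ to be constant, so the derivative vanishes. On each $E\subset\Gamma_F$, $\bm\phi|_E=\bm r_j|_E$ with $\bm r_j=a_j\mathbf x+\bm b_j\in RT$; since $\mathbf x\cdot\mathbf n$ is constant along a straight segment, $\bm r_j\cdot\mathbf n|_E$ is constant and the conclusion again follows.

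Next I would tackle the Neumann-type condition on $\Gamma_F$ by combining \eqref{symcurlv:3} with the second formula of \eqref{symcurlv:2} and the local-frame identity $\operatorname{div}\bm\phi=\partial_t(\bm\phi\cdot\mathbf t)+\partial_n(\bm\phi\cdot\mathbf n)$ that holds on a straight edge with constant $\mathbf t,\mathbf n$. The two occurrences of $\partial_t\partial_n(\bm\phi\cdot\mathbf n)$ cancel, leaving
\begin{equation*}
\mathbf n^{\mathrm T}\operatorname{div}\bm\tau+\partial_t(\mathbf t^{\mathrm T}\bm\tau\mathbf n)=\partial_{tt}(\bm\phi\cdot\mathbf t)\quad\text{on each straight }E\subset\Gamma_F.
\end{equation*}
Because $\bm\phi\cdot\mathbf t|_E=\bm r_j\cdot\mathbf t|_E$ is affine in the arclength, the right-hand side vanishes identically.

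The corner jump at $\mathbf x\in\mathcal V_F$ is the step I expect to be the main obstacle; it is where the construction of $V_{h,0}$ truly pays off. Denote by $e_\pm$ the two edges of $\Gamma_F$ meeting at $\mathbf x$, with tangents $\mathbf t_\pm$. Both edges lie in the same connected component $\Gamma_{F_j}$, so $\bm\phi=\bm r_j$ on $e_+\cup e_-$ and in particular $\partial_{t_\pm}\bm\phi=a_j\mathbf t_\pm$. On the other hand, the vertex continuity prescribed by the degrees of freedom \eqref{v:dof:1} makes $M:=\nabla\bm\phi(\mathbf x)$ single-valued, and the chain rule yields $M\mathbf t_\pm=a_j\mathbf t_\pm$. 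Since $\mathbf t_+$ and $\mathbf t_-$ are linearly independent, these two eigenvector relations force $M=a_jI$, which renders $\operatorname{curl}\bm\phi(\mathbf x)$ skew-symmetric and hence $\bm\tau(\mathbf x)=\operatorname{sym}\operatorname{curl}\bm\phi(\mathbf x)=0$. In particular $[\![\mathbf t^{\mathrm T}\bm\tau\mathbf n]\!]_{\mathbf x}=0$, completing the verification and at the same time exposing why the $RT$ trace condition on each free component of the boundary is exactly what makes the complex exact.
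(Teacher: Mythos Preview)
Your argument is correct and follows the paper's proof almost verbatim: both first invoke the discrete complex \eqref{eq:DisComp} to secure $\bm\tau\in\Sigma_h\cap\ker(\operatorname{div}\operatorname{div})$, and then verify the three $\Sigma_{h,0}$ boundary conditions via the trace identities \eqref{symcurlv:2}--\eqref{symcurlv:3}. The only visible difference is at the corner $\mathbf x\in\mathcal V_F$: the paper rewrites $\mathbf t^{\mathrm T}\bm\tau\mathbf n=\partial_t(\bm\phi\cdot\mathbf t)-\tfrac12\operatorname{div}\bm\phi$ and observes that each summand has zero jump (the first from the $RT$ trace, the second from the single-valuedness of $\nabla\bm\phi(\mathbf x)$), whereas you combine those same two ingredients to deduce the slightly stronger fact $\nabla\bm\phi(\mathbf x)=a_jI$ and hence $\bm\tau(\mathbf x)=0$. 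This is a cosmetic variation, not a different method.
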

\begin{proof}
    Given any $\bm \phi \in V_{h,0}$, the fact $\bm \phi\in V_h$ together with the exactness of the discrete $\operatorname{div}\operatorname{div}$ complex \eqref{eq:DisComp} shows $\operatorname{sym}\operatorname{curl}\bm \phi \in \Sigma_h\cap \operatorname{ker}(\operatorname{div}\operatorname{div})$. To show $\operatorname{sym}\operatorname{curl}\bm \phi \in\Sigma_{h,0}$,
it remains to prove that $\operatorname{sym}\operatorname{curl}\bm \phi$ satisfies:
    \begin{align}
        &\mathbf n^{\mathrm T}(\operatorname{sym}\operatorname{curl}\bm \phi)\mathbf n|_{\Gamma_S\cup \Gamma_F} = 0,\label{symcurlv:in:sigma1}\\
        &\mathbf n^{\mathrm T}\operatorname{div}\left( \operatorname{sym}\operatorname{curl}\bm \phi\right)|_{\Gamma_F} =-\partial_t (\mathbf t^{\mathrm T}\left( \operatorname{sym}\operatorname{curl}\bm \phi\right) \mathbf n)|_{\Gamma_F},\label{symcurlv:in:sigma2}\\
        &[\![\mathbf t^{\mathrm T}(\operatorname{sym}\operatorname{curl}\bm \phi) \mathbf n]\!]_{\mathbf x}=0\quad \text{for all}\, \,\mathbf x \in \mathcal V_F. \label{symcurl:in:sigma3}
    \end{align}
Since $\bm \phi\cdot\mathbf n$ is constant on each straight segment of $\Gamma_S\cup \Gamma_F$, there holds
\begin{equation}\label{symcurlv:in:sigma:a1}
 \mathbf n^{\mathrm T} \operatorname{sym}\operatorname{curl}\bm \phi\mathbf n|_{\Gamma_S\cup \Gamma_F} =\partial_t(\bm \phi\cdot \mathbf n) |_{\Gamma_S\cup \Gamma_F}=0.  
\end{equation}
Besides, $\bm \phi\cdot\mathbf t$ is a linear function on each straight segment of $\Gamma_F$. This combined with the identities \eqref{symcurlv:2}--\eqref{symcurlv:3} yields \eqref{symcurlv:in:sigma2}:
\begin{equation*}
    (\mathbf n^{\mathrm T}\operatorname{div}\left( \operatorname{sym}\operatorname{curl}\bm \phi\right)+\partial_t (\mathbf t^{\mathrm T}\left( \operatorname{sym}\operatorname{curl}\bm \phi\right) \mathbf n))|_{\Gamma_F} = \partial_{tt}(\bm \phi\cdot\mathbf t)|_{\Gamma_F}=0.
\end{equation*}
For any vertex $\mathbf x\in \mathcal V_h$, $\bm \phi(\mathbf x)$ and $\nabla\bm \phi(\mathbf x)$ are continuous. This, $\mathbf t^{\mathrm T}\operatorname{sym}\operatorname{curl}\bm \phi\mathbf n = -\frac{1}{2}(\operatorname{div}\bm \phi-\partial_t(\bm \phi\cdot\mathbf t))$, and $[\![\partial_{t}(\bm \phi\cdot \mathbf t)]\!]_{\mathbf x}=0$ for all $\mathbf x\in \mathcal V_F$ conclude \eqref{symcurl:in:sigma3}.

\end{proof}


\begin{lemma}\label{onto:map}
    For $k\geq 3$, there holds
    \begin{equation*}
     \operatorname{div}\operatorname{div}\Sigma_{h,0} = U_h.
    \end{equation*}
\end{lemma}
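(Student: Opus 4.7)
My plan is to exploit the continuous surjectivity $\operatorname{div}\operatorname{div}\colon\Sigma_0\to L^2(\Omega)$ provided by Theorem~\ref{con:exact:mbd} together with a commuting interpolation into $\Sigma_{h,0}$. Given $v_h\in U_h\subset L^2(\Omega)$, Theorem~\ref{con:exact:mbd} yields some $\bm\sigma\in\Sigma_0$ with $\operatorname{div}\operatorname{div}\bm\sigma=v_h$. The goal is then to produce $\bm\sigma_h\in\Sigma_{h,0}$ satisfying $\operatorname{div}\operatorname{div}\bm\sigma_h=v_h$.

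First I would regularize $\bm\sigma$ (legitimate since $v_h$ is piecewise polynomial, so a piecewise smooth lift can be chosen) and apply the canonical $\Sigma_h$-interpolation $I_h\bm\sigma$. By the DOFs of $\Sigma_h$ from \cite{hu2021family}---vertex values, edge moments of $\mathbf n^{\mathrm T}\bm\tau\mathbf n$, $\mathbf t^{\mathrm T}\bm\tau\mathbf n$ and $\mathbf n^{\mathrm T}\operatorname{div}\bm\tau$, together with interior moments---the operator $I_h$ commutes with $\operatorname{div}\operatorname{div}$ modulo $\mathcal Q_h$, so $\operatorname{div}\operatorname{div}(I_h\bm\sigma)=\mathcal Q_h(\operatorname{div}\operatorname{div}\bm\sigma)=\mathcal Q_h v_h=v_h$ since $v_h\in U_h$. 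It remains to check $I_h\bm\sigma\in\Sigma_{h,0}$; formula~\eqref{trace:gammaD:C1} translates $\bm\sigma\in\Sigma_0$ into the edgewise and pointwise identities $\mathbf n^{\mathrm T}\bm\sigma\mathbf n=0$ on $\Gamma_S\cup\Gamma_F$, $\mathbf n^{\mathrm T}\operatorname{div}\bm\sigma+\partial_t(\mathbf t^{\mathrm T}\bm\sigma\mathbf n)=0$ on $\Gamma_F$, and $[\![\mathbf t^{\mathrm T}\bm\sigma\mathbf n]\!]_{\mathbf x}=0$ at $\mathbf x\in\mathcal V_F$, and each of these conditions is inherited by $I_h\bm\sigma$ through the corresponding vertex or edge DOF.

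A parallel route is a correction argument using the unconstrained exactness \eqref{eq:DisComp}: pick any $\bm\tau_h^\star\in\Sigma_h$ with $\operatorname{div}\operatorname{div}\bm\tau_h^\star=v_h$ and then subtract $\operatorname{sym}\operatorname{curl}\bm\phi_h\in\ker(\operatorname{div}\operatorname{div}|_{\Sigma_h})$ for a $\bm\phi_h\in V_h$ chosen so that $\bm\tau_h^\star-\operatorname{sym}\operatorname{curl}\bm\phi_h\in\Sigma_{h,0}$. Via \eqref{symcurlv:2}--\eqref{symcurlv:3}, this reduces to prescribing tangential and normal traces of $\bm\phi_h$ edge by edge, which the boundary DOFs \eqref{v:dof:1}--\eqref{v:dof:3} of $V_h$ can realize provided that compatibility at corner vertices is verified.

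The main obstacle I expect is the coupled free-boundary condition on $\Gamma_F$ jointly with the vertex jump at $\mathcal V_F$. In the interpolation approach, the DOFs controlling $\mathbf t^{\mathrm T}\bm\tau\mathbf n$ and $\mathbf n^{\mathrm T}\operatorname{div}\bm\tau$ must assemble into the combined derivative $\partial_t(\mathbf t^{\mathrm T} I_h\bm\sigma\,\mathbf n)+\mathbf n^{\mathrm T}\operatorname{div}(I_h\bm\sigma)$ in a way that preserves the continuous vanishing; in the correction approach, matching the corner jumps $[\![\mathbf t^{\mathrm T}\bm\tau_h^\star\mathbf n]\!]_{\mathbf x}$ by a single globally defined $\bm\phi_h\in V_h$ requires compatibility at vertices shared by distinct boundary pieces, and this global coupling is where the proof will need to be executed most carefully.
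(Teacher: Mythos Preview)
Your overall strategy---lift $v_h$ to a continuous $\bm\sigma$ with $\operatorname{div}\operatorname{div}\bm\sigma=v_h$ and then interpolate into $\Sigma_{h,0}$ via a commuting operator---is exactly what the paper does. The gap is that you invoke the wrong continuous surjectivity and, as a result, never resolve the obstacle you yourself flag on $\Gamma_F$.

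Theorem~\ref{con:exact:mbd} only hands you an abstract $\bm\sigma\in\Sigma_0\subset H(\operatorname{div}\operatorname{div})$. Such a $\bm\sigma$ need not be regular enough for the canonical $\Sigma_h$-interpolation (your ``regularize, since $v_h$ is piecewise polynomial'' is not a proof: you would have to perturb $\bm\sigma$ without disturbing $\operatorname{div}\operatorname{div}\bm\sigma$ \emph{and} the $\Sigma_0$ boundary conditions simultaneously). More seriously, membership in $\Sigma_0$ only gives the \emph{combined} condition $\partial_t(\mathbf t^{\mathrm T}\bm\sigma\mathbf n)+\mathbf n^{\mathrm T}\operatorname{div}\bm\sigma=0$ on $\Gamma_F$, whereas the edge degrees of freedom of $\Sigma_h$ see $\mathbf t^{\mathrm T}\bm\tau\mathbf n$ and $\mathbf n^{\mathrm T}\operatorname{div}\bm\tau$ \emph{separately}. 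There is no reason the interpolant should reproduce the combined constraint from moments of the two pieces individually; this is precisely the assembly problem you name but do not solve. Your correction route via $\operatorname{sym}\operatorname{curl}\bm\phi_h$ runs into the same wall at the corner compatibility.

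The paper sidesteps both issues at once: instead of Theorem~\ref{con:exact:mbd}, it goes back to the \emph{constructive} proof of Theorem~\ref{inf:sup:con:bd} in Appendix~\ref{sec:appendix}, which produces $\bm\tau\in\Sigma_0\cap H^1(\operatorname{div},\Omega;\mathbb S)$ with the stronger, \emph{decoupled} boundary data $\mathbf n^{\mathrm T}\bm\tau\mathbf n|_{\Gamma_S\cup\Gamma_F}=0$, $\mathbf t^{\mathrm T}\bm\tau\mathbf n|_{\Gamma_F}=0$, and $\mathbf n^{\mathrm T}\operatorname{div}\bm\tau|_{\Gamma_F}=0$. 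With each piece vanishing separately and with $H^1(\operatorname{div})$ regularity in hand, the quasi-interpolation of \cite[Theorem~3.1]{hu2021family} (modified on $\Gamma_S\cup\Gamma_F$) lands in $\Sigma_{h,0}$ and commutes with $\operatorname{div}\operatorname{div}$ up to $\mathcal Q_h$, giving $\operatorname{div}\operatorname{div}\bm\tau^\ast=\mathcal Q_h q_h=q_h$. The missing idea in your proposal is precisely this choice of a lift with decoupled free-boundary traces; once you use it, the obstacle you anticipated disappears.
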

\begin{proof}
   The inclusion relationship $ \operatorname{div}\operatorname{div}\Sigma_{h,0} \subset U_h$ immediately follows from the exactness of the discrete divdiv complex \eqref{eq:DisComp}. It suffices to prove $ U_h\subset \operatorname{div}\operatorname{div}\Sigma_{h,0}$. Give any $q_h\in U_h$, the proof of Theorem \ref{inf:sup:con:bd} in Appendix \ref{sec:appendix} ensures the existence of some continuous function $\bm \tau\in \Sigma_0\cap H^1(\operatorname{div},\Omega;\mathbb S)$ with $\mathbf n^{\mathrm T}\bm \tau\mathbf n |_{\Gamma_S\cup \Gamma_F}=0$, $\mathbf t^{\mathrm T}\bm \tau\mathbf n|_{\Gamma_F}=0$, and $\operatorname{div}\bm \tau\cdot \mathbf n|_{\Gamma_F}=0$ such that $\operatorname{div}\operatorname{div}\bm \tau = q_h$. This function, with the quasi-interpolation operator in \cite[Theorem 3.1]{hu2021family} modified on $\Gamma_S$ and $\Gamma_F$ acted upon, gives rise to a $\bm \tau^\ast\in \Sigma_{h,0}$ such that 
   \begin{equation*}
       \operatorname{div}\operatorname{div}\bm \tau^\ast = \mathcal Q_h\operatorname{div}\operatorname{div}\bm \tau = \mathcal Q_h q_h = q_h.
   \end{equation*}
   Thus $ U_h\subset \operatorname{div}\operatorname{div}\Sigma_{h,0}$. 
\end{proof}

\begin{theorem}\label{Dis:exact:mbd}
    The divdiv complex 
    \begin{equation}\label{eq:DisComp:bd}
0 \stackrel{\subset}{\longrightarrow} V_{h,0} \stackrel{\operatorname{sym}\operatorname{curl}}{\longrightarrow} \Sigma_{h,0} \stackrel{\operatorname{div}\operatorname{div}}{\longrightarrow}U_{h} \rightarrow 0
\end{equation}
is exact on a contractible domain. 
\end{theorem}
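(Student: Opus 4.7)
The plan is to mirror the continuous exactness argument of Theorem \ref{con:exact:mbd}, now using the discrete divdiv complex \eqref{eq:DisComp} together with Lemmas \ref{rightarrow} and \ref{onto:map}. Two of the three exactness conditions are essentially immediate. Exactness at $U_h$ is Lemma \ref{onto:map}. Exactness at $V_{h,0}$ amounts to injectivity of $\operatorname{sym}\operatorname{curl}$: any $\bm\phi\in V_{h,0}$ with $\operatorname{sym}\operatorname{curl}\bm\phi=\bm 0$ lies in $RT$ by exactness of \eqref{eq:DisComp} at $V_h$, and the constraint $\bm\phi|_{\Gamma_{F_1}}=\bm 0$ encoded in $V_0$ then forces $\bm\phi\equiv\bm 0$.

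The substantive work is exactness at $\Sigma_{h,0}$. One inclusion $\operatorname{sym}\operatorname{curl}V_{h,0}\subset\Sigma_{h,0}\cap\ker(\operatorname{div}\operatorname{div})$ is Lemma \ref{rightarrow}. For the reverse, given $\bm\tau\in\Sigma_{h,0}$ with $\operatorname{div}\operatorname{div}\bm\tau=0$, I will first invoke exactness of \eqref{eq:DisComp} to produce some $\bm\phi\in V_h$ with $\operatorname{sym}\operatorname{curl}\bm\phi=\bm\tau$, and then verify that $\bm\phi$ satisfies the boundary conditions defining $V_0$, up to a subtraction of an $RT$ field, which lies in $V_h$ and is annihilated by $\operatorname{sym}\operatorname{curl}$. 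The verification essentially runs the computation of Lemma \ref{rightarrow} in reverse. The condition $\mathbf n^{\mathrm T}\bm\tau\mathbf n=0$ on $\Gamma_S\cup\Gamma_F$ combined with \eqref{symcurlv:2} yields $\partial_t(\bm\phi\cdot\mathbf n)=0$ on each mesh edge, which by $H^1$-continuity of $\bm\phi$ across interior vertices of a straight segment promotes to $\bm\phi\cdot\mathbf n$ being constant on that segment. On $\Gamma_F$, the balance $\mathbf n^{\mathrm T}\operatorname{div}\bm\tau=-\partial_t(\mathbf t^{\mathrm T}\bm\tau\mathbf n)$ together with \eqref{symcurlv:3} produces $\partial_{tt}(\bm\phi\cdot\mathbf t)=0$, so $\bm\phi\cdot\mathbf t$ is linear on each straight segment of $\Gamma_F$.

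The main obstacle is the corner condition at $\mathbf x\in\mathcal V_F$: the hypothesis $[\![\mathbf t^{\mathrm T}\bm\tau\mathbf n]\!]_{\mathbf x}=0$ must be converted into the compatibility $[\![\partial_t(\bm\phi\cdot\mathbf t)]\!]_{\mathbf x}=0$. For this, the key observation is that on a straight edge \eqref{symcurlv:2} gives the pointwise identity $\mathbf t^{\mathrm T}\operatorname{sym}\operatorname{curl}\bm\phi\,\mathbf n=\partial_t(\bm\phi\cdot\mathbf t)-\tfrac12\operatorname{div}\bm\phi$, and the vertex continuity of $\nabla\bm\phi$ imposed in \eqref{v:dof:1} makes $\operatorname{div}\bm\phi(\mathbf x)$ single-valued, so the divergence contribution drops out of the jump. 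Combining this corner relation with the segmentwise constancy of $\bm\phi\cdot\mathbf n$, linearity of $\bm\phi\cdot\mathbf t$, and $H^1$-continuity of $\bm\phi$ at the corner assembles into a single $\bm r_j\in RT$ with $\bm\phi|_{\Gamma_{F_j}}=\bm r_j|_{\Gamma_{F_j}}$ for each connected component $\Gamma_{F_j}$. Subtracting $\bm r_1$ globally, which remains in $V_h$ and leaves $\operatorname{sym}\operatorname{curl}\bm\phi=\bm\tau$ unchanged, then produces the required lift in $V_{h,0}$.
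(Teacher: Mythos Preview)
Your proposal is correct and takes a genuinely different route from the paper's proof. The paper establishes exactness at $\Sigma_{h,0}$ by a dimension-counting argument: it tabulates the number of boundary constraints $N_{1,\text{bc}}$ and $N_{2,\text{bc}}$ imposed on $\Sigma_h$ and $V_h$ to obtain $\Sigma_{h,0}$ and $V_{h,0}$ (a nontrivial bookkeeping over edges, vertices, and corner points on each connected component of $\Gamma_S$ and $\Gamma_F$), shows $N_{2,\text{bc}}-N_{1,\text{bc}}=3=\dim RT$, and concludes $\dim(\operatorname{sym}\operatorname{curl}V_{h,0})=\dim(\Sigma_{h,0}\cap\ker(\operatorname{div}\operatorname{div}))$. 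You instead construct the lift directly, reading off the boundary behavior of $\bm\phi$ from the boundary conditions on $\bm\tau$ via the identities \eqref{symcurlv:2}--\eqref{symcurlv:3}, exactly paralleling the continuous argument of Lemma~\ref{leftarrow}. The crucial ingredient making this work discretely---absent in the continuous setting---is the vertex continuity of $\nabla\bm\phi$ built into $V_h$ (cf.~\eqref{v:dof:1}), which you use both to promote edgewise linearity of $\bm\phi\cdot\mathbf t$ to segmentwise linearity and to make $\operatorname{div}\bm\phi(\mathbf x)$ single-valued at corner points, so that it drops out of the jump $[\![\mathbf t^{\mathrm T}\bm\tau\mathbf n]\!]_{\mathbf x}$. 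Your argument is more transparent and makes the structural reason for exactness explicit; the paper's dimension count is more mechanical but may be easier to adapt to element variants where the vertex degrees of freedom are modified, as in the extended spaces $\widetilde V_h$ of Section~5.
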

\begin{proof}
 The exactness at $V_{h,0}$ is trivial and at $U_h$ follows from Lemma \ref{onto:map} immediately. To confirm the exactness at $\Sigma_{h,0}$, it suffices to prove $\operatorname{sym}\operatorname{curl} V_{h,0} = \Sigma_{h,0}\cap \operatorname{ker}(\operatorname{div}\operatorname{div})$. Lemma \ref{rightarrow} leads to $\operatorname{sym}\operatorname{curl} V_{h,0} \subset \Sigma_{h,0}\cap \operatorname{ker}(\operatorname{div}\operatorname{div})$. 
The dimensions of $\Sigma_{h,0}$ and $V_{h,0}$ are counted below to demonstrate the above equality.

Let $N_0$ represent the number of intersection points of the boundary parts $\Gamma_S$ and $\Gamma_F$. Let $\Gamma_{F_j}$ be a connected component of $\Gamma_F$ and $\Gamma_{S_i}$ a connected component of $\Gamma_S$ with $1\leq j\leq J$ and $1\leq i\leq I$. Recall that $\mathcal V_{F_j}$ and $\mathcal V_{S_i}$ are the sets of all interior corner points on $\Gamma_{F_j}$ and $\Gamma_{S_i}$, respectively.
 Let 
$\# \mathcal E_h(\Gamma_X)$ and $\#\mathcal V_h(\Gamma_X)$ respectively denote the number of edges and vertexes on $\Gamma_X$.  
Note that 
\begin{equation*}
  \operatorname{dim}\Sigma_{h,0} = \operatorname{dim}\Sigma_{h} - N_{1,\text{bc}},
\end{equation*}
where $N_{1,\text{bc}}$ counts the constraints from boundary conditions, and $N_{1,\text{bc}}$ reads the summation of 
\begin{align*}
  &\#\left(\mathcal V_h(\Gamma_{F_j})\backslash\mathcal V_{F_j}\right)+(2k-1)\#\mathcal E_h(\Gamma_{F_j})+3\#\mathcal V_{F_j}~~&&\text{on}~~\Gamma_{F_j},\\
  &\#\mathcal V_h(\Gamma_{S_i})+(k-1)\#\mathcal E_h(\Gamma_{S_i})+\#\mathcal V_{S_i}~~&&\text{on}~~\Gamma_{S_i},
\end{align*}
by subtracting $N_0$ of the repeated constraints at the intersection points in $\overline{\Gamma}_F\cap \overline{\Gamma}_S$.
Similarly, 
\begin{equation*}
  \operatorname{dim}V_{h,0} = \operatorname{dim}V_{h} - N_{2,\text{bc}},
\end{equation*}
where $N_{2,\text{bc}}$ counts the constraints from boundary conditions, and $N_{2,\text{bc}}$ reads the summation of 
\begin{align*}
    &4\#\left(\mathcal V_h(\Gamma_{F_1})\backslash\mathcal V_{F_1}\right)+(2k-4)\#\mathcal E_h(\Gamma_{F_1})+6\#\mathcal V_{F_1}~~&&\text{on}~~\Gamma_{F_1},\\
    &4\#\left(\mathcal V_h(\Gamma_{F_j})\backslash\mathcal V_{F_j}\right)+(2k-4)\#\mathcal E_h(\Gamma_{F_j})+6\#\mathcal V_{F_j}-3~~&&\text{on}~~\Gamma_{F_j}, j\neq 1,\\
    &2\#\mathcal V_h(\Gamma_{S_i})+(k-2)\#\mathcal E_h(\Gamma_{S_i})+2\#\mathcal V_{S_i}-(\#\mathcal V_{S_i}+1)~~&&\text{on}~~\Gamma_{S_i},
    \end{align*}
by subtracting $N_0$ of the repeated constraints at the intersection points in $\overline{\Gamma}_F\cap \overline{\Gamma}_S$.
Therefore, 
\begin{align*}
   N_{2,\text{bc}}-  N_{1,\text{bc}}=3\sum_{j}\left(\#\mathcal V_h(\Gamma_{F_j})-\#\mathcal E_h(\Gamma_{F_j})-1\right)+3+\sum_{i}\left(\#\mathcal V_h(\Gamma_{S_i})-\#\mathcal E_h(\Gamma_{S_i})-1\right).
\end{align*}
Since $\#\mathcal V_h(\Gamma_{F_j})-\#\mathcal E_h(\Gamma_{F_j})-1=0$ and $\#\mathcal V_h(\Gamma_{S_i})-\#\mathcal E_h(\Gamma_{S_i})-1=0$ on each connected component, it holds 
\[N_{2,\text{bc}}-N_{1,\text{bc}}=3.\] 

The exactness of the discrete divdiv complex \eqref{eq:DisComp} shows that $\operatorname{dim}\Sigma_h-\operatorname{dim}U_h = \operatorname{dim}V_h-\operatorname{dim}RT$. Accordingly,
\begin{align*}
\operatorname{dim}\left(\operatorname{sym}\operatorname{curl} V_{h,0}\right)= \operatorname{dim} V_{h,0}-0=\operatorname{dim} V_{h}- N_{2,\text{bc}},
\end{align*}
and
\begin{align*}
\operatorname{dim}\left(\Sigma_{h,0}\cap \operatorname{ker}(\operatorname{div}\operatorname{div})\right) &= \operatorname{dim} \Sigma_{h,0}-\operatorname{dim} (\operatorname{div}\operatorname{div}\Sigma_{h,0})= \operatorname{dim} \Sigma_{h,0}-\operatorname{dim} U_h \\
 &= \operatorname{dim}\Sigma_h -\operatorname{dim} U_h-N_{1,\text{bc}}=\operatorname{dim}V_h-\operatorname{dim}RT-N_{1,\text{bc}}\\
 &=\operatorname{dim}V_h-\operatorname{dim}RT-(N_{2,\text{bc}}-3)=\operatorname{dim}V_h-N_{2,\text{bc}}.
\end{align*}
Thus $\operatorname{dim}\left(\operatorname{sym}\operatorname{curl} V_{h,0}\right)=\operatorname{dim}\left(\Sigma_{h,0}\cap \operatorname{ker}(\operatorname{div}\operatorname{div})\right)$ follows. Consequently, $\operatorname{sym}\operatorname{curl} V_{h,0} = \Sigma_{h,0}\cap \operatorname{ker}(\operatorname{div}\operatorname{div})$. This concludes the exactness at $\Sigma_{h,0}$.
\end{proof}

\begin{remark}
   It can be shown from Theorems \ref{con:exact:mbd}-\ref{Dis:exact:mbd} that the particular definition of $V_0$ is not only sufficient to ensure the exactness of the divdiv complexes at both continuous and discrete levels, but in fact characterizes the only admissible boundary conditions under which such exactness can be achieved. 
\end{remark}

\section{A posteriori error estimation for the symmetric bending moment and the postprocessed deflection}
This section presents a posteriori error estimation for the symmetric bending moment, which induces an a posteriori error estimator:
\begin{equation}\label{estimator:global}
    \begin{aligned}
    \eta =\eta_1+\operatorname{osc}.
\end{aligned}
\end{equation}
Here
\begin{equation*}
    \begin{aligned}
    \eta_1^2 &=\sum_{K\in\mathcal T_h}h_K^2\|\operatorname{rot}(\mathbb C^{-1}\bm \sigma_h)\|^2_{0,K}+\sum_{e\in \mathcal E_h(\Omega)}h_e\|[\![\mathbb C^{-1}\bm \sigma_h\mathbf t_e]\!]\|^2_{0,e}\\
   &+\sum_{e\in \mathcal E_h(\Gamma_C)}h_e\|\mathbb C^{-1}\bm \sigma_h\mathbf t- \partial_{tt} w_{b}\mathbf t-\partial_{t}g_b\mathbf n\|^2_{0,e}+\sum_{e\in \mathcal E_h(\Gamma_S)}h_e\|\mathbf t^{\mathrm T}\mathbb C^{-1}\bm \sigma_h\mathbf t-\partial_{tt} w_{b}\|^2_{0,e}.
\end{aligned}
\end{equation*}
Recall that $\pi_h$, as defined in \eqref{mb:inter}, denotes the interpolation operator that maps onto the space of piecewise $P_k$ on $\Gamma_S\cup \Gamma_F$, and $\mathcal P_{h}^{k-1}$ is the $L^2$ projection operator onto piecewise $P_{k-1}$ on $\Gamma_F$.
In \eqref{estimator:global}, $\operatorname{osc}: = \operatorname{osc}(f,\mathcal T_h)+\operatorname{osc}( m_{b},\mathcal E_h(\Gamma_S\cup\Gamma_F))+\operatorname{osc}( h_{b},\mathcal E_h(\Gamma_F))$ with 
\begin{align*}
    &\operatorname{osc}^2(f,\mathcal T_h): =\sum_{K\in \mathcal T_h}h_K^4\|f-\mathcal Q_h f\|_{0,K}^2,\\
    &\operatorname{osc}^2(m_b, \mathcal E_h(\Gamma_S\cup\Gamma_F)): = \sum_{e\in \mathcal E_h(\Gamma_S\cup\Gamma_F)}h_e\|m_{b}-\pi_h m_{b}\|_{0,e}^2,\\
     &\operatorname{osc}^2(h_{b}, \mathcal E_h(\Gamma_F)): = \sum_{e\in \mathcal E_h(\Gamma_F)}h_e^3\|h_{b}-\mathcal P^{k-1}_h h_{b}\|_{0,e}^2.
\end{align*}

\subsection{Reliability}
\begin{theorem}[Upper bound] \label{relia:bd:upper}
Let $(\bm \sigma, w)$ and $(\bm \sigma_h, w_h)$ be the solution to the continuous \eqref{mixed:bd:con} and the discrete \eqref{mixed:bd:dis} mixed formulation with mixed boundary conditions, respectively. It holds
    \begin{align*}
        \|\bm \sigma-\bm\sigma_h\|_{\mathbb C^{-1}}\lesssim \eta.
    \end{align*}
\end{theorem}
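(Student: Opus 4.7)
The plan is to use a Helmholtz-type decomposition built on the continuous divdiv complex with boundary conditions (Theorem~\ref{con:exact:mbd}), combined with Galerkin orthogonality and piecewise integration by parts. Writing $\bm e := \bm\sigma-\bm\sigma_h\in L^2(\Omega;\mathbb S)$, the first step is to construct an $L^2$-orthogonal splitting
\[
\bm e = \mathbb C\nabla^2\xi + \operatorname{sym}\operatorname{curl}\bm\phi, \qquad \xi\in\Lambda,\ \bm\phi\in V_0,
\]
where $\xi\in\Lambda$ is defined as the unique solution of the biharmonic-type problem $(\mathbb C\nabla^2\xi,\nabla^2 v)=(\bm e,\nabla^2 v)$ for all $v\in\Lambda$. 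The residue $\bm\eta := \bm e - \mathbb C\nabla^2\xi$ is then $L^2$-orthogonal to $\nabla^2\Lambda$, so $\operatorname{div}\operatorname{div}\bm\eta=0$ distributionally and $\bm\eta\in\Sigma_0\cap\ker(\operatorname{div}\operatorname{div})$; Theorem~\ref{con:exact:mbd} then supplies $\bm\phi\in V_0$ with $\operatorname{sym}\operatorname{curl}\bm\phi=\bm\eta$. Well-posedness of the biharmonic problem and the Korn-type estimate used in Lemma~\ref{leftarrow} give $\|\xi\|_2+|\bm\phi|_1\lesssim\|\bm e\|_{\mathbb C^{-1}}$, and the orthogonality of the splitting follows from the inclusion $\operatorname{sym}\operatorname{curl}V_0\subset\Sigma_0\cap\ker(\operatorname{div}\operatorname{div})$ by testing against $\xi\in\Lambda$ in the definition of $\Sigma_0$.

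With this splitting, $\|\bm e\|_{\mathbb C^{-1}}^2=(\bm e,\nabla^2\xi)+(\mathbb C^{-1}\bm e,\operatorname{sym}\operatorname{curl}\bm\phi)$. For the first term, testing $\bm\sigma\in\Sigma_b$ and $\bm\sigma_h\in\Sigma_{h,b}$ (via Remark~\ref{b:d:sigmahb}) against $\xi\in\Lambda$, and using $\operatorname{div}\operatorname{div}\bm\sigma=f$, $\operatorname{div}\operatorname{div}\bm\sigma_h=\mathcal Q_h f$, yields
\[
(\bm e,\nabla^2\xi)=(f-\mathcal Q_h f,\xi)+\operatorname{R}_b(\xi)-\operatorname{R}_{h,b}(\xi).
\]
The point-value contributions cancel, leaving only the data discrepancies involving $m_b-\pi_h m_b$ on $\Gamma_S\cup\Gamma_F$ and $h_b-\mathcal P_h^{k-1}h_b$ on $\Gamma_F$. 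The orthogonality properties of $\mathcal Q_h$, $\pi_h$, and $\mathcal P_h^{k-1}$ against the appropriate polynomial spaces, combined with Bramble--Hilbert and trace estimates, bound this whole summand by $\operatorname{osc}\,\|\xi\|_2\lesssim\operatorname{osc}\,\|\bm e\|_{\mathbb C^{-1}}$.

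For the second summand I invoke Galerkin orthogonality. A Fortin-type quasi-interpolation $I_h:V_0\to V_{h,0}$, obtained by adapting the operator of \cite{hu2021family} so that it preserves the boundary behaviour defining $V_0$ (normal component constant on each straight segment of $\Gamma_S$, and agreement with the prescribed $RT$ value on each connected component of $\Gamma_F$), produces $\operatorname{sym}\operatorname{curl}I_h\bm\phi\in\Sigma_{h,0}\cap\ker(\operatorname{div}\operatorname{div})$ by Lemma~\ref{rightarrow}. Galerkin orthogonality then allows me to replace $\bm\phi$ by $\bm\psi:=\bm\phi-I_h\bm\phi$. A piecewise integration by parts on $\mathcal T_h$, exploiting that $\operatorname{rot}(\mathbb C^{-1}\bm\sigma)=\operatorname{rot}\nabla^2 w=0$ and that $\nabla^2 w$ has no jumps across interior edges (since $w\in H^2$), reduces the term to
\[
-\sum_{K\in\mathcal T_h}(\operatorname{rot}(\mathbb C^{-1}\bm\sigma_h),\bm\psi)_K + \sum_{e\in\mathcal E_h(\Omega)}\langle\bm\psi,[\![\mathbb C^{-1}\bm\sigma_h\mathbf t_e]\!]\rangle_e + \sum_{e\in\mathcal E_h(\partial\Omega)}\langle\bm\psi,(\mathbb C^{-1}\bm\sigma_h-\nabla^2 w)\mathbf t_e\rangle_e.
\]

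The boundary piece is the delicate part. On $\Gamma_C$ the full data $w_b,g_b$ give $\nabla^2 w\mathbf t=\partial_{tt}w_b\mathbf t+\partial_t g_b\mathbf n$, producing exactly the $\Gamma_C$-contribution to $\eta_1$. On $\Gamma_S$ only the tangential component $\partial_{tt}w_b=\mathbf t^{\mathrm T}\nabla^2 w\mathbf t$ is known, but the construction of $I_h$ forces $\bm\psi\cdot\mathbf n=0$ on each straight segment of $\Gamma_S$, which annihilates the pairing with the unknown normal-tangential component and leaves precisely the $\Gamma_S$-contribution to $\eta_1$. On $\Gamma_F$, $\bm\psi$ vanishes on every connected component, so the boundary residual disappears entirely; this is where the $RT$-trace structure in the definition of $V_0$ becomes essential. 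Standard Cauchy--Schwarz together with the approximation estimates $\|\bm\psi\|_{0,K}\lesssim h_K|\bm\phi|_{1,\omega_K}$ and $\|\bm\psi\|_{0,e}\lesssim h_e^{1/2}|\bm\phi|_{1,\omega_e}$ then give $|(\mathbb C^{-1}\bm e,\operatorname{sym}\operatorname{curl}\bm\phi)|\lesssim\eta_1\,|\bm\phi|_1\lesssim\eta_1\,\|\bm e\|_{\mathbb C^{-1}}$. Combined with the first-term bound this yields $\|\bm e\|_{\mathbb C^{-1}}^2\lesssim(\eta_1+\operatorname{osc})\|\bm e\|_{\mathbb C^{-1}}$, which is the claim. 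The main obstacle I expect is the careful construction of the Fortin-type operator $I_h$ respecting all the boundary conditions defining $V_0$, and the matching verification that the unknown traces of $\nabla^2 w$ on $\Gamma_S\cup\Gamma_F$ are annihilated precisely by $\bm\psi$; this is the direct payoff of the sharp characterisation of $V_0$ in Theorems~\ref{con:exact:mbd}--\ref{Dis:exact:mbd}.
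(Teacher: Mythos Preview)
Your proposal is essentially the paper's proof, just packaged differently. Your Helmholtz-type splitting $\bm e=\mathbb C\nabla^2\xi+\operatorname{sym}\operatorname{curl}\bm\phi$ with $\xi\in\Lambda$ defined by $(\mathbb C\nabla^2\xi,\nabla^2 v)=(\bm e,\nabla^2 v)$ is precisely the paper's auxiliary problem \eqref{auxilary:problem}: one has $\xi=w-\underline w$, $\mathbb C\nabla^2\xi=\bm\sigma-\underline{\bm\sigma}$, and $\operatorname{sym}\operatorname{curl}\bm\phi=\underline{\bm\sigma}-\bm\sigma_h$, so your two summands are exactly the pieces bounded in Lemmas~\ref{relia:bd:1} and~\ref{relia:bd:2}. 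Your treatment of the first term matches Lemma~\ref{relia:bd:1}, and your Galerkin-orthogonality argument with the interpolation $I_h$ matches Lemma~\ref{relia:bd:2}.

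One technical point deserves care. In the second summand you propose a \emph{piecewise} integration by parts on $\mathbb C^{-1}\bm e=\nabla^2 w-\mathbb C^{-1}\bm\sigma_h$ and invoke ``$\nabla^2 w$ has no jumps across interior edges since $w\in H^2$''. For $w$ merely in $H^2(\Omega)$, $\nabla^2 w$ need not have edge traces, so this step is formal as written. The paper sidesteps this by handling the $\nabla^2 w$-part (equivalently the $\underline{\bm\sigma}$-part) \emph{globally}: it uses the identity $(\nabla^2\underline w,\operatorname{sym}\operatorname{curl}\bm\phi)=\operatorname{tr}_b(u)(\operatorname{sym}\operatorname{curl}\bm\phi)$ together with Lemma~\ref{trace:lemma}, which is proved by a single global integration by parts and density, and reserves the piecewise integration by parts for the piecewise-polynomial term $\mathbb C^{-1}\bm\sigma_h$ only. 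Your argument is easily repaired the same way---treat the $\nabla^2 w$-contribution via Lemma~\ref{trace:lemma} rather than elementwise---and then the remainder of your outline (the boundary bookkeeping on $\Gamma_C$, $\Gamma_S$, $\Gamma_F$ via the structure of $V_0$, and the approximation bounds for $I_h$) lines up with the paper's proof verbatim.
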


To prove Theorem \ref{relia:bd:upper}, an auxiliary problem is introduced: Find $\underline{w}\in H^2(\Omega)$ satisfying $\underline{w}|_{\Gamma_C\cup \Gamma_S}=w_b$ and $\partial_n \underline{w}|_{\Gamma_S}=g_b$ such that 
\begin{equation}\label{auxilary:problem}
    (\mathbb C\nabla^2 \underline{w},\nabla^2 v)=(\bm \sigma_h, \nabla^2 v)\quad \text{for all}~~v\in \Lambda.
\end{equation}
Let $\underline{\bm \sigma}=\mathbb C\nabla^2\underline{w}$. The formula \eqref{auxilary:problem} together with Remark \ref{b:d:sigmahb} immediately yields
\begin{equation}\label{aux:mid}
    \begin{aligned}
    (\underline{\bm \sigma}, \nabla^2 v)=(\bm \sigma_h,\nabla^2 v) = (\operatorname{div}\operatorname{div}\bm \sigma_h, v)+R_{h,b}(v)=(\mathcal Q_h f, v)+R_{h,b}(v).
\end{aligned}
\end{equation}
An estimate concerning the boundary data and $f$ is given below.
\begin{lemma}\label{relia:bd:1}
    There holds \[\|\bm \sigma-\underline{\bm \sigma}\|_{\mathbb C^{-1}}\lesssim \operatorname{osc}.\]
\end{lemma}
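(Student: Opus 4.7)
The plan is to test the difference of the primal formulation \eqref{primal:formulation} and the auxiliary identity \eqref{aux:mid} against the admissible function $v = e := w - \underline{w}$ and exploit the built-in orthogonalities of $\mathcal Q_h$, $\pi_h$, and $\mathcal P_h^{k-1}$. Since $w$ and $\underline{w}$ share the same essential boundary data on $\Gamma_C\cup\Gamma_S$, the difference $e$ belongs to $\Lambda$. Subtracting \eqref{aux:mid} from \eqref{primal:formulation} with $v=e$ and using $\bm\sigma-\underline{\bm\sigma}=\mathbb C\nabla^2 e$ together with \eqref{C-norm:equ} yields
\begin{equation*}
\|\bm\sigma-\underline{\bm\sigma}\|_{\mathbb C^{-1}}^2 = (f-\mathcal Q_h f,e) + \bigl(R_b(e)-R_{h,b}(e)\bigr).
\end{equation*}
The point-value contributions at vertices in $\mathcal V_F$ appear identically in both $R_b$ and $R_{h,b}$ (compare \eqref{rb:def} and \eqref{rhb:def}), so they cancel; what remains is
\begin{equation*}
R_b(e)-R_{h,b}(e)=\langle m_b-\pi_h m_b,\partial_n e\rangle_{\Gamma_S\cup\Gamma_F}-\langle h_b-\mathcal P_h^{k-1}h_b,e\rangle_{\Gamma_F}.
\end{equation*}
Each of the three right-hand-side terms will be bounded by $\operatorname{osc}\cdot|e|_2$; invoking $|e|_2\lesssim\|\mathbb C\nabla^2 e\|_0\lesssim\|\bm\sigma-\underline{\bm\sigma}\|_{\mathbb C^{-1}}$ from \eqref{C-norm:equ} and dividing through concludes the proof.

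For the volume residual, the orthogonality $(f-\mathcal Q_h f,\mathcal Q_h e)=0$ combined with the standard elementwise estimate $\|e-\mathcal Q_h e\|_{0,K}\lesssim h_K^2|e|_{2,K}$, valid since $P_1(K)\subset U_h|_K$ for $k\geq 3$, yields $|(f-\mathcal Q_h f,e)|\lesssim\operatorname{osc}(f,\mathcal T_h)|e|_2$ by Cauchy--Schwarz. For the moment residual, the defining orthogonality of $\pi_e$ against $P_{k-2}(e)$ from \eqref{mb:inter} allows the substitution $\langle m_b-\pi_h m_b,\partial_n e\rangle_e=\langle m_b-\pi_h m_b,\partial_n e-q_e\rangle_e$ for any $q_e\in P_{k-2}(e)$; choosing $q_e$ as the best $L^2$ approximant and combining a scaled trace inequality with the Bramble--Hilbert lemma gives $\|\partial_n e-q_e\|_{0,e}\lesssim h_e^{1/2}|e|_{2,\omega_e}$, so that the sum is controlled by $\operatorname{osc}(m_b,\mathcal E_h(\Gamma_S\cup\Gamma_F))|e|_2$. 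The shear residual on $\Gamma_F$ is handled identically, using $L^2$-orthogonality of $\mathcal P_h^{k-1}$ against $P_{k-1}(e)$ and the trace approximation $\|e-q_e\|_{0,e}\lesssim h_e^{3/2}|e|_{2,\omega_e}$ to match the $h_e^3$ weight carried by $\operatorname{osc}(h_b,\mathcal E_h(\Gamma_F))$.

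The only mildly delicate step is verifying the two fractional-order trace approximation estimates at the precise scalings $h_e^{1/2}$ and $h_e^{3/2}$. These are standard consequences of scaling to a reference patch combined with the multiplicative trace inequality $\|\cdot\|_{0,e}^2\lesssim h_e^{-1}\|\cdot\|_{0,\omega_e}^2+h_e\|\cdot\|_{0,\omega_e}|\cdot|_{1,\omega_e}$, but care is required so that shape regularity is invoked uniformly and the overlap multiplicity of the patches $\omega_e$ is bounded, which is what permits the edge-by-edge Cauchy--Schwarz assembly to be controlled by the global $H^2$ seminorm of $e$ and the $\ell^2$ assembly of the oscillation contributions. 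Once these scaled estimates are recorded, the remainder of the argument is a direct application of Cauchy--Schwarz followed by absorption of $|e|_2$ into $\|\bm\sigma-\underline{\bm\sigma}\|_{\mathbb C^{-1}}$.
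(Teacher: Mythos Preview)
Your proposal is correct and follows essentially the same route as the paper's proof: both test the difference of \eqref{primal:formulation} and \eqref{aux:mid} with $v=w-\underline{w}\in\Lambda$, obtain the identity $\|\bm\sigma-\underline{\bm\sigma}\|_{\mathbb C^{-1}}^2=(f-\mathcal Q_hf,v)+R_b(v)-R_{h,b}(v)$, and then exploit the orthogonalities of $\mathcal Q_h$, $\pi_h$, and $\mathcal P_h^{k-1}$ to insert low-order polynomial approximations and recover the correct $h$-weights via trace and Poincar\'e-type estimates. The only cosmetic difference is that the paper subtracts the concrete quantities $\mathcal Q_h^1 v$, $\bm\varphi_{0,e}\cdot\mathbf n$, and $v_{1,e}$ (projections onto $P_1(K)$, $P_0(K;\mathbb R^2)$, $P_1(K)$) rather than your slightly more general edge polynomials, but the resulting bounds are identical.
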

\begin{proof}
For $w$ and $\underline{w}$ solving the primal formulation \eqref{primal:formulation} and the auxiliary problem \eqref{auxilary:problem} respectively, let $v:= w-\underline{w}\in \Lambda$ and $|v|_2 = \|\nabla^2 w-\nabla^2\underline{w}\|_0\lesssim\|\bm \sigma-\underline{\bm \sigma}\|_{\mathbb C^{-1}}$. Besides, \eqref{primal:formulation} and \eqref{auxilary:problem}--\eqref{aux:mid} lead to
\begin{equation}\label{osc:estimate:0}
    \begin{aligned}
     \|\bm \sigma - \underline{\bm \sigma}\|_{\mathbb C^{-1}}^2&=(\mathbb C(\nabla^2 w-\nabla^2 \underline{w}), \nabla^2v)=(\mathbb C \nabla^2 w, \nabla^2 v)-(\mathbb C \nabla^2 \underline{w}, \nabla^2 v)\\
     &=(f-Q_hf, v)+R_b(v)-R_{h,b}(v).
\end{aligned}
\end{equation}

Subtracting \eqref{rhb:def} from \eqref{rb:def} yields
\begin{align*}
    R_b(v)-R_{h,b}(v) &= \sum_{e\in \mathcal E_h(\Gamma_S\cup\Gamma_F)}\langle m_b-\pi_h m_b,\partial_n v\rangle_{e}-\sum_{e\in \mathcal E_h(\Gamma_F)}\langle h_b-\mathcal P^{k-1}_h h_b,v\rangle_{e}.
\end{align*}
For $e\in \mathcal E_h(K)\cap \mathcal E_h(\Gamma_S\cup \Gamma_F)$, let $\bm \varphi_{0,e} $ be the $L^2$ projection of $\nabla v$ onto $P_0(K;\mathbb R^2)$. The Cauchy-Schwarz inequality and the trace inequality lead to
\begin{align*}
    \langle m_b-\pi_h m_b,\partial_n v\rangle_{e}&=\langle m_b-\pi_h m_b,\partial_n v-\bm \varphi_{0,e}
\cdot\mathbf n\rangle_{e}\\
    &\lesssim \|m_b-\pi_h m_b\|_{0,e}\|\nabla v-\bm \varphi_{0,e}\|_{0,e}
    \lesssim h_e^{\frac{1}{2}}\|m_b-\pi_h m_b\|_{0,e}|v|_{2,K}.
\end{align*}
Similarly, for $e\in \mathcal E_h(K)\cap \mathcal E_h(\Gamma_F)$, one can get 
\begin{align*}
     \langle h_b-\mathcal P^{k-1}_h h_b,v\rangle_{e}&=\langle h_b-\mathcal P^{k-1}_h h_b,v-v_{1,e}\rangle_{e}\\
     &\lesssim \|h_b-\mathcal P^{k-1}_h h_b\|_{0,e}\|v-v_{1,e}\|_{0,e}
    \lesssim h_e^{\frac{3}{2}}\|h_b-\mathcal P^{k-1}_h h_b\|_{0,e}|v|_{2,K}
\end{align*}
with $v_{1,e}$ being the $L^2$ projection of $v$ onto $P_1(K)$. Therefore, 
\begin{equation}\label{rb-rhb}
    R_b(v)-R_{h,b}(v)\lesssim\left(\operatorname{osc}(m_b,\mathcal E_h(\Gamma_S\cup \Gamma_F))+\operatorname{osc}(h_b,\mathcal E_h(\Gamma_F))\right)|v|_2.
\end{equation}

Furthermore, the Cauchy-Schwarz inequality shows that 
\[(f-\mathcal Q_h f,v)=(f-\mathcal Q_hf,v-\mathcal Q_h^1 v)\lesssim\left(\sum_{K\in \mathcal T_h}h_K^4\|f-\mathcal Q_h f\|_{0,K}^2\right)^{\frac{1}{2}}|v|_2.\]
Substituting this and \eqref{rb-rhb} into \eqref{osc:estimate:0} concludes
\begin{equation}
    \|\bm \sigma-\underline{\bm \sigma}\|_{\mathbb C^{-1}}\lesssim \operatorname{osc}(f,\mathcal T_h)+\operatorname{osc}(m_b,\mathcal E_h(\Gamma_S\cup \Gamma_F))+\operatorname{osc}(h_b,\mathcal E_h(\Gamma_F)).
\end{equation}
\end{proof}

To estimate $\|\underline{\bm \sigma}-\bm \sigma_h\|_{\mathbb C^{-1}}$, a representation formula of $\operatorname{tr}_b(u)(\operatorname{sym}\operatorname{curl}\bm \phi)$ is provided.  

\begin{lemma}\label{trace:lemma}
    For $\bm \phi \in V_0$ satisfying $\bm \phi\cdot\mathbf n|_{\Gamma_S}=0$ and $\bm \phi|_{\Gamma_F}=\bm 0$, there holds
    \begin{align*}
        \operatorname{tr}_{b}(u)(\operatorname{sym}\operatorname{curl}\bm \phi) = -\langle \partial_{tt} w_{b}\mathbf t+\partial_{t}g_b\mathbf n, \bm \phi\rangle_{\Gamma_C}-\langle\partial_{tt} w_{b}, \bm \phi\cdot \mathbf t\rangle_{\Gamma_S}.
    \end{align*}
\end{lemma}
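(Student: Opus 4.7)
The plan is to exploit $\operatorname{div}\operatorname{div}(\operatorname{sym}\operatorname{curl}\bm\phi)=0$ to collapse $\operatorname{tr}_b(u)$ to a single volume pairing, integrate by parts twice, and feed in the boundary data.

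First I would observe that $\bm\phi\in V_0$ implies, via Theorem \ref{con:exact:mbd}, that $\operatorname{sym}\operatorname{curl}\bm\phi\in\Sigma_0$ with $\operatorname{div}\operatorname{div}(\operatorname{sym}\operatorname{curl}\bm\phi)=0$. From \eqref{trace:gammaD} and the symmetry of $\nabla^2 u$,
$$\operatorname{tr}_{b}(u)(\operatorname{sym}\operatorname{curl}\bm\phi)=(\operatorname{sym}\operatorname{curl}\bm\phi,\nabla^2 u)=(\operatorname{curl}\bm\phi,\nabla^2 u).$$
Since each row of $\operatorname{curl}\bm\phi$ is divergence-free and $(\operatorname{curl}\bm\phi)\mathbf n=\partial_t\bm\phi$ (as in the proof of \eqref{symcurlv:2}), a first integration by parts yields $(\operatorname{curl}\bm\phi,\nabla^2 u)=\langle\partial_t\bm\phi,\nabla u\rangle_{\partial\Omega}$, interpreted as the $H^{-1/2}$--$H^{1/2}$ duality pairing on each straight segment.

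Next I would split the boundary into $\Gamma_C$, $\Gamma_S$ and $\Gamma_F$. The $\Gamma_F$-contribution vanishes because $\bm\phi|_{\Gamma_F}=\mathbf 0$ implies $\partial_t\bm\phi|_{\Gamma_F}=\mathbf 0$. On each straight edge $E\subset\Gamma_C\cup\Gamma_S$, with $\mathbf n$ and $\mathbf t$ constant, I would decompose
$$\partial_t\bm\phi\cdot\nabla u=\partial_t(\bm\phi\cdot\mathbf t)\,\partial_t u+\partial_t(\bm\phi\cdot\mathbf n)\,\partial_n u,$$
substitute $\partial_t u=\partial_t w_b$ on $\Gamma_C\cup\Gamma_S$ and $\partial_n u=g_b$ on $\Gamma_C$, and use the hypothesis $\bm\phi\cdot\mathbf n=0$ on $\Gamma_S$ to annihilate the otherwise uncontrolled $\partial_n u$ factor there. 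A second edge-wise integration by parts then moves the remaining $\partial_t$ off $\bm\phi$ and produces precisely the integrands $\partial_{tt}w_b\,\mathbf t+\partial_t g_b\,\mathbf n$ against $\bm\phi$ on $\Gamma_C$ and $\partial_{tt}w_b$ against $\bm\phi\cdot\mathbf t$ on $\Gamma_S$, with the correct sign.

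The main obstacle will be showing that every vertex term generated by the second integration by parts cancels. After a density reduction to smooth $\bm\phi$ satisfying the prescribed trace conditions (both sides of the identity depend continuously on $\bm\phi$ in the $H^1$-norm), I would treat the corners case by case: at an interior vertex of $\Gamma_C$ the assumed compatibility $[\![g_b\mathbf n+\partial_t w_b\mathbf t]\!]_{\mathbf x}=0$ makes the vertex jump trivially zero; at an interior vertex of $\Gamma_S$ the two conditions $\bm\phi\cdot\mathbf n_\pm=0$ on the adjacent edges with linearly independent normals force $\bm\phi(\mathbf x)=\mathbf 0$; at any junction of $\Gamma_F$ with another boundary type the hypothesis $\bm\phi|_{\Gamma_F}=\mathbf 0$ directly yields $\bm\phi(\mathbf x)=\mathbf 0$; and at a $\overline\Gamma_C\cap\overline\Gamma_S$ vertex I would use that the $H^{1/2}$-trace of $\nabla w_b$ inherited from $u\in H^2(\Omega)$ is single-valued on $\partial\Omega$, so that the $\Gamma_C$ vertex term $\bm\phi(\mathbf x)\cdot(\partial_t w_b\,\mathbf t_C+g_b\,\mathbf n_C)=\bm\phi(\mathbf x)\cdot\nabla w_b(\mathbf x)$ exactly matches the $\Gamma_S$ vertex term $(\bm\phi\cdot\mathbf t_S)(\mathbf x)\,\partial_t w_b|_{\Gamma_S}(\mathbf x)$ (using $\bm\phi\cdot\mathbf n_S=0$ on $\Gamma_S$), and the opposite orientation signs from the two edges meeting at $\mathbf x$ then cancel them. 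Once every corner term has been eliminated, only the asserted line integrals remain.
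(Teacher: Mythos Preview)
Your argument is correct and shares the paper's core strategy: reduce $\operatorname{tr}_b(u)(\operatorname{sym}\operatorname{curl}\bm\phi)$ to $(\operatorname{curl}\bm\phi,\nabla^2 u)$ via $\operatorname{div}\operatorname{div}(\operatorname{sym}\operatorname{curl}\bm\phi)=0$, integrate by parts, substitute the boundary data, and close with density. The only substantive difference is the direction of the volume integration by parts. You use $\operatorname{div}(\operatorname{curl}\bm\phi)=0$ to obtain $\langle\partial_t\bm\phi,\nabla u\rangle_{\partial\Omega}$ and then perform a second, edge-wise integration by parts to move $\partial_t$ onto the data, which forces the corner-by-corner cancellation analysis. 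The paper instead uses $\operatorname{rot}(\nabla^2 u)=0$ to obtain directly
\[
(\operatorname{curl}\bm\phi,\nabla^2 u)=-\langle\bm\phi,\partial_t(\nabla u)\rangle_{\partial\Omega},
\]
so the tangential derivative already sits on $\nabla u=\partial_t u\,\mathbf t+\partial_n u\,\mathbf n$, and the substitution of $u|_{\Gamma_C\cup\Gamma_S}=w_b$, $\partial_n u|_{\Gamma_C}=g_b$ together with $\bm\phi\cdot\mathbf n|_{\Gamma_S}=0$ and $\bm\phi|_{\Gamma_F}=0$ yields the result with no vertex terms whatsoever. Your vertex discussion (interior $\Gamma_C$ via the compatibility condition, interior $\Gamma_S$ via independent normals, $\Gamma_F$ endpoints via $\bm\phi=\mathbf 0$, and $\overline\Gamma_C\cap\overline\Gamma_S$ via single-valuedness of $\nabla u$) is sound, but it is entirely avoidable by choosing the other integration-by-parts identity.
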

\begin{proof}
For $\bm \phi\in V_0\cap C^\infty(\overline{\Omega})$, since $\operatorname{sym}\operatorname{curl}\bm \phi\in \Sigma_0$, substituting it into $\bm \tau$ in \eqref{trace:gammaD} gives rise to
\begin{equation}\label{trace:all:terms}
       \begin{aligned}
           \operatorname{tr}_{b}(u)(\operatorname{sym}\operatorname{curl}\bm \phi) = (\operatorname{sym}\operatorname{curl}\bm \phi, \nabla^2 u)=  (\operatorname{curl}\bm \phi, \nabla^2 u).
       \end{aligned}
   \end{equation}
An integration by parts combined with the given data $u|_{\Gamma_C\cup \Gamma_S} = w_b$ and $\partial_n u |_{\Gamma_C} = g_b$ leads to
\begin{align*}
    (\operatorname{curl}\bm \phi, \nabla^2 u) &= -\langle\bm \phi, \partial_t(\nabla u)\rangle_{\partial\Omega}=-\langle\bm \phi, \partial_t(\partial_t u)\mathbf t+\partial_t(\partial_n u)\mathbf n\rangle_{\partial\Omega}\\
    &= -\langle\bm \phi, \partial_{tt} w_b\mathbf t+\partial_t g_b\mathbf n\rangle_{\Gamma_C}- \langle\bm \phi\cdot\mathbf t, \partial_{tt} w_b\rangle_{\Gamma_S}-\langle\bm \phi, \partial_t(\nabla u)\rangle_{\Gamma_F}-\langle\bm \phi\cdot\mathbf n, \partial_{tn} u\rangle_{\Gamma_S}\\
    &=-\langle\bm \phi, \partial_{tt} w_b\mathbf t+\partial_t g_b\mathbf n\rangle_{\Gamma_C}- \langle\bm \phi\cdot\mathbf t , \partial_{tt} w_b\rangle_{\Gamma_S}.
\end{align*}
The last equality holds because $\bm \phi\cdot\mathbf n$ and $\bm \phi$ vanish on $\Gamma_S$ and $\Gamma_F$, respectively. Finally, a density argument concludes the proof. 
\end{proof}

For $\bm \phi \in V_0$, a boundary-preserving interpolation operator $I_h: V_0\rightarrow V_{h,0}$ is introduced, which serves as a tool for the proof of reliability. Denote the degrees of freedom of $V_h$, i.e., \eqref{v:dof:1}--\eqref{v:dof:4}, as $\mathcal N_h=\{\bm \phi\rightarrow D_i\bm \phi: i = 1, \cdots, N\}$, and let $\{\bm \varphi_i: i = 1, \cdots, N\}$ be the corresponding basis of $V_h$. Each of the differential operators involved in $D_i \bm \phi$ has order $|D_i|$ equal to zero or one. Following the approach presented in \cite[Section 4]{girault2002hermite}, define 
\begin{equation}\label{int:v0-vh0}
    I_h \bm \phi = \sum_{k=1}^N \left(\int_{\kappa_i}D_i\bm \phi \, \bm \psi_{i}^{\kappa_i}b_{\kappa_i}d\mu(\kappa_i)\right)\bm \varphi_i, 
\end{equation}
where $\psi_i^{\kappa_i}$ denotes Riesz's representation function corresponding to the $i$-th degree of freedom $D_i\bm \phi$ which is supported on $\kappa_i$, $b_{\kappa_i}$ is a bubble function on $\kappa_i$, and $d\mu(\kappa_i)$ represents the Lebesgue measure on $\kappa_i$.

To perform the integration in \eqref{int:v0-vh0} with respect to the degrees of freedom \eqref{v:dof:1}--\eqref{v:dof:4}, $\kappa_i$ is chosen as either a triangle or an edge, depending on the type of degrees of freedom, as follows:
\begin{enumerate}
    \item For the value and gradient at a vertex $\mathbf x\in \mathcal V_h(\Omega)$ (i.e., \eqref{v:dof:1}), $\kappa_i$ is chosen as a triangle $K\in\mathcal T_h$ sharing the vertex $\mathbf x$;
    \item For the moments of the function on an edge $e\in \mathcal E_h$ (i.e., \eqref{v:dof:2}), $\kappa_i$ is chosen as the edge $e$, while for the moments of its divergence on $e\in\mathcal E_h$ (i.e., \eqref{v:dof:3}), $\kappa_i$ is chosen as $K\in\mathcal T_h$ sharing the edge $e$;
    \item  For the interior moments on a triangle $K\in \mathcal T_h$(i.e., \eqref{v:dof:4}), $\kappa_i$ is chosen as the triangle $K$.
\end{enumerate}
Moreover, to preserve the boundary conditions:
\begin{enumerate}[start = 4]
    \item For the value and the tangential component of the gradient at $\mathbf x\in \mathcal V_h(\Gamma_F)$, $\kappa_i$ is chosen as an edge $e\in\mathcal E_h(\Gamma_F)$ sharing the vertex $\mathbf x$; 
    \item For the normal component and the tangential derivative of the normal component at $\mathbf x\in \mathcal V_h(\Gamma_S)\backslash \mathcal V_h(\Gamma_F)$, $\kappa_i$ is chosen as an edge $e\in \mathcal E_h(\Gamma_S)$ sharing the vertex $\mathbf x$; 
    \item  For all remaining vertex-associated degrees of freedom, $\kappa_i$ is chosen as a triangle $K\in\mathcal T_h$ sharing the vertex; 
\end{enumerate}


The interpolation operator $I_h$ is well-defined due to the unisolvence of the degrees of freedom of $V_{h}$ established in \cite[Theorem 2.6]{hu2021family}. Furthermore, for any $\bm \phi \in V_0$, $I_h\bm \phi\in V_{h,0}$, and satisfies the following error estimates
\begin{equation}\label{estimate:phi}
    \|\bm \phi-I_h\bm \phi\|_{0,K} \lesssim h_K |\bm \phi|_{1,S(K)}, \quad \|\bm \phi-I_h\bm \phi\|_{0,e} \lesssim h_e^\frac{1}{2}|\bm \phi|_{1,S(\omega_e)},
\end{equation}
where $\omega_e$ denotes the patch of elements containing edge $e$ and $S(X):=\operatorname{int}(\cup\{K^\prime\in\mathcal T_{h}: \, \operatorname{dist}(X,K^\prime)=0\})$.

\begin{lemma}\label{relia:bd:2}There holds
\begin{equation}
\begin{aligned}
     \|\underline{\bm \sigma}-\bm \sigma_h\|_{\mathbb C^{-1}} \lesssim \eta_1.
\end{aligned}
\end{equation}
\end{lemma}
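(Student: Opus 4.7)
The plan is to control $\bm\tau:=\underline{\bm\sigma}-\bm\sigma_h$ through three ingredients: the continuous divdiv complex with boundary conditions, a Galerkin orthogonality against the boundary-preserving interpolant $I_h\bm\phi$, and an elementwise integration by parts whose residuals exactly match $\eta_1$. The first step is to show $\bm\tau\in\Sigma_0\cap\ker(\operatorname{div}\operatorname{div})$. The second row of \eqref{mixed:bd:dis} gives $\operatorname{div}\operatorname{div}\bm\sigma_h=\mathcal Q_h f$ (since $\operatorname{div}\operatorname{div}\bm\sigma_h\in U_h$), while testing \eqref{aux:mid} against $v\in C_0^\infty(\Omega)$ gives $\operatorname{div}\operatorname{div}\underline{\bm\sigma}=\mathcal Q_h f$; combined with Remark \ref{b:d:sigmahb}, the identity \eqref{aux:mid} produces $(\bm\tau,\nabla^2 v)-(\operatorname{div}\operatorname{div}\bm\tau,v)=0$ for every $v\in\Lambda$ together with $\operatorname{div}\operatorname{div}\bm\tau=0$. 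Theorem \ref{con:exact:mbd} (via Lemma \ref{leftarrow}) then supplies $\bm\phi\in V_0$ with $\operatorname{sym}\operatorname{curl}\bm\phi=\bm\tau$ and $|\bm\phi|_1\lesssim\|\bm\tau\|_0\lesssim\|\bm\tau\|_{\mathbb C^{-1}}$.

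The next step is to split
\begin{equation*}
\|\bm\tau\|_{\mathbb C^{-1}}^2=(\mathbb C^{-1}\bm\tau,\operatorname{sym}\operatorname{curl}(\bm\phi-I_h\bm\phi))+(\mathbb C^{-1}\bm\tau,\operatorname{sym}\operatorname{curl}I_h\bm\phi)
\end{equation*}
and to kill the last term. Since $I_h\bm\phi\in V_{h,0}$, Lemma \ref{rightarrow} gives $\bm\tau_h:=\operatorname{sym}\operatorname{curl}I_h\bm\phi\in\Sigma_{h,0}$ with $\operatorname{div}\operatorname{div}\bm\tau_h=0$. Testing the first row of \eqref{mixed:bd:dis} against $\bm\tau_h$ yields $(\mathbb C^{-1}\bm\sigma_h,\bm\tau_h)=\operatorname{tr}_b(u)(\bm\tau_h)$. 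Because $\underline w$ shares the essential data of $u$ and $\bm\tau_h\in\Sigma_0$, the value of $\operatorname{tr}_b(\cdot)(\bm\tau_h)$ is independent of the choice of the lifting; taking $\underline w$ and invoking $\operatorname{div}\operatorname{div}\bm\tau_h=0$ in \eqref{trace:gammaD} identifies the right-hand side with $(\nabla^2\underline w,\bm\tau_h)=(\mathbb C^{-1}\underline{\bm\sigma},\bm\tau_h)$, so $(\mathbb C^{-1}\bm\tau,\bm\tau_h)=0$.

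With $\bm\eta:=\bm\phi-I_h\bm\phi$, the symmetry of $\mathbb C^{-1}\bm\tau$ allows the remaining term to be rewritten as $(\mathbb C^{-1}\bm\tau,\operatorname{curl}\bm\eta)$, and elementwise integration by parts gives a volume term $\sum_K(\operatorname{rot}(\mathbb C^{-1}\bm\tau),\bm\eta)_K$ plus edge contributions $-\sum_K\langle\mathbb C^{-1}\bm\tau\,\mathbf t,\bm\eta\rangle_{\partial K}$. Since $\operatorname{rot}\nabla^2\underline w\equiv 0$ and $\nabla^2\underline w\,\mathbf t_e$ has no interior jump (the trace of $\nabla\underline w$ is single-valued as $\underline w\in H^2$), only the $\mathbb C^{-1}\bm\sigma_h$-contributions survive in the interior, giving exactly $\sum_K(\operatorname{rot}(\mathbb C^{-1}\bm\sigma_h),\bm\eta)_K$ and $\sum_{e\in\mathcal E_h(\Omega)}\langle[\![\mathbb C^{-1}\bm\sigma_h\mathbf t_e]\!],\bm\eta\rangle_e$. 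For the boundary residual $\langle\mathbb C^{-1}\bm\sigma_h\,\mathbf t-\partial_t(\nabla\underline w),\bm\eta\rangle_{\partial\Omega}$, I would use straight-edge geometry and the given data: on $\Gamma_C$, $\partial_t(\nabla\underline w)=\partial_{tt}w_b\,\mathbf t+\partial_t g_b\,\mathbf n$, reproducing the third sum in $\eta_1$; on $\Gamma_S$, the preservation by $I_h$ of the piecewise-constant normal trace and of vertex values forces $\bm\eta\cdot\mathbf n\equiv 0$, and $\mathbf t^{{\mathrm T}}\nabla^2\underline w\,\mathbf t=\partial_{tt}w_b$ reproduces the fourth sum; on each $\Gamma_{F_j}$, the preservation by $I_h$ of the RT trace of $\bm\phi$ forces $\bm\eta|_{\Gamma_{F_j}}=\bm 0$. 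Cauchy--Schwarz with \eqref{estimate:phi} and $|\bm\phi|_1\lesssim\|\bm\tau\|_{\mathbb C^{-1}}$ then yields $\|\bm\tau\|_{\mathbb C^{-1}}^2\lesssim\eta_1\|\bm\tau\|_{\mathbb C^{-1}}$.

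The main obstacle is the boundary accounting. One must verify that the deliberate support choices in \eqref{int:v0-vh0} make $I_h\bm\phi$ inherit the full boundary structure of $V_0$: on each straight segment of $\Gamma_S$ the normal trace must remain constant and equal to $\bm\phi\cdot\mathbf n$ at the endpoints, and on each $\Gamma_{F_j}$ the trace must coincide with the RT piece $\bm r_j$ of $\bm\phi$. Because an RT field on a connected polygonal curve is determined by its vertex values and a constant normal component, vertex preservation together with the boundary-oriented choice of $\kappa_i$ suffices. Without this exact preservation, uncontrolled boundary residuals on $\Gamma_S\cup\Gamma_F$ that are not captured by $\eta_1$ would appear, and the identification of $\nabla^2\underline w\,\mathbf t$ with the data on $\Gamma_C\cup\Gamma_S$ is what permits the remaining boundary contributions to be absorbed into $\eta_1$.
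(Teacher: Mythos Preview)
Your proposal follows the same route as the paper: exploit Lemma~\ref{leftarrow} to write $\underline{\bm\sigma}-\bm\sigma_h=\operatorname{sym}\operatorname{curl}\bm\phi$ with $\bm\phi\in V_0$ and $|\bm\phi|_1\lesssim\|\underline{\bm\sigma}-\bm\sigma_h\|_{\mathbb C^{-1}}$, obtain Galerkin orthogonality against $\operatorname{sym}\operatorname{curl}I_h\bm\phi\in\Sigma_{h,0}$ via the first row of \eqref{mixed:bd:dis}, integrate by parts to reveal the residuals that constitute $\eta_1$, and close with \eqref{estimate:phi}. You also correctly isolate the structural point that drives the boundary accounting---namely that the choices of $\kappa_i$ in the construction of $I_h$ force $(\bm\phi-I_h\bm\phi)\cdot\mathbf n|_{\Gamma_S}=0$ and $(\bm\phi-I_h\bm\phi)|_{\Gamma_F}=\bm 0$, which is precisely what the paper uses when it invokes Lemma~\ref{trace:lemma}.

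One step needs tightening. You integrate by parts \emph{elementwise} on the full quantity $\mathbb C^{-1}\bm\tau=\nabla^2\underline w-\mathbb C^{-1}\bm\sigma_h$ and then argue that ``$\nabla^2\underline w\,\mathbf t_e$ has no interior jump.'' Since $\underline w\in H^2(\Omega)$ only, $\nabla^2\underline w$ is merely in $L^2$ and does not possess elementwise edge traces, so the interior-jump statement is not well-posed and the elementwise formula $\sum_K(\operatorname{rot}(\mathbb C^{-1}\bm\tau),\bm\eta)_K-\sum_K\langle\mathbb C^{-1}\bm\tau\,\mathbf t,\bm\eta\rangle_{\partial K}$ is not justified for the $\underline{\bm\sigma}$ part. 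The paper avoids this by separating the two pieces \emph{before} integrating by parts: for $\underline{\bm\sigma}$ it uses the global identity
\[
(\mathbb C^{-1}\underline{\bm\sigma},\operatorname{sym}\operatorname{curl}\,\cdot\,)=(\nabla^2\underline w,\operatorname{sym}\operatorname{curl}\,\cdot\,)=\operatorname{tr}_b(u)(\operatorname{sym}\operatorname{curl}\,\cdot\,),
\]
which follows directly from \eqref{trace:gammaD} together with $\operatorname{div}\operatorname{div}\operatorname{sym}\operatorname{curl}=0$ and requires no elementwise regularity; this is then evaluated via Lemma~\ref{trace:lemma} applied to $\bm\phi-I_h\bm\phi$. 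Only the piecewise-polynomial term $\mathbb C^{-1}\bm\sigma_h$ is integrated by parts element by element. With this separation your argument becomes rigorous and coincides line by line with the paper's proof.
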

\begin{proof}
The auxiliary problem \eqref{auxilary:problem} shows that $\operatorname{div}\operatorname{div}(\underline{\bm \sigma}-\bm \sigma_{h})=0$. Since $\underline{\bm \sigma}-\bm \sigma_{h}\in \Sigma_{0}$, by Lemma \ref{leftarrow}, there exists a function $\bm \phi\in V_0$ such that $\operatorname{sym}\operatorname{curl}\bm \phi=\underline{\bm \sigma}-\bm \sigma_{h}$ and $|\bm \phi|_{1}\lesssim \|\underline{\bm \sigma}-\bm\sigma_{h}\|_{0}$. The auxiliary problem \eqref{auxilary:problem} and the definition of $\operatorname{tr}_{b}$ in \eqref{trace:gammaD} show
\begin{equation*}
(\mathbb C^{-1}\underline{\bm\sigma}, \operatorname{sym}\operatorname{curl}\bm \phi) =(\nabla^2\underline{w}, \operatorname{sym}\operatorname{curl}\bm \phi)= \operatorname{tr}_{b}(\underline{w})(\operatorname{sym}\operatorname{curl}\bm \phi)=\operatorname{tr}_{b}(u)(\operatorname{sym}\operatorname{curl}\bm \phi).
\end{equation*}
Consequently, 
\begin{equation}\label{sigmawan:minus:sigmah}
   \begin{aligned}
     \|\underline{\bm \sigma}-\bm \sigma_h\|^2_{\mathbb C^{-1}} &= \sum_{K\in \mathcal T_h}(\mathbb C^{-1}(\underline{\bm \sigma}-\bm \sigma_h), \operatorname{sym}\operatorname{curl}\bm \phi)_{K}\\
     &= \operatorname{tr}_{b}(u)(\operatorname{sym}\operatorname{curl}\bm \phi)-\sum_{K\in \mathcal T_h}(\mathbb C^{-1} \bm \sigma_h, \operatorname{sym}\operatorname{curl}\bm \phi)_{K}.
\end{aligned} 
\end{equation}
For $I_h\bm \phi\in V_{h,0}$, Lemma \ref{rightarrow} shows $\operatorname{sym}\operatorname{curl} I_h \bm \phi \in \Sigma_{h,0}$. 
The substitution of $\operatorname{sym}\operatorname{curl} I_h \bm \phi$ into $\bm \tau_h$ in the first equation of the discrete variational formulation \eqref{mixed:bd:dis} leads to
\begin{equation*}
    (\mathbb C^{-1}\bm \sigma_h, \operatorname{sym}\operatorname{curl}I_h\bm \phi) =  \operatorname{tr}_{b}(u)(\operatorname{sym}\operatorname{curl}I_h\bm \phi).
\end{equation*}
Inserting a term $I_h\bm \phi$ into the right hand side of \eqref{sigmawan:minus:sigmah} gives rise to 
\begin{equation}
  \begin{aligned}
    \|\underline{\bm \sigma}-\bm \sigma_h\|^2_{\mathbb C^{-1}}&= -\sum_{K\in \mathcal T_h}(\mathbb C^{-1} \bm \sigma_h, \operatorname{sym}\operatorname{curl}(\bm \phi-I_h \bm \phi))_{K}\\
    &+\operatorname{tr}_{b}(u)(\operatorname{sym}\operatorname{curl}(\bm \phi-I_h\bm \phi)).\label{sigmawan:minus:sigmah:1}
\end{aligned}  
\end{equation}
 Due to $(\bm \phi-I_h\bm \phi) \in V_0$ satisfying $(\bm \phi-I_h\bm \phi)\cdot\mathbf n|_{\Gamma_S}=0$ and $(\bm \phi-I_h\bm \phi)|_{\Gamma_F}=\bm 0$, Lemma \ref{trace:lemma} results in 
\begin{equation}
    \label{bd:sum}
    \begin{aligned}
       &\operatorname{tr}_{b}(u)(\operatorname{sym}\operatorname{curl}(\bm \phi-I_h \bm \phi))\\
       &=-\langle \partial_{tt} w_{b}\mathbf t+\partial_{t}g_b\mathbf n, \bm \phi-I_h \bm \phi\rangle_{\Gamma_C}-\langle \partial_{tt} w_{b}, (\bm \phi-I_h \bm \phi)\cdot \mathbf t\rangle_{\Gamma_S}.
    \end{aligned}
\end{equation}
Furthermore, the symmetry of $\bm \sigma_h$ and an integration by parts lead to
\begin{align*}
 &-\sum_{K\in \mathcal T_h}(\mathbb C^{-1}\bm \sigma_h, \operatorname{sym}\operatorname{curl}(\bm \phi-I_h \bm \phi))_{K}=-\sum_{K\in \mathcal T_h}(\mathbb C^{-1}\bm \sigma_h, \operatorname{curl}(\bm \phi-I_h\bm \phi))_{K}\\
    & =\sum_{K\in \mathcal T_h}\langle\mathbb C^{-1}\bm \sigma_h\mathbf t, \bm \phi-I_h\bm \phi\rangle_{\partial K} - \sum_{K\in \mathcal T_h}(\operatorname{rot}(\mathbb C^{-1}\bm \sigma_h), \bm \phi-I_h\bm \phi)_{K}\\
    & = \sum_{e\in \mathcal E_h(\Omega)}\langle[\![\mathbb C^{-1}\bm \sigma_h\mathbf t_e]\!], \bm \phi-I_h\bm \phi\rangle_e+\sum_{e\in\mathcal E_h(\Gamma_C)}\langle\mathbb C^{-1}\bm \sigma_h\mathbf t, \bm \phi-I_h\bm \phi\rangle_e\\
    &+\sum_{e\in\mathcal E_h(\Gamma_S)}\langle\mathbf t^{\mathrm T}\mathbb C^{-1}\bm \sigma_h\mathbf t, (\bm \phi-I_h\bm \phi)\cdot \mathbf t\rangle_e- \sum_{K\in \mathcal T_h}(\operatorname{rot}(\mathbb C^{-1}\bm \sigma_h), \bm \phi-I_h\bm \phi)_{K}.
\end{align*}
Substituting this and \eqref{bd:sum} back into \eqref{sigmawan:minus:sigmah:1} results in
\begin{align*}
    \|\underline{\bm \sigma}-\bm \sigma_h\|^2_{\mathbb C^{-1}} &=\sum_{e\in \mathcal E_h(\Omega)}\langle[\![\mathbb C^{-1}\bm \sigma_h\mathbf t_e]\!], \bm \phi-I_h\bm \phi\rangle_e-\sum_{K\in \mathcal T_h}(\operatorname{rot}(\mathbb C^{-1}\bm \sigma_h), \bm \phi-I_h\bm \phi)_{K}\\
    &+\sum_{e\in\mathcal E_h(\Gamma_C)}\langle\mathbb C^{-1}\bm \sigma_h\mathbf t-\partial_{tt} w_{b} \mathbf t-\partial_{t}g_b\mathbf n, \bm \phi-I_h\bm \phi\rangle_e\\
    &+\sum_{e\in\mathcal E_h(\Gamma_S)}\langle\mathbf t^{\mathrm T}\mathbb C^{-1}\bm \sigma_h\mathbf t- \partial_{tt} w_{b}, (\bm \phi-I_h\bm \phi)\cdot\mathbf t\rangle_e.
  \end{align*}
The Cauchy-Schwartz inequality and the interpolation error estimates for $I_h$ in \eqref{estimate:phi} show
   \begin{align*}
     \|\underline{\bm \sigma}-\bm \sigma_h\|^2_{\mathbb C^{-1}}& \lesssim \sum_{K\in\mathcal T_h}h_K\|\operatorname{rot}(\mathbb C^{-1}\bm \sigma_h)\|_{0,K}|\bm \phi|_{1,K}+\sum_{e\in \mathcal E_h(\Omega)}h_e^{\frac{1}{2}}\|[\![\mathbb C^{-1}\bm \sigma_h\mathbf t_e]\!]\|_{0,e}|\bm \phi|_{1,S(e)}\\
    &+\sum_{e\in \mathcal E_h(\Gamma_C)}h_e^{\frac{1}{2}}\|\mathbb C^{-1}\bm \sigma_h\mathbf t- \partial_{tt} w_{b}\mathbf t-\partial_{t}g_b\mathbf n\|_{0,e}|\bm \phi|_{1,S(e)}\\
    &+\sum_{e\in \mathcal E_h(\Gamma_S)}h_e^{\frac{1}{2}}\|\mathbf t^{\mathrm T}\mathbb C^{-1}\bm \sigma_h\mathbf t-\partial_{tt} w_{b} \mathbf t\|_{0,e}|\bm \phi|_{1,S(e)}.
\end{align*}
 Since $|\bm \phi|_1\lesssim \|\underline{\bm \sigma}-\bm \sigma_h\|_0\lesssim  \|\underline{\bm \sigma}-\bm \sigma_h\|_{\mathbb C^{-1}}$, canceling out $\|\underline{\bm \sigma}-\bm \sigma_h\|_{\mathbb C^{-1}}$ on both sides of the above inequality concludes the proof.
\end{proof}

Finally, one can obtain the upper bound of $\|\bm \sigma-\bm \sigma_h\|_{\mathbb C^{-1}}$ through the estimation of $\|\bm \sigma-\underline{\bm \sigma}\|_{\mathbb C^{-1}}$ and $\|\underline{\bm \sigma}-\bm \sigma_h\|_{\mathbb C^{-1}}$. 
\begin{proof}[Proof of Theorem \ref{relia:bd:upper}]
 Lemmas \ref{relia:bd:1} and \ref{relia:bd:2} combined with the triangle inequality yield the conclusion immediately.  
\end{proof}

\subsection{Efficiency}This subsection shows the efficiency of the estimator by following the approach from \cite{alonso1996error}. To this end, for $m\geq k+4$, let $\Psi|_{K}\in P_m(K)$ for all $K\in \mathcal T_h$ with
\begin{align*}
    \Psi (\mathbf x) &= 0\quad &&\text{for all}~~a \in \mathcal V_h,\\
\nabla\Psi (\mathbf x) &= 0\quad &&\text{for all}~~a \in \mathcal V_h,\\
\langle\Psi, \bm q \rangle_e &= -\langle h_e[\![\mathbb C^{-1}\bm \sigma_h\mathbf t_e]\!], \bm q \rangle_e\quad &&\text{for all}~~\bm q\in P_{m-4}(e;\mathbb R^2),~~e \in \mathcal E_h(\Omega),\\
\langle\Psi, \bm q \rangle_e &= -h_e\langle\mathbb C^{-1}\bm \sigma_h\mathbf t-\partial_t g_{b}\mathbf n-\partial_{tt} w_{b}\mathbf t, \bm q \rangle_e\quad&&\text{for all}~~\bm q\in P_{m-4}(e;\mathbb R^2),~~e \in \mathcal E_h(\Gamma_C),\\
\langle\Psi\cdot\mathbf t, q \rangle_e &= -h_e\langle\mathbf t^{\mathrm T}\mathbb C^{-1}\bm \sigma_h\mathbf t-\partial_{tt} w_{b}, q \rangle_e\quad&&\text{for all}~~q\in P_{m-4}(e),~~e \in \mathcal E_h(\Gamma_S),\\
\langle\Psi\cdot\mathbf n,  q \rangle_e &= 0\quad&&\text{for all}~ ~q\in P_{m-4}(e),~~e \in \mathcal E_h(\Gamma_S),\\
\langle\Psi, \bm q\rangle_e&=0\quad&&\text{for all}~~\bm q\in P_{m-4}(e;\mathbb R^2), ~~e\in \mathcal E_h(\Gamma_F),\\
\langle\operatorname{div}\Psi, q\rangle_e &= 0\quad &&\text{for all}~~q\in P_{m-3}(e),~~e \in \mathcal E_h,\\
(\Psi,\bm r)_K &= h^2_K(\operatorname{rot}(\mathbb C^{-1}\bm \sigma_h),\bm r)_K, \quad &&\text{for all}~~\bm r\in RT_{m-5}^\perp,~~K\in \mathcal T_h.
\end{align*}

The above definition coincides with the degrees of freedom \eqref{v:dof:1}--\eqref{v:dof:4} for degree $m$. Indeed, $\Psi\in V_0$; more precisely, it lies in the analogous space as $V_{h,0}$, but with local polynomial degree $m$ instead of $k+1$. Furthermore, the definition of $\Psi$ shows 
\begin{equation}\label{psi:estimation}
\begin{aligned}
    \|\Psi\|_{0,K}^2 &\lesssim h_K^4\|\operatorname{rot}(\mathbb C^{-1}\bm \sigma_h)\|_{0,K}^2+\sum_{e\in \mathcal E(K)\cap\mathcal E_h(\Omega)}h^3_e\|[\![\mathbb C^{-1}\bm \sigma_h\mathbf t_e]\!]\|_{0,e}^2\\
    &+\sum_{e\in \mathcal E(K)\cap\mathcal E_h(\Gamma_C)}h^3_e\|\mathbb C^{-1}\bm \sigma_h\mathbf t-\partial_t g_{b}\mathbf n-\partial_{tt} w_{b}\mathbf t\|_{0,e}^2\\
    &+\sum_{e\in \mathcal E(K)\cap\mathcal E_h(\Gamma_S)}h^3_e\|\mathbf t^{\mathrm T}\mathbb C^{-1}\bm \sigma_h\mathbf t-\partial_{tt} w_{b}\|_{0,e}^2. 
\end{aligned}
\end{equation}
For any edge $e$ and $m\geq k+4$, recall that $\mathcal P_e^{m-4}$ denotes the $L^2$ projection onto $P_{m-4}(e)$. Define 
\begin{align*}
  &\operatorname{osc}^2_1(w_b,g_b,\Gamma_C):= \sum_{e\in\mathcal E_h(\Gamma_C)}h_e\|\partial_{tt} w_{b}\mathbf t+\partial_t g_b\mathbf n-\mathcal P_e^{m-4}(\partial_{tt} w_{b}\mathbf t+\partial_t g_b\mathbf n)  \|_{0,e}^2,\\
  &\operatorname{osc}^2_2(w_b,\Gamma_S):= \sum_{e\in\mathcal E_h(\Gamma_S)}h_e\|\partial_{tt} w_{b}-\mathcal P_e^{m-4}\partial_{tt} w_{b}  \|_{0,e}^2.
\end{align*} 

The efficiency is demonstrated below.
\begin{theorem}[Lower bound]There holds
    \begin{equation}\label{efficiency:1}
   \eta_1\lesssim \|\bm \sigma - \bm \sigma_h\|_{\mathbb C^{-1}}+\operatorname{osc}_{1}(w_b,g_b,\Gamma_C)+\operatorname{osc}_{2}(w_b,\Gamma_S).
    \end{equation}
\end{theorem}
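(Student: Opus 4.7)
The plan is to implement an Alonso-style efficiency argument in the spirit of \cite{alonso1996error}: use the test function $\Psi$ constructed above, pair $\operatorname{sym}\operatorname{curl}\Psi$ against $\bm\sigma-\bm\sigma_h$, and identify the resulting duality pairing with $-\eta_1^2$ modulo the oscillation terms. First I would verify admissibility: the vanishing vertex values and gradients of $\Psi$, together with the zero edge moments of $\Psi$ and $\operatorname{div}\Psi$ on every $e\in\mathcal E_h(\Gamma_F)$, force $\Psi|_{\Gamma_F}=\bm 0$; analogously, the vanishing normal-component moments on $\Gamma_S$ give $\Psi\cdot\mathbf n|_{\Gamma_S}=0$. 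Hence $\Psi\in V_0$ with $\bm r_j=\bm 0$ on every $\Gamma_{F_j}$, which makes Lemma \ref{trace:lemma} applicable to $\Psi$.

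Next, I would expand the duality pairing. Using $\bm\sigma=\mathbb C\nabla^2 w$ and Lemma \ref{trace:lemma}, the continuous part reduces to boundary data, $(\mathbb C^{-1}\bm\sigma,\operatorname{sym}\operatorname{curl}\Psi)=-\langle\partial_{tt}w_b\mathbf t+\partial_t g_b\mathbf n,\Psi\rangle_{\Gamma_C}-\langle\partial_{tt}w_b,\Psi\cdot\mathbf t\rangle_{\Gamma_S}$. Elementwise integration by parts for $(\mathbb C^{-1}\bm\sigma_h,\operatorname{sym}\operatorname{curl}\Psi)$, exactly as in the proof of Lemma \ref{relia:bd:2}, produces the volume $\operatorname{rot}$-term, the interior edge jumps, and the boundary traces. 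Subtracting, using $\Psi|_{\Gamma_F}=\bm 0$ to annihilate the $\Gamma_F$-contributions and $\Psi\cdot\mathbf n|_{\Gamma_S}=0$ to collapse the $\Gamma_S$-contribution onto its tangential part, yields
\begin{align*}
(\mathbb C^{-1}(\bm\sigma-\bm\sigma_h),\operatorname{sym}\operatorname{curl}\Psi) &= -\sum_{K\in\mathcal T_h}(\operatorname{rot}(\mathbb C^{-1}\bm\sigma_h),\Psi)_K + \sum_{e\in\mathcal E_h(\Omega)}\langle[\![\mathbb C^{-1}\bm\sigma_h\mathbf t_e]\!],\Psi\rangle_e\\
&\quad + \sum_{e\in\mathcal E_h(\Gamma_C)}\langle\mathbb C^{-1}\bm\sigma_h\mathbf t-\partial_{tt}w_b\mathbf t-\partial_t g_b\mathbf n,\Psi\rangle_e\\
&\quad + \sum_{e\in\mathcal E_h(\Gamma_S)}\langle\mathbf t^{\mathrm T}\mathbb C^{-1}\bm\sigma_h\mathbf t-\partial_{tt}w_b,\Psi\cdot\mathbf t\rangle_e.
\end{align*}

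Then I would evaluate each term against the prescribed degrees of freedom of $\Psi$. The interior jumps satisfy $[\![\mathbb C^{-1}\bm\sigma_h\mathbf t_e]\!]\in P_k(e;\mathbb R^2)\subset P_{m-4}$ as $m\geq k+4$, so the DOF give $\langle[\![\cdot]\!],\Psi\rangle_e = -h_e\|[\![\cdot]\!]\|_{0,e}^2$ exactly. For the boundary residuals on $\Gamma_C$ and $\Gamma_S$, since $\mathbb C^{-1}\bm\sigma_h\mathbf t\in P_k\subset P_{m-4}$, the high-frequency remainder after $\mathcal P_e^{m-4}$ comes entirely from $\partial_{tt}w_b,\partial_t g_b$; pairing with $\Psi$ yields $-h_e\|\mathcal P_e^{m-4}(\cdot)\|_{0,e}^2$ plus a contribution absorbed into $\operatorname{osc}_1(w_b,g_b,\Gamma_C)$ and $\operatorname{osc}_2(w_b,\Gamma_S)$ by Cauchy–Schwarz and the scaling bound on $\|\Psi\|_{0,e}$, with the triangle inequality recovering the full residual norm. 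For the interior volume term, the DOF $(\Psi,\bm r)_K=h_K^2(\operatorname{rot}(\mathbb C^{-1}\bm\sigma_h),\bm r)_K$ for $\bm r\in RT_{m-5}^\perp$ combined with the vanishing vertex and edge moments of $\Psi$ and the bubble-function structure in \eqref{int:v0-vh0} yield $-(\operatorname{rot}(\mathbb C^{-1}\bm\sigma_h),\Psi)_K\gtrsim h_K^2\|\operatorname{rot}(\mathbb C^{-1}\bm\sigma_h)\|_{0,K}^2$ by a standard scaling argument.

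To close the estimate, the elementwise bound \eqref{psi:estimation} together with $\|\operatorname{sym}\operatorname{curl}\Psi\|_{0,K}\leq\|\nabla\Psi\|_{0,K}$ and an inverse inequality $\|\nabla\Psi\|_{0,K}\lesssim h_K^{-1}\|\Psi\|_{0,K}$ yield $\|\operatorname{sym}\operatorname{curl}\Psi\|_0\lesssim\eta_1$. Cauchy–Schwarz and the norm equivalence \eqref{C-norm:equ} then give $|(\mathbb C^{-1}(\bm\sigma-\bm\sigma_h),\operatorname{sym}\operatorname{curl}\Psi)|\lesssim\|\bm\sigma-\bm\sigma_h\|_{\mathbb C^{-1}}\,\eta_1$, and combining with the identification above produces $\eta_1^2\lesssim(\|\bm\sigma-\bm\sigma_h\|_{\mathbb C^{-1}}+\operatorname{osc}_1+\operatorname{osc}_2)\,\eta_1$, whence the claim on dividing by $\eta_1$. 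The main obstacle is the interior volume step: the DOF prescribes moments of $\Psi$ only against $RT_{m-5}^\perp$, whereas $\operatorname{rot}(\mathbb C^{-1}\bm\sigma_h)$ ranges over the full $P_{k-1}(K;\mathbb R^2)$, so one must exploit the Hu–Zhang bubble structure in \eqref{int:v0-vh0} to ensure that the $RT_{m-5}$-part of $\Psi$ is controlled by the prescribed rotation moments, so that the full $h_K^2\|\operatorname{rot}(\mathbb C^{-1}\bm\sigma_h)\|_{0,K}^2$ lower bound is recovered; the secondary subtlety is verifying that the non-polynomial components of $w_b,g_b$ only contribute the $\operatorname{osc}_1,\operatorname{osc}_2$ corrections.
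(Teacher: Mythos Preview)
Your approach is essentially the paper's proof: both pair $\operatorname{sym}\operatorname{curl}\Psi$ against $\mathbb C^{-1}(\bm\sigma-\bm\sigma_h)$, invoke Lemma~\ref{trace:lemma} for the continuous contribution, integrate the discrete term by parts elementwise, evaluate each residual via the prescribed degrees of freedom of $\Psi$, and close with the inverse estimate combined with \eqref{psi:estimation}. The paper merely runs the computation in the opposite direction, starting from $\eta_1^2$ and arriving at $(\mathbb C^{-1}(\bm\sigma_h-\bm\sigma),\operatorname{sym}\operatorname{curl}\Psi)-\rho_C-\rho_S$, but the algebra is identical.

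Your flagged ``main obstacle'' for the interior volume term is not an obstacle at all. Since $m\ge k+4$ one has $\operatorname{rot}(\mathbb C^{-1}\bm\sigma_h)\in P_{k-1}(K;\mathbb R^2)\subset P_{m-5}(K;\mathbb R^2)\subset RT^\perp_{m-5}$, so the choice $\bm r=\operatorname{rot}(\mathbb C^{-1}\bm\sigma_h)$ in the interior DOF yields the \emph{exact} identity $(\Psi,\operatorname{rot}(\mathbb C^{-1}\bm\sigma_h))_K=h_K^2\|\operatorname{rot}(\mathbb C^{-1}\bm\sigma_h)\|_{0,K}^2$ with no appeal to bubble structure needed; this is exactly how the paper reads off the first summand of $\eta_1^2$. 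The only genuine subtlety, which you identify correctly, is that the $\Gamma_C$ and $\Gamma_S$ residuals are not polynomial, so the DOFs of $\Psi$ see only their $P_{m-4}$-projection and the remainder produces the oscillation corrections $\operatorname{osc}_1,\operatorname{osc}_2$; the paper packages these into the terms $\rho_C,\rho_S$ and bounds them exactly as you describe.
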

\begin{proof}
For simplicity, denote $\bm \xi_b := \partial_{tt} w_{b}\mathbf t+\partial_t g_{b}\mathbf n$. For $e\in\mathcal E_h(\Gamma_C)$, note that
\begin{align*}
    h_e\|\mathbb C^{-1}\bm \sigma_h\mathbf t-\bm \xi_b\|_{0,e}^2&=h_e\langle\mathbb C^{-1}\bm \sigma_h\mathbf t-\bm \xi_b, \mathbb C^{-1}\bm \sigma_h\mathbf t-\mathcal P_e^{m-4}\bm \xi_b+\mathcal P_e^{m-4}\bm \xi_b-\bm \xi_b\rangle_e\\
   &=-\langle\Psi,\mathbb C^{-1}\bm \sigma_h\mathbf t-\mathcal P_e^{m-4}\bm \xi_b\rangle_e+h_e\langle\mathbb C^{-1}\bm \sigma_h\mathbf t-\bm \xi_b, \mathcal P_e^{m-4}\bm \xi_b-\bm \xi_b\rangle_e\\
   &= -\langle\Psi,\mathbb C^{-1}\bm \sigma_h\mathbf t-\bm \xi_b\rangle_e-\rho_{C,e},
\end{align*}
where $\rho_{C,e}:=\langle\Psi,\bm \xi_b-\mathcal P_e^{m-4}\bm \xi_b\rangle_e-h_e\langle\mathbb C^{-1}\bm \sigma_h\mathbf t-\bm \xi_b, \mathcal P_e^{m-4}\bm \xi_b-\bm \xi_b\rangle_e$. 
Similarly, for $e\in\mathcal E_h(\Gamma_S)$,
\begin{align*}
    h_e\|\mathbf t^{\mathrm T}\mathbb C^{-1}\bm \sigma_h\mathbf t-\partial_{tt} w_{b}\|_{0,e}^2 = -\langle\Psi\cdot\mathbf t,\mathbf t^{\mathrm T}\mathbb C^{-1}\bm \sigma_h\mathbf t-\partial_{tt} w_{b}\rangle_e-\rho_{S,e}.
\end{align*}
where $\rho_{S,e}:= \langle\Psi\cdot \mathbf t, \partial_{tt}w_b-\mathcal P_e^{m-4}\partial_{tt}w_b\rangle_e-h_e\langle\mathbf t^{\mathrm T}\mathbb C^{-1}\bm \sigma_h\mathbf t-\partial_{tt} w_{b},\mathcal P_e^{m-4}\partial_{tt} w_{b}-\partial_{tt} w_{b}\rangle_e$.
Therefore, $\eta_1$ can be expressed as
     \begin{align*}
\eta_1^2 &= \sum_{K\in\mathcal T_h}\left((\Psi,\operatorname{rot}(\mathbb C^{-1}\bm \sigma_h))_K- \sum_{e\in\mathcal E(K)\cap \mathcal E_h(\Omega)} \langle\Psi, \mathbb C^{-1}\bm \sigma_h\mathbf t_e\rangle_e\right)\\&-\sum_{e\in\mathcal E_h(\Gamma_C)}\langle\Psi,\mathbb C^{-1}\bm \sigma_h\mathbf t-\bm \xi_b\rangle_e-\sum_{e\in\mathcal E_h(\Gamma_S)}\langle\Psi, \mathbb C^{-1}\bm \sigma_h\mathbf t-\partial_{tt} w_{b}\mathbf t\rangle_e-\rho_C-\rho_S.\\
 \end{align*}
Here $\rho_C= \sum\limits_{e\in\mathcal E_h(\Gamma_C)}\rho_{C,e}$ and $\rho_S= \sum\limits_{e\in\mathcal E_h(\Gamma_S)}\rho_{S,e}$.
Note that $\langle\Psi\cdot\mathbf n, \mathbb C^{-1}\bm \sigma_h\mathbf t\rangle_e=0$ for all $e\in \mathcal E_h(\Gamma_S)$. Rearranging the expression and consolidating like terms result in 
\begin{align*}
    \eta_1^2 &= \sum_{K\in\mathcal T_h}\left((\Psi,\operatorname{rot}(\mathbb C^{-1}\bm \sigma_h))_K- \sum_{e\in\mathcal E(K)} \langle\Psi, \mathbb C^{-1}\bm \sigma_h\mathbf t_e\rangle_e\right)\\
     &+\sum_{e\in\mathcal E_h(\Gamma_C)}\langle\Psi, \bm \xi_b\rangle_e+\sum_{e\in\mathcal E_h(\Gamma_S)}\langle\Psi\cdot \mathbf t, \partial_{tt} w_{b}\rangle_e-\rho_C-\rho_S.
\end{align*}
An integration by parts leads to 
\begin{equation}\label{eta:2:expression}
   \begin{aligned}
   \eta_1^2 & =(\mathbb C^{-1}\bm \sigma_h, \operatorname{sym}\operatorname{curl}\Psi) +\langle\Psi, \bm \xi_b\rangle_{\Gamma_C} +\langle\Psi\cdot\mathbf t, \partial_{tt} w_{b}\rangle_{\Gamma_S}-\rho_C-\rho_S.
\end{aligned} 
\end{equation}
Lemma \ref{rightarrow} shows that $\operatorname{sym}\operatorname{curl}\Psi\in \Sigma_0$. Substituting $\operatorname{sym}\operatorname{curl}\Psi$ into the first equation of the continuous variational formulation \eqref{mixed:bd:con}, one can obtain $(\mathbb C^{-1}\bm \sigma, \operatorname{sym}\operatorname{curl}\Psi) = \operatorname{tr}_{b}(u)(\operatorname{sym}\operatorname{curl}\Psi)$. 
Lemma \ref{trace:lemma} gives rise to
\begin{align*}
    \quad\operatorname{tr}_{b}(u)(\operatorname{sym}\operatorname{curl}\Psi)= -\langle \bm \xi_b, \Psi\rangle_{\Gamma_C}-\langle\partial_{tt} w_{b}, \Psi\cdot \mathbf t\rangle_{\Gamma_S}.
    \end{align*}
Inserting a term $\mathbb C^{-1}\bm \sigma$ into \eqref{eta:2:expression} yields
\begin{align*}
\eta_1^2 &= (\mathbb C^{-1}(\bm \sigma_h-\bm \sigma), \operatorname{sym}\operatorname{curl}\Psi)+\operatorname{tr}_{b}(u)(\operatorname{sym}\operatorname{curl}\Psi)\\
&+\sum_{e\in\mathcal E_h(\Gamma_C)}\langle\Psi,\bm \xi_b\rangle_e +\sum_{e\in\mathcal E_h(\Gamma_S)}\langle\Psi\cdot\mathbf t, \partial_{tt} w_{b}\rangle_e-\rho_C-\rho_S\\
&=(\mathbb C^{-1}(\bm \sigma_h-\bm \sigma), \operatorname{sym}\operatorname{curl}\Psi)-\rho_C-\rho_S.
    \end{align*}
This, \eqref{psi:estimation}, and the inverse estimate yield
\begin{align*}
\eta_1^2 &\lesssim \|\mathbb C^{-1}(\bm \sigma_h-\bm \sigma)\|_0\|\operatorname{sym}\operatorname{curl}\Psi\|_0+|\rho_C|+|\rho_S|\\
& \lesssim\|\mathbb C^{-1}(\bm \sigma_h-\bm \sigma)\|_0(\sum_{K\in\mathcal T_h}h_K^{-2}\|\Psi\|_{0,K}^2)^{\frac{1}{2}}\\
&+\operatorname{osc}_{1}(w_b,g_b,\Gamma_C)\left((\sum_{e\in\mathcal E_h(\Gamma_C)}h_e\|\mathbb C^{-1}\bm \sigma_h\mathbf t-\bm \xi_b\|_{0,e}^2)^{\frac{1}{2}}+(\sum_{K\in\mathcal T_h}h_K^{-2}\|\Psi\|_{0,K}^2)^{\frac{1}{2}}\right)\\
&+\operatorname{osc}_{2}(w_b,\Gamma_S)\left((\sum_{e\in\mathcal E_h(\Gamma_S)}h_e\|\mathbf t^{\mathrm T}\mathbb C^{-1}\bm \sigma_h\mathbf t-\partial_{tt}w_b\|^2_{0,\Gamma_S})^{\frac{1}{2}}+(\sum_{K\in\mathcal T_h}h_K^{-2}\|\Psi\|_{0,K}^2)^{\frac{1}{2}}\right)\\
  &\lesssim \left(\|\bm \sigma - \bm \sigma_h\|_{\mathbb C^{-1}}+\operatorname{osc}_{1}(w_b,g_b,\Gamma_C)+\operatorname{osc}_{2}(w_b,\Gamma_S)\right) \eta_1.
    \end{align*}
    Thus $\eta_1\lesssim \|\bm \sigma - \bm \sigma_h\|_{\mathbb C^{-1}}+\operatorname{osc}_{1}(w_b,g_b,\Gamma_C)+\operatorname{osc}_{2}(w_b,\Gamma_S)$
follows.
\end{proof}

\subsection{A posteriori error estimation for deflection}
This subsection explores the a posterior error estimator for the postprocessed deflection. 

Note that $w_h\in U_h$ solves the discrete mixed variational problem \eqref{mixed:bd:dis} and $ U_h$ consists of piecewise $P_{k-2}$ for $k\geq 3$. Define 
\ben
W_{h}^{\ast}:= \{v\in L^{2}(\Omega): v|_{K}\in P_{k+2}(K)~~\text{for all}~~K\in\mathcal{T}_{h}\}. 
\een
On each $K\in \mathcal{T}_{h}$, let $\mathcal{Q}_{K}=\mathcal{Q}_{h}|_{K}$. Let $w_{h}^{\ast}\in W_{h}^{\ast}$ be a solution to 
\be\label{post:pro:1}
\begin{aligned}
\mathcal{Q}_{K}w_{h}^{\ast} &= w_{h},\\
(\nabla^{2}w_{h}^{\ast},\nabla^{2} q)_{K}&=(\mathbb C^{-1}\bsi_{h}, \nabla^{2}q)_{K}\quad \text{for all}~~q\in (I - \mathcal{Q}_{K})W_{h}^{\ast}|_{K},
\end{aligned}
\ee
where
\ben
(I - \mathcal{Q}_{K})W_{h}^{\ast}|_{K}= \{q \in P_{k+2}(K): (q, v)_{K}=0 \quad \text{for all}~~v\in P_{k-2}(K)\}.
\een
Actually, the projection of $\mathbb C^{-1}\bsi_{h}$ on the Hilbert space $(I - \mathcal{Q}_{K})W_{h}^{\ast}|_{K}$  in $H^{2}$ semi-inner product is computed in \eqref{post:pro:1}, where $w_{h}$ is used to impose the constraint.  Thus the local $H^{2}$ projection is well-defined.

Define a mesh-dependent norm
\begin{align*}
    |v|_{2,h}^2:= \sum_{K\in\mathcal T_h}|v|_{2,K}^2+\sum_{e\in\mathcal E_h(\Omega)\cup \mathcal E_h(\Gamma_C)}(h_e^{-3}\|[\![v]\!]\|_{0,e}^2+h_e^{-1}\|[\![\partial_{n_e} v]\!]\|_{0,e}^2)+\sum_{e\in\mathcal E_h(\Gamma_S)}h_e^{-3}\|[\![v]\!]\|_{0,e}^2.
\end{align*}
Proceeding as the proof in \cite[Lemma 3.4]{hu2021family}, one can obtain the following discrete inf-sup condition: 
\begin{equation}
    \label{discrete:inf:sup:post}
    |v_h|_{2,h}\lesssim \sup_{\bm \tau_h\in \Sigma_{h,0}\atop \bm \tau_h\neq 0}\frac{(\operatorname{div}\operatorname{div}\bm \tau_h,v_h)}{\|\bm \tau_h\|_0} \quad \text{for all}~~v_h\in U_h.
\end{equation}

Besides, the definition of the $L^2$-projection operator $\mathcal Q_h$ shows 
\begin{equation*}
   (v-\mathcal Q_h v,q)=0\quad\text{for all}\quad q\in U_h.
\end{equation*}
This ensures that $\|\nabla_h^2(v-\mathcal Q_hv)\|_0$ is a norm with piecewise-defined hessian operator $\nabla^2_h$. Therefore, 
\begin{equation}
    \label{2:h:norm}
    |v-\mathcal Q_h v|_{2,h}\lesssim \|\nabla^2_h(v-\mathcal Q_h v)\|_0.
\end{equation}

\begin{theorem}
It holds 
\begin{equation}\label{relia:uhstar:th}
    \|\bm \sigma-\bm \sigma_h\|_{\mathbb C^{-1}}+|w-w_h^\ast|_{2,h}+\operatorname{osc}+\operatorname{osc}_1+\operatorname{osc}_2\cong \eta + \|\mathbb C^{-1}\bm \sigma_h-\nabla_h^2 w_h^\ast\|_0+\operatorname{osc}_1+\operatorname{osc}_2\,.
\end{equation}
\end{theorem}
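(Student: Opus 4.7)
The plan is to prove the two-sided equivalence by establishing the upper bound and the lower bound separately. Abbreviate the left-hand side of \eqref{relia:uhstar:th} as $A$ and the right-hand side as $B$. The backbone consists of the reliability of $\eta$ (Theorem \ref{relia:bd:upper}), its efficiency (the previous theorem), the discrete inf-sup condition \eqref{discrete:inf:sup:post} on $U_h$, the kernel-type estimate \eqref{2:h:norm}, and the mixed identity $\mathbb C^{-1}\bm\sigma=\nabla^2 w$.

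For $A\lesssim B$, the only nontrivial contribution is the bound on $|w-w_h^\ast|_{2,h}$, since $\|\bm\sigma-\bm\sigma_h\|_{\mathbb C^{-1}}\lesssim\eta$ holds by Theorem \ref{relia:bd:upper} and $\operatorname{osc}\leq\eta$ by definition. I would exploit the postprocessing constraint $\mathcal Q_h w_h^\ast=w_h$ to split
\begin{equation*}
w-w_h^\ast=(I-\mathcal Q_h)(w-w_h^\ast)+(\mathcal Q_h w-w_h).
\end{equation*}
To control the second summand, subtracting \eqref{mixed:bd:dis} from \eqref{mixed:bd:con} yields the error equation $(\operatorname{div}\operatorname{div}\bm\tau_h,w-w_h)=(\mathbb C^{-1}(\bm\sigma-\bm\sigma_h),\bm\tau_h)$ for every $\bm\tau_h\in\Sigma_{h,0}$; since $\operatorname{div}\operatorname{div}\bm\tau_h\in U_h$ annihilates $w-\mathcal Q_h w$, the discrete inf-sup condition \eqref{discrete:inf:sup:post} then gives $|\mathcal Q_h w-w_h|_{2,h}\lesssim\|\bm\sigma-\bm\sigma_h\|_{\mathbb C^{-1}}$. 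For the first summand, applying \eqref{2:h:norm} to $v=w-w_h^\ast$, a triangle inequality, and $\mathbb C^{-1}\bm\sigma=\nabla^2 w$ yields
\begin{equation*}
|(I-\mathcal Q_h)(w-w_h^\ast)|_{2,h}\lesssim \|\mathbb C^{-1}(\bm\sigma-\bm\sigma_h)\|_0+\|\mathbb C^{-1}\bm\sigma_h-\nabla^2_h w_h^\ast\|_0+|\mathcal Q_h w-w_h|_{2,h},
\end{equation*}
where the last term reuses the inf-sup bound. Combining these with Theorem \ref{relia:bd:upper} gives $|w-w_h^\ast|_{2,h}\lesssim\eta+\|\mathbb C^{-1}\bm\sigma_h-\nabla^2_h w_h^\ast\|_0\leq B$, closing the reliability direction.

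For $B\lesssim A$, the efficiency estimate $\eta_1\lesssim\|\bm\sigma-\bm\sigma_h\|_{\mathbb C^{-1}}+\operatorname{osc}_1+\operatorname{osc}_2$ together with $\eta=\eta_1+\operatorname{osc}$ immediately gives $\eta\lesssim A$, while a triangle inequality combined with $\mathbb C^{-1}\bm\sigma=\nabla^2 w$ yields
\begin{equation*}
\|\mathbb C^{-1}\bm\sigma_h-\nabla^2_h w_h^\ast\|_0\leq \|\mathbb C^{-1}(\bm\sigma-\bm\sigma_h)\|_0+\|\nabla^2 w-\nabla^2_h w_h^\ast\|_0\lesssim \|\bm\sigma-\bm\sigma_h\|_{\mathbb C^{-1}}+|w-w_h^\ast|_{2,h}\leq A.
\end{equation*}

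The hardest part, in my view, is the reliability step: because $|\cdot|_{2,h}$ incorporates jump norms across interior edges and boundary segments of $\Gamma_C\cup\Gamma_S$, a plain triangle inequality on $w-w_h^\ast$ does not suffice to reduce the estimate to volume-type quantities on the right-hand side. The crux is recognizing that the postprocessing identity $\mathcal Q_h w_h^\ast=w_h$ forces $\mathcal Q_h(w-w_h^\ast)=\mathcal Q_h w-w_h\in U_h$, so that the split $w-w_h^\ast=(I-\mathcal Q_h)(w-w_h^\ast)+(\mathcal Q_h w-w_h)$ routes the $U_h$-piece through the discrete inf-sup condition (which absorbs its jump contributions via \eqref{discrete:inf:sup:post}) and the orthogonal piece through \eqref{2:h:norm} (which collapses its jumps into the piecewise Hessian). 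Without this clean decomposition, neither the boundary traces of $w_h^\ast$ on $\Gamma_C\cup\Gamma_S$ nor the interior jumps could be controlled by $\|\mathbb C^{-1}\bm\sigma_h-\nabla^2_h w_h^\ast\|_0$ alone.
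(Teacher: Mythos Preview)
Your proposal is correct and follows essentially the same approach as the paper. The split $w-w_h^\ast=(I-\mathcal Q_h)(w-w_h^\ast)+(\mathcal Q_h w-w_h)$ is exactly the paper's decomposition (since $\mathcal Q_h w_h^\ast=w_h$ gives $\mathcal Q_h(w-w_h^\ast)=\mathcal Q_h w-w_h$), and both the use of \eqref{discrete:inf:sup:post} on the $U_h$-piece together with \eqref{2:h:norm} on the orthogonal piece, and the efficiency-plus-triangle-inequality argument for the reverse direction, match the paper's proof.
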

\begin{proof}

The discrete inf-sup condition \eqref{discrete:inf:sup:post} and the definition of $w_h^\ast$ in \eqref{post:pro:1} together with the the variational formulations \eqref{mixed:bd:con} and \eqref{mixed:bd:dis} lead to
\begin{equation}
    \label{relia:uhstar:1}
    \begin{aligned}
      & |\mathcal Q_h(w-w_h^\ast)|_{2,h} \lesssim \sup_{\bm \tau_h\in \Sigma_{h,0}\atop \bm \tau_h\neq 0}\frac{(\operatorname{div}\operatorname{div}\bm \tau_h,\mathcal Q_h(w-w_h^\ast))}{\|\bm \tau_h\|_0}=  \sup_{\bm \tau_h\in \Sigma_{h,0}\atop \bm \tau_h\neq 0}\frac{(\operatorname{div}\operatorname{div}\bm \tau_h,w-w_h^\ast)}{\|\bm \tau_h\|_0}\\
    &= \sup_{\bm \tau_h\in \Sigma_{h,0}\atop \bm \tau_h\neq 0}\frac{(\operatorname{div}\operatorname{div}\bm \tau_h,w-w_h)}{\|\bm \tau_h\|_0} = \sup_{\bm \tau_h\in \Sigma_{h,0}\atop \bm \tau_h\neq 0}\frac{(\mathbb C^{-1}(\bm \sigma-\bm \sigma_h),\bm \tau_h)}{\|\bm \tau_h\|_0} \lesssim \|\bm \sigma-\bm \sigma_h\|_{\mathbb C^{-1}}.
    \end{aligned}    
\end{equation}
The norm equivalence \eqref{2:h:norm}, the triangle inequality, and \eqref{relia:uhstar:1} give rise to 
\begin{equation}
    \label{relia:uhstar:2}
    \begin{aligned}
         |w-w_h^\ast-\mathcal Q_{h}(w-w_h^\ast)|_{2,h}&\lesssim \|\nabla_h^2(w-w_h^\ast-\mathcal Q_{h}(w-w_h^\ast))\|_0\\
         &\lesssim \|\nabla_h^2(w-w_h^\ast)\|_0+|\mathcal Q_{h}(w-w_h^\ast)|_{2,h}\\
         &= \|\mathbb C^{-1}\bm \sigma-\nabla_h^2 w_h^\ast\|_0 +|\mathcal Q_{h}(w-w_h^\ast)|_{2,h}\\
         &\lesssim \|\mathbb C^{-1}(\bm \sigma-\bm \sigma_h)\|_0+\|\mathbb C^{-1}\bm \sigma_h - \nabla_h^2 w_h^\ast\|_0.
    \end{aligned}
\end{equation}
The triangle inequality shows
\begin{equation}\label{2:h:tri}
 \begin{aligned}
        |w-w_h^\ast|_{2,h}\leq |w-w_h^\ast-\mathcal Q_{h}(w-w_h^\ast)|_{2,h}+|\mathcal Q_h(w-w_h^\ast)|_{2,h}.
    \end{aligned}    
\end{equation}
This and \eqref{relia:uhstar:1}--\eqref{relia:uhstar:2} lead to
 \begin{align*}
        |w-w_h^\ast|_{2,h}&\lesssim \|\mathbb C^{-1}(\bm \sigma-\bm \sigma_h)\|_0+\|\mathbb C^{-1}\bm \sigma_h-\nabla^2_h w_h^\ast\|_0\\
        &\lesssim \|\bm \sigma-\bm \sigma_h\|_{\mathbb C^{-1}}+\|\mathbb C^{-1}\bm \sigma_h-\nabla^2_h w_h^\ast\|_0.
    \end{align*}
This plus Theorem \ref{relia:bd:upper} results in $$\|\bm \sigma-\bm \sigma_h\|_{\mathbb C^{-1}}+|w-w_h^\ast|_{2,h}\lesssim \eta + \|\mathbb C^{-1}\bm \sigma_h-\nabla_h^2 w_h^\ast\|_0.$$

On the other hand, by the triangle inequality, one can obtain
\begin{equation}
    \label{effi:uhstar}
    \begin{aligned}
        \|\mathbb C^{-1}\bm \sigma_h-\nabla_h^2 w_h^\ast\|_0&\leq \|\mathbb C^{-1}(\bm \sigma-\bm \sigma_h)\|_0+\|\mathbb C^{-1}\bm \sigma-\nabla^2_h w_h^\ast\|_0\\
        &\lesssim\|\bm \sigma-\bm \sigma_h\|_{\mathbb C^{-1}}+\|\nabla^2 w-\nabla^2_h w_h^\ast\|_0\\
        &\lesssim\|\bm \sigma-\bm \sigma_h\|_{\mathbb C^{-1}}+|w-w_h^\ast|_{2,h}.
    \end{aligned}
\end{equation}
This and the upper bound \eqref{efficiency:1} yield  $$\|\bm \sigma-\bm \sigma_h\|_{\mathbb C^{-1}}+|w-w_h^\ast|_{2,h}+\operatorname{osc}+\operatorname{osc}_1+\operatorname{osc}_2\gtrsim \eta + \|\mathbb C^{-1}\bm \sigma_h-\nabla_h^2 w_h^\ast\|_0.$$
This concludes the proof.
\end{proof}

\section{Optimal convergence analysis}
Let $\mathcal T_0$ be an initial shape-regular triangulation of $\Omega$ into triangles. Let $\mathbb T: = \mathbb T(\mathcal T_0)$ denote the set of all admissible regular triangulations obtained from $\mathcal T_0$ through a finite number of successive newest vertex bisections (NVB)\cite{stevenson2008completion}. Let $\mathcal T_1$, $\mathcal T_2$, ..., $\mathcal T_N$ represent the admissible meshes obtained by successive refinement of $\mathcal T_0$, and 
let $\mathcal T_h$ denote a refined mesh of $\mathcal T_H\in \mathbb T$.

The discrete bending moment space $\Sigma_h$ is non-nested because the space defined on the coarse mesh $\mathcal T_H$ is not necessarily a subspace of the corresponding space on the fine mesh $\mathcal T_h$, due to extra $C^0$ vertex continuity imposed on functions in $\Sigma_h$. This causes essential difficulties in demonstrating some orthogonality or quasi-orthogonality which plays an important role in the optimal convergence analysis of adaptive algorithms. 
 
This section first relaxes $C^0$ vertex continuity of functions in $\Sigma_h$ and introduces an extended bending moment space $\widetilde \Sigma_h$, which is nested. 
Based on these nested spaces, the optimal convergence of the adaptive algorithm is established through the unified analysis from \cite{hu2018unified}. For simplicity, this section focuses on the homogeneous clamped boundary condition $\Gamma_C=\partial\Omega$ with $w_b \equiv 0$ and $g_b\equiv 0$. The corresponding mixed formulation then seeks $(\bm \sigma, w)\in H(\operatorname{div}\operatorname{div},\Omega;\mathbb S)\times L^2(\Omega)$ such that 
\begin{equation}\label{clamped:bd:con}
 \begin{aligned}
    (\mathbb C^{-1}\bm \sigma, \bm \tau) -(\operatorname{div}\operatorname{div}\bm \tau, w) &= 0\quad&&\text{for all}~~\bm \tau\in H(\operatorname{div}\operatorname{div},\Omega;\mathbb S),\\
    (\operatorname{div}\operatorname{div}\bm \sigma, v) &= (f,v)\quad&& \text{for all}~~v\in L^2(\Omega).
\end{aligned}   
\end{equation}
On each element $K\in\mathcal T_h$, the posteriori error estimator $\eta$ from \eqref{estimator:global} reads:
\begin{equation}\label{estimator:on:K}
    \eta^2(\mathcal T_h, K):= h_K^2\|\operatorname{rot}(\mathbb C^{-1}\bm \sigma_h)\|_{0,K}^2 + \sum_{e\in \mathcal E(K)}h_K\|[\![\mathbb C^{-1}\bm \sigma_h\mathbf t_e]\!]\|_{0,e}^2+h_K^4\|f-\mathcal Q_h f\|_{0,K}^2.
\end{equation}
Let 
\begin{align*}
    &\eta^2(\mathcal T_h, \mathcal M): = \sum_{K\in\mathcal M}\eta^2(\mathcal T_h, K)\quad\text{for all}\, \, \mathcal M \subset \mathcal T_h,\\
    &\eta^2(\mathcal T_h): = \sum_{K\in\mathcal T_h}\eta^2(\mathcal T_h, K).
\end{align*}


 \subsection{Extended bending moment space on adaptive meshes}\label{relax:vertex}To make the bending moment space $\Sigma_h$ hierarchical on an admissible triangulation $\mathcal T_h\in \mathbb T$, an extended bending moment space $\widetilde{\Sigma}_h$ will be defined in this subsection. 

Following the notation for $\mathcal T_h$, where $\mathcal E_h$ and $\mathcal V_h$ denote the sets of all edges and vertices, respectively, the subscript is replaced with $H$ for the coarse mesh $\mathcal T_H$. Accordingly, $\mathcal E_H$ and $\mathcal V_H$ represent the edges and vertices associated with $\mathcal T_H$, respectively. Similarly, let $\mathcal E_h(\Omega)$ and $\mathcal V_h(\Omega)$ denote the sets of all interior edges and vertices of $\mathcal T_h$, and define $\mathcal E_H(\Omega)$ and $\mathcal V_H(\Omega)$ analogously for $\mathcal T_H$.

For $e\in\mathcal E_h$,  $\mathbf n_e$ denotes the unit normal vector and $\mathbf t_e=\mathbf n_e^\perp$ is the unit tangential vector. Let $\mathcal V_0$ denote the set of all vertices on the initial triangulation $\mathcal T_0$. The newest vertex bisection creates each new vertex $\mathbf x_e\in \mathcal V_h\backslash \mathcal V_0$ as a midpoint of some old edge $e$ associated with tangential vector $\mathbf t_e$ and normal vector $\mathbf n_e$. For $\mathbf x_e\in\mathcal V_h\backslash \mathcal V_0$, define two patches $\omega_{\mathbf x_e}^+$ and $\omega_{\mathbf x_e}^-$ by 
\begin{equation}
    \begin{aligned}
        \omega_{\mathbf x_e}^+:=\cup \{K\in{\mathcal T_h}:\mathbf x_e\in K,\, (\operatorname{mid}(K)-\mathbf x_e)\cdot \mathbf n_e>0\},\\
        \omega_{\mathbf x_e}^-:=\cup \{K\in{\mathcal T_h}:\mathbf x_e\in K,\, (\operatorname{mid}(K)-\mathbf x_e)\cdot \mathbf n_e<0\},
    \end{aligned}
\end{equation}
when it comes to any boundary edge $e\subset \partial \Omega$, $\omega_{\mathbf x_e}^+$ is an empty set.

Unlike $\bm \tau\in \Sigma_h$ that all components of $\bm \tau$ at $\mathbf x_e$ are continuous, the continuity of the pure tangential component of $\widetilde{\bm \tau}\in \widetilde{\Sigma}_h$ is relaxed, namely, the values of $\mathbf t_e^{\mathrm T}\widetilde{\bm \tau}\mathbf t_e$ on $\omega_{\mathbf x_e}^+$ and $\omega_{\mathbf x_e}^-$ are not necessarily the same. In particular, the extended bending moment space is defined by
\begin{equation*}
\begin{aligned}
 \widetilde{\Sigma}_h:=\{\bta\in H(\operatorname{div}\operatorname{div}, \Omega; \mathbb{S}): \bta|_{K}\in P_{k}(K;\ms)\, \text{for all}\, K\in\mathcal T_h,\\
 \bm \tau(\mathbf x)\, \, \text{is continuous at all }\mathbf x\in\mathcal V_0,\\
 \mathbf t_e^{\mathrm T}\bm \tau\mathbf t_e(\mathbf x_e)\, \, \text{is continuous}\, \, \text{at}\, \mathbf x_e\, \text{in}\,  \omega_{\mathbf x_e}^+\, \, \text{and at}\, \, \mathbf x_e\, \text{in}\, \omega_{\mathbf x_e}^-\, \, \text{for all}\, \, \mathbf x_e\in \mathcal V_h\backslash \mathcal V_0,\\
 \bm \tau\mathbf n_e(\mathbf x_e)\, \, \text{is continuous at }\mathbf x_e\in \mathcal V_h\backslash \mathcal V_0,\\
[\![\bta\mathbf{n}_e]\!]_{e}=\mathbf{0}\, \text{and}\, [\![\mathbf{n}_e^{{\mathrm T}}\operatorname{div}\bta]\!]_{e}=0\,\text{for all}\,e\in  \mathcal E_h(\Omega)\}.
\end{aligned}
\end{equation*}
The extended bending moment space with respect to $\mathcal T_H$ reads $\widetilde{\Sigma}_H$.

\begin{theorem}[Nestedness]There holds
 \[\widetilde{\Sigma}_H\subset \widetilde{\Sigma}_h.\] 
\end{theorem}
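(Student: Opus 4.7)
The plan is to verify directly that every $\bm \tau \in \widetilde{\Sigma}_H$ meets all the defining conditions of $\widetilde{\Sigma}_h$. The piecewise polynomial structure and $H(\operatorname{div}\operatorname{div})$-conformity transfer immediately, because newest vertex bisection guarantees that each fine element $K' \in \mathcal T_h$ sits inside a unique coarse element $K \in \mathcal T_H$, so $\bm \tau|_{K'}$ is simply the restriction of $\bm \tau|_K \in P_k(K;\mathbb S)$. For the interior edge jump conditions on $e' \in \mathcal E_h(\Omega)$, I would distinguish two subcases: if $e' \subset e$ for some $e \in \mathcal E_H(\Omega)$, the restrictions of $[\![\bm \tau \mathbf{n}_e]\!]_e = 0$ and $[\![\mathbf{n}_e^{\mathrm T} \operatorname{div} \bm \tau]\!]_e = 0$ to $e'$ give the desired identities; otherwise $e'$ lies in the interior of some coarse triangle $K \in \mathcal T_H$, so the two fine elements sharing $e'$ both lie inside $K$ and $\bm \tau$ is a single polynomial across $e'$, making the jumps vanish trivially.

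The vertex conditions require more care, and I would split into three cases according to where $\mathbf{x} \in \mathcal V_h$ originates. For $\mathbf{x} \in \mathcal V_0$, full continuity at $\mathbf{x}$ is inherited from the coarse-mesh hypothesis, since each fine element touching $\mathbf{x}$ sits inside a coarse element touching $\mathbf{x}$. For $\mathbf{x}_e \in \mathcal V_h \setminus \mathcal V_H$ (a vertex newly created in the refinement from $\mathcal T_H$ to $\mathcal T_h$), $\mathbf{x}_e$ is the midpoint of some $e \in \mathcal E_H$, and every fine element containing $\mathbf{x}_e$ lies inside one of the (at most two) coarse triangles $T_\pm$ adjacent to $e$. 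Hence $\bm \tau(\mathbf{x}_e)$ is automatically single-valued on each side, so in particular $\mathbf{t}_e^{\mathrm T}\bm\tau\mathbf{t}_e(\mathbf{x}_e)$ is continuous within each patch, and the cross-patch continuity of $\bm \tau \mathbf{n}_e(\mathbf{x}_e)$ follows from the coarse-mesh jump identity on $e$ (when $e$ is interior; if $e$ is boundary one patch is empty, and nothing remains to check).

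The remaining case $\mathbf{x}_e \in \mathcal V_H \setminus \mathcal V_0$ is the delicate one. Here the tangential direction $\mathbf{t}_e$ attached to $\mathbf{x}_e$ was fixed the moment $\mathbf{x}_e$ was first created by bisection, and is the same on both $\mathcal T_H$ and $\mathcal T_h$. I would establish the geometric claim that the refined patches $\omega_{\mathbf{x}_e}^{\pm}(\mathcal T_h)$ are subdivisions of $\omega_{\mathbf{x}_e}^{\pm}(\mathcal T_H)$, arguing that each coarse triangle with vertex $\mathbf{x}_e$ lies entirely on one side of the affine line through $\mathbf{x}_e$ in direction $\mathbf{t}_e$, so any NVB-descendant of that triangle lies on the same side and has its centroid on that side. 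Granted this, every fine element in $\omega_{\mathbf{x}_e}^+(\mathcal T_h)$ is contained in some coarse element of $\omega_{\mathbf{x}_e}^+(\mathcal T_H)$ and likewise for the minus patch, so the split-patch continuity of $\mathbf{t}_e^{\mathrm T} \bm \tau \mathbf{t}_e(\mathbf{x}_e)$ and the cross-patch continuity of $\bm \tau \mathbf{n}_e(\mathbf{x}_e)$ transfer verbatim from $\widetilde{\Sigma}_H$ to $\widetilde{\Sigma}_h$.

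The main (indeed only) nonroutine step is the geometric claim above on patch preservation across NVB: that subdivisions of a coarse triangle incident to $\mathbf{x}_e \in \mathcal V_H \setminus \mathcal V_0$ cannot straddle the $\mathbf{t}_e$-line through $\mathbf{x}_e$. This is a structural property of newest vertex bisection rather than anything about $\bm \tau$, and once it is in hand the entire verification is a bookkeeping exercise that follows the four structural conditions defining $\widetilde{\Sigma}_h$ in the order above.
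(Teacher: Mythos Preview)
Your proposal is correct and proceeds by the same direct verification of the defining conditions of $\widetilde{\Sigma}_h$ as the paper; indeed you are more thorough, since the paper's proof treats only the new-vertex case $\mathbf{x}_e\in\mathcal V_h\setminus\mathcal V_H$ (observing that $\omega_{\mathbf{x}_e}^\pm$ sit inside the two coarse triangles sharing the bisected edge) and leaves your ``delicate'' case $\mathbf{x}_e\in\mathcal V_H\setminus\mathcal V_0$ implicit. One small point: a vertex in $\mathcal V_h\setminus\mathcal V_H$ need not literally be the midpoint of a \emph{coarse} edge when $\mathcal T_h$ is several bisections past $\mathcal T_H$ (it may lie in the interior of a coarse element, or at a non-midpoint of a coarse edge), but in either sub-case your argument goes through unchanged, since the associated $\mathbf t_e$ still parallels the coarse edge in the latter case and $\bm\tau$ is a single polynomial across the whole patch in the former.
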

\begin{proof}
    Given any interior edge $e\in \mathcal E_H(\Omega)$ shared by two triangles $K_j\in\mathcal T_H$, $j = 1,2$, the bisection of $e$ induces a refinement of $K_j$, leading to four new global degrees of freedom at the midpoint $\mathbf x_e: = \operatorname{mid}(e)\in \mathcal V_h(\Omega)\backslash \mathcal V_H$. Observe that $\omega_{\mathbf x_e}^+$ and $\omega_{\mathbf x_e}^-$ are contained in $K_1$ and $K_2$, respectively.  
    For $\bm \tau\in \widetilde{\Sigma}_H$, it holds that $\bm \tau|_{K_j}\in P_{k}(K_j;\mathbb S)$, and the normal components $\bm \tau\mathbf n_e(\mathbf x_e)$ remain globally continuous at $\mathbf x_e$. As $\bm \tau$ is continuous within each triangle $K_1$ and $K_2$, the pure tangential component $\mathbf t_e^{\mathrm T}\bm \tau\mathbf t_e$ remains continuous within $\omega_{\mathbf x_e}^+$ and $\omega_{\mathbf x_e}^-$, respectively. Thus $\bm \tau\in \widetilde{\Sigma}_h$ follows. This concludes the proof. 
\end{proof}

The extended mixed problem with the homogeneous clamped boundary condition seeks $(\bm \sigma_h, w_h)\in \widetilde{\Sigma}_h\times U_h$ such that 
\begin{equation}\label{clamped:bd:dis}
 \begin{aligned}
    (\mathbb C^{-1}\bm \sigma_h, \bm \tau_h) -(\operatorname{div}\operatorname{div}\bm \tau_h, w_h) &= 0\quad&&\text{for all}~~\bm \tau_h\in \widetilde{\Sigma}_h,\\
    (\operatorname{div}\operatorname{div}\bm \sigma_h, v_h) &= (f,v_h)\quad&& \text{for all}~~v_h\in U_h.
\end{aligned}   
\end{equation}

\subsection{Quasi-orthogonality} 
One of the main task of optimality analysis is to prove the quasi-orthogonality. To this end, the following intermediate problem is introduced: find $(\underline{\bm \sigma}_h, \underline w_h)\in \widetilde{\Sigma}_h\times U_h$ such that 
\begin{equation}
    \label{intermediate:coarse:fine}
    \begin{aligned}
        (\mathbb C^{-1}\underline{\bm \sigma}_h \bm \tau_h)-(\operatorname{div}\operatorname{div}\bm \tau_h, \underline w_h)& = 0 \quad &&\text{for all}\, \, \bm \tau_h \in \widetilde{\Sigma}_h,\\
        (\operatorname{div}\operatorname{div}\underline{\bm \sigma}_h, v_h)& = (\mathcal Q_{H}f, v_h)\quad &&\text{for all}\, \, v_h\in U_h.
    \end{aligned}
\end{equation}
Here $\mathcal Q_{H}$ denotes the $L^2$ projection onto $U_H$. Let 
\[\operatorname{osc}(f,\mathcal T_H\backslash\mathcal T_h): =\left(\sum_{K\in {\mathcal T_H}\backslash \mathcal T_h}h_K^{4}\|f-\mathcal Q_{H}f\|_{0,K}^2\right)^{\frac{1}{2}}. \]
\begin{lemma}\label{quasi-orth:pre}
For $(\bm \sigma_h, w_h)\in \widetilde{\Sigma}_h\times U_h$ solving \eqref{clamped:bd:dis} and $(\underline{\bm \sigma}_h, \underline{w}_h)\in \widetilde{\Sigma}_h\times U_h$ solving \eqref{intermediate:coarse:fine}, it holds
\begin{equation*}
    \|\bm \sigma_h-\underline{\bm \sigma}_h\|_{\mathbb C^{-1}}+|w_h-\underline{w}_h|_{2,h}\lesssim\operatorname{osc}(f, \mathcal T_H\backslash\mathcal T_h).
\end{equation*}
\end{lemma}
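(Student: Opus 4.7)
The plan is to follow the standard duality argument for mixed saddle-point problems, combined with the discrete inf-sup condition \eqref{discrete:inf:sup:post} (which is inherited by $\widetilde{\Sigma}_h \supset \Sigma_h$) and the orthogonality properties of $\mathcal Q_H$. Subtract \eqref{intermediate:coarse:fine} from \eqref{clamped:bd:dis} and set $\bm e_\sigma := \bm \sigma_h - \underline{\bm \sigma}_h \in \widetilde{\Sigma}_h$ and $e_w := w_h - \underline{w}_h \in U_h$, yielding the error system
\begin{align*}
    (\mathbb C^{-1}\bm e_\sigma, \bm \tau_h) - (\operatorname{div}\operatorname{div} \bm \tau_h, e_w) &= 0 \quad \text{for all } \bm \tau_h \in \widetilde{\Sigma}_h,\\
    (\operatorname{div}\operatorname{div} \bm e_\sigma, v_h) &= (f - \mathcal Q_H f, v_h) \quad \text{for all } v_h \in U_h.
\end{align*}

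Testing the first equation with $\bm \tau_h = \bm e_\sigma$ and the second with $v_h = e_w$ and adding gives the energy identity
\[
    \|\bm e_\sigma\|_{\mathbb C^{-1}}^2 = (f - \mathcal Q_H f, e_w).
\]
To control the right-hand side, I would exploit the $L^2$-orthogonality of $\mathcal Q_H$: since $\mathcal Q_H e_w \in U_H$ and $f - \mathcal Q_H f \perp U_H$,
\[
    (f - \mathcal Q_H f, e_w) = (f - \mathcal Q_H f, e_w - \mathcal Q_H e_w).
\]
The crucial observation is that on any element $K \in \mathcal T_H \cap \mathcal T_h$ (i.e., unrefined from the coarse to the fine mesh), the restriction $e_w|_K \in P_{k-2}(K)$ is preserved by $\mathcal Q_H$, so $e_w - \mathcal Q_H e_w$ vanishes identically on $K$. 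Hence the sum reduces to contributions from $K \in \mathcal T_H \setminus \mathcal T_h$, where I would apply a local Bramble--Hilbert/scaling argument to obtain $\|e_w - \mathcal Q_H e_w\|_{0,K} \lesssim h_K^2 |e_w|_{2,h,K}$, with $|\cdot|_{2,h,K}$ the local restriction of the mesh-dependent norm that accounts for both the piecewise $H^2$-seminorm of the children and the jumps across edges interior to $K$. A Cauchy--Schwarz summation then yields
\[
    (f - \mathcal Q_H f, e_w) \lesssim \operatorname{osc}(f, \mathcal T_H \setminus \mathcal T_h)\, |e_w|_{2,h}.
\]

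For the $w$-error, I would apply the discrete inf-sup condition \eqref{discrete:inf:sup:post} to $e_w \in U_h$ and use the first error equation to rewrite the supremum,
\[
    |e_w|_{2,h} \lesssim \sup_{\bm \tau_h \in \widetilde{\Sigma}_h \setminus \{0\}} \frac{(\operatorname{div}\operatorname{div} \bm \tau_h, e_w)}{\|\bm \tau_h\|_0} = \sup_{\bm \tau_h \in \widetilde{\Sigma}_h \setminus \{0\}} \frac{(\mathbb C^{-1} \bm e_\sigma, \bm \tau_h)}{\|\bm \tau_h\|_0} \lesssim \|\bm e_\sigma\|_{\mathbb C^{-1}},
\]
where the validity of \eqref{discrete:inf:sup:post} on the enlarged space $\widetilde{\Sigma}_h$ follows from the inclusion $\Sigma_h \subset \widetilde{\Sigma}_h$. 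Combining the last three displays and absorbing $\|\bm e_\sigma\|_{\mathbb C^{-1}}$ yields $\|\bm e_\sigma\|_{\mathbb C^{-1}} \lesssim \operatorname{osc}(f, \mathcal T_H \setminus \mathcal T_h)$, and then the same bound for $|e_w|_{2,h}$ via the inf-sup step.

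The main technical obstacle I anticipate is the local approximation estimate $\|v - \mathcal Q_H v\|_{0,K} \lesssim h_K^2 |v|_{2,h,K}$ for $v \in U_h$ on a refined coarse element $K \in \mathcal T_H \setminus \mathcal T_h$. Standard Bramble--Hilbert is not immediately applicable because $v|_K$ is only piecewise polynomial on the children of $K$ and may jump across their shared edges; the jump contributions in $|\cdot|_{2,h}$ must be harnessed through a scaling argument on the reference macro-element together with a Poincaré-type inequality for broken polynomial spaces that are orthogonal to $P_{k-2}(K)$, which is where the factor $h_K^2$ (matching the $h_K^4$ weight in $\operatorname{osc}$) originates.
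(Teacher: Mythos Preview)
Your proof is correct and uses the same core ingredients as the paper: the subtracted error system, the $L^2$-orthogonality of $\mathcal Q_H$ to localize the residual to refined elements, and the broken Bramble--Hilbert estimate $\|v_h-\mathcal Q_H v_h\|_{0,K}\lesssim h_K^2|v_h|_{2,h,K}$ on $K\in\mathcal T_H\setminus\mathcal T_h$ (which the paper cites from \cite[Chapter~10.6]{brennerscott}).

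The only structural difference is how stability is invoked. The paper applies a single Brezzi-type stability bound, namely \cite[(3.11)]{hu2021family}, which controls $\|\Phi\|_{\mathbb C^{-1}}+|\psi|_{2,h}$ simultaneously in terms of the residual of the full system. You instead split the argument: first derive the energy identity $\|\bm e_\sigma\|_{\mathbb C^{-1}}^2=(f-\mathcal Q_Hf,e_w)$ by testing with the errors themselves, then use the inf-sup condition \eqref{discrete:inf:sup:post} together with the first error equation to bound $|e_w|_{2,h}\lesssim\|\bm e_\sigma\|_{\mathbb C^{-1}}$, and close by absorption. Your route is slightly more elementary and self-contained (it avoids quoting the combined a~priori stability), while the paper's route is one line shorter since the Brezzi stability is already on hand from the a~priori analysis; both yield exactly the same estimate.
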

\begin{proof}
    Let $\Phi: = \bm \sigma_h-\underline{\bm \sigma}_h$ and $\psi: = w_h-\underline{w}_h$. Note that $(f, v_h) = (\mathcal Q_{h}f,  v_h)$ for all $v_h\in U_h$. Subtracting \eqref{intermediate:coarse:fine} from \eqref{clamped:bd:dis} leads to
    \begin{equation}\label{inter:minus}
        \begin{aligned}
      (\mathbb C^{-1}\Phi,\bm \tau_h)-(\operatorname{div}\operatorname{div}\bm \tau_h, \psi)& = 0 \quad &&\text{for all}\, \, \bm \tau_h \in \widetilde{\Sigma}_h,\\
        (\operatorname{div}\operatorname{div}\Phi, v_h)& =( \mathcal Q_{h}f-\mathcal Q_{H}f, v_h)\quad &&\text{for all}\, \, v_h\in U_h. 
    \end{aligned}
    \end{equation}
    Using the stability result as in \cite[(3.11)]{hu2021family}, one can obtain 
    \begin{equation}\label{inf-sup:dis:norm}
        \begin{aligned}
        \|\Phi\|_{\mathbb C^{-1}}+|\psi|_{2,h}\lesssim\sup_{0\neq\bm \tau_h\in \widetilde{\Sigma}_h\atop 0\neq v_h\in U_h}\frac{(\mathbb C^{-1}\Phi, \bm \tau_h)-(\operatorname{div}\operatorname{div}\bm \tau_h, \psi)+(\operatorname{div}\operatorname{div}\Phi, v_h)}{\|\bm \tau_h\|_{0}+|v_h|_{2,h}}.
    \end{aligned}
    \end{equation}
Given any $v_h\in U_h$, for any $K\in\mathcal T_H$, by \cite[Chapter 10.6]{brennerscott}, there holds 
\begin{equation*}
    \|v_h-\mathcal Q_H v_h\|_{0,K}\lesssim h_K^2\left(|v_h|^2_{2,K}+\sum_{e\in\mathcal E_h, e\subset K}\left(h_e^{-3}\|[\![v_h]\!]\|^2_{0,e}+h_e^{-1}\|[\![\partial_{n_e}v_h]\!]\|^2_{0,e}\right)\right)^\frac{1}{2}.
\end{equation*}
This, \eqref{inter:minus}--\eqref{inf-sup:dis:norm}, and the Cauchy-Schwarz inequality conclude
\begin{align*}
    \|\Phi\|_{\mathbb C^{-1}}+|\psi|_{2,h}&\lesssim\sup_{0\neq\bm \tau_h\in \widetilde{\Sigma}_h\atop 0\neq v_h\in U_h}\frac{(\mathcal Q_{h}f-\mathcal Q_{H}f, v_h)}{\|\bm \tau_h\|_{0}+|v_h|_{2,h}}\\&=\sup_{0\neq\bm \tau_h\in \widetilde{\Sigma}_h\atop 0\neq v_h\in U_h}\frac{(f-\mathcal Q_{H}f, v_h-\mathcal Q_H v_h)}{\|\bm \tau_h\|_{0}+|v_h|_{2,h}}\lesssim\operatorname{osc}(f,\mathcal T_H\backslash\mathcal T_h).
\end{align*}   
\end{proof}

\begin{theorem}[Quasi-orthogonality]\label{quasi:orthogonality}
    For any $0<\delta<1$, there exists some constant $\mathcal C>0$ such that 
    \begin{equation*}
        (1-\delta)\|\bm \sigma-\bm \sigma_h\|_{\mathbb C^{-1}}^2\leq \|\bm \sigma-\bm \sigma_H\|_{\mathbb C^{-1}}^2-\|\bm \sigma_h-\bm \sigma_H\|_{\mathbb C^{-1}}^{2}+\frac{\mathcal C}{\delta}\operatorname{osc}^2(f, \mathcal T_H\backslash {\mathcal T_h}).
    \end{equation*}
\end{theorem}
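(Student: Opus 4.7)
The plan is to route through the intermediate pair $(\underline{\bm \sigma}_h, \underline{w}_h)\in\widetilde{\Sigma}_h\times U_h$ from \eqref{intermediate:coarse:fine}, which solves the fine-mesh problem with right-hand side $\mathcal Q_H f$ in place of $f$. The starting observation is that $\operatorname{div}\operatorname{div}\underline{\bm \sigma}_h = \mathcal Q_H f = \operatorname{div}\operatorname{div}\bm \sigma_H$: the first equality holds because $\mathcal Q_H f\in U_H\subset U_h$ is fixed by the $L^2$ projection $\mathcal Q_h$, and the second follows by testing the coarse discrete problem against any $v_H\in U_H$ (which turns $(f,v_H)$ into $(\mathcal Q_H f, v_H)$). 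Consequently $\operatorname{div}\operatorname{div}(\underline{\bm \sigma}_h-\bm \sigma_H)=0$.

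Thanks to the nestedness $\widetilde{\Sigma}_H\subset\widetilde{\Sigma}_h$ proven in Section \ref{relax:vertex}, the difference $\underline{\bm \sigma}_h - \bm \sigma_H$ lies in $\widetilde{\Sigma}_h$ and is therefore an admissible test function in the fine problem. Subtracting \eqref{clamped:bd:dis} from \eqref{clamped:bd:con} and from \eqref{intermediate:coarse:fine}, then choosing $\bm \tau_h = \underline{\bm \sigma}_h - \bm \sigma_H$, the divdiv-vanishing property kills both right-hand sides and delivers the two Galerkin orthogonalities $(\mathbb C^{-1}(\bm \sigma-\underline{\bm \sigma}_h),\underline{\bm \sigma}_h-\bm \sigma_H)=0$ and $(\mathbb C^{-1}(\underline{\bm \sigma}_h-\bm \sigma_h),\underline{\bm \sigma}_h-\bm \sigma_H)=0$. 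Expanding the squares of $\bm \sigma-\bm \sigma_H$ and $\bm \sigma_h-\bm \sigma_H$ in the $\mathbb C^{-1}$ inner product after adding and subtracting $\underline{\bm \sigma}_h$, these orthogonalities kill the cross terms and produce two Pythagorean identities sharing the common term $\|\underline{\bm \sigma}_h-\bm \sigma_H\|_{\mathbb C^{-1}}^2$. Subtracting them cancels this common term and leaves
\begin{equation*}
\|\bm \sigma-\bm \sigma_H\|_{\mathbb C^{-1}}^2 - \|\bm \sigma_h-\bm \sigma_H\|_{\mathbb C^{-1}}^2 = \|\bm \sigma-\underline{\bm \sigma}_h\|_{\mathbb C^{-1}}^2 - \|\bm \sigma_h-\underline{\bm \sigma}_h\|_{\mathbb C^{-1}}^2.
\end{equation*}

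To close the estimate, I would split $\bm \sigma - \bm \sigma_h = (\bm \sigma-\underline{\bm \sigma}_h) + (\underline{\bm \sigma}_h-\bm \sigma_h)$ and apply Young's inequality with parameter $\epsilon=\delta/(1-\delta)$ to obtain $(1-\delta)\|\bm \sigma-\bm \sigma_h\|_{\mathbb C^{-1}}^2 \leq \|\bm \sigma-\underline{\bm \sigma}_h\|_{\mathbb C^{-1}}^2 + ((1-\delta)/\delta)\|\underline{\bm \sigma}_h-\bm \sigma_h\|_{\mathbb C^{-1}}^2$. Substituting the identity above for $\|\bm \sigma-\underline{\bm \sigma}_h\|_{\mathbb C^{-1}}^2$ and invoking Lemma \ref{quasi-orth:pre} to bound $\|\underline{\bm \sigma}_h-\bm \sigma_h\|_{\mathbb C^{-1}}^2 \lesssim \operatorname{osc}^2(f,\mathcal T_H\setminus\mathcal T_h)$ absorbs every remaining term into $(\mathcal C/\delta)\operatorname{osc}^2(f,\mathcal T_H\setminus\mathcal T_h)$. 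The central point of the plan is structural rather than computational: the entire argument hinges on $\underline{\bm \sigma}_h - \bm \sigma_H$ being an admissible test function, which would fail for the original space $\Sigma_h$ because of its $C^0$ vertex continuity and is precisely the reason the extended, nested space $\widetilde{\Sigma}_h$ was introduced in Section \ref{relax:vertex}.
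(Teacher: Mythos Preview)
Your proposal is correct and follows essentially the same approach as the paper: both arguments hinge on $\operatorname{div}\operatorname{div}(\underline{\bm \sigma}_h-\bm \sigma_H)=0$ together with the nestedness $\widetilde{\Sigma}_H\subset\widetilde{\Sigma}_h$, invoke Lemma~\ref{quasi-orth:pre} to control $\|\bm \sigma_h-\underline{\bm \sigma}_h\|_{\mathbb C^{-1}}$, and conclude with Young's inequality. The only cosmetic difference is that the paper handles the cross term $(\mathbb C^{-1}(\bm \sigma-\bm \sigma_h),\bm \sigma_h-\bm \sigma_H)$ directly, while you reorganize the same orthogonalities into two Pythagorean identities before subtracting; the resulting bound is identical.
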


\begin{proof}
The second equation in \eqref{intermediate:coarse:fine} shows that $ \operatorname{div}\operatorname{div}(\underline{\bm \sigma}_h-\bm \sigma_H)=0$. 
Accordingly, assigning $\underline{\bm \sigma}_h-\bm \sigma_H$ to $\bm \tau_h$ in \eqref{mixed:bd:con} and \eqref{clamped:bd:dis} respectively leads to $(\mathbb C^{-1}\bm \sigma, \underline{\bm \sigma}_h-\bm \sigma_H)=0$ and $(\mathbb C^{-1}\bm \sigma_h, \underline{\bm \sigma}_h-\bm \sigma_H)=0$. Thus 
\begin{align*}
    (\mathbb C^{-1}(\bm \sigma-\bm \sigma_h), {\bm \sigma}_h-\bm \sigma_H)&= (\mathbb C^{-1}(\bm \sigma-\bm \sigma_h), {\bm \sigma}_h-\underline{\bm \sigma}_h+\underline{\bm \sigma}_h-\bm \sigma_H)\\
    &=(\mathbb C^{-1}(\bm \sigma-\bm \sigma_h), {\bm \sigma}_h-\underline{\bm \sigma}_h).
\end{align*}
This and Lemma \ref{quasi-orth:pre} give rise to 
\begin{equation*}
    (\mathbb C^{-1}(\bm \sigma-\bm \sigma_h), {\bm \sigma}_h-\bm \sigma_H)\leq \sqrt{\mathcal C}\|\bm \sigma - \bm \sigma_h\|_{\mathbb C^{-1}}\operatorname{osc}(f, \mathcal T_H\backslash\mathcal T_h).
\end{equation*}
This combined with Young's inequality concludes the proof. 
\end{proof}
\subsection{Discrete reliability} 
Thanks to the following extended $H^1$-conforming finite element space, the discrete reliability can be established, which is another main task of optimality analysis. Recall the subdomains $\omega_{\mathbf x_e}^+$ and $\omega_{\mathbf x_e}^-$. The extended $H^1$-conforming space reads
\begin{align*}
    \widetilde{V}_h=\left\{  \bm \phi \in H^1(\operatorname{div},\Omega;\mathbb R^2): \,  \bm \phi|_K\in P_{k+1}(K;\mathbb R^2)\, \, \text{for all}\, \, K\in \mathcal T_h,\right.\\
     \bm \phi(\mathbf x) \, \, \text{is continuous at all vertices }\mathbf x\in \mathcal V_h,\\
    \nabla  \bm \phi(\mathbf x) \, \, \text{is continuous at each initial vertex}\, \, \mathbf x\in \mathcal V_0, \\
\text{and each boundary vertex}\, \, \mathbf x\in\mathcal V_h\backslash\mathcal V_h(\Omega), \\
\mathbf n_e^{\mathrm T}\nabla  \bm \phi \mathbf  n_e,  \mathbf n_e^{\mathrm T}\nabla  \bm \phi\mathbf t_e, \, \, \text{and}\, \, \mathbf t_e^{\mathrm T}\nabla  \bm \phi \mathbf t_e\, \, \text{are continuous at each internal vertex}\, \, \mathbf x_e\in \mathcal V_h(\Omega)\backslash \mathcal V_0, \\
\left.\mathbf t_e^{\mathrm T}\nabla  \bm \phi \mathbf n_e\, \, \text{is continuous at}\, \, \mathbf x_e\, \text{in}\, \, \omega_{\mathbf x_e}^+ \, \, \text{and at}\, \,\mathbf x_e\, \text{in}\, \, \omega_{\mathbf x_e}^- \, \, \text{for all}\, \, \mathbf x_e\in \mathcal V_h(\Omega)\backslash\mathcal V_0\right\}.
\end{align*}
\begin{lemma}\label{dim:sigma:V:extended}
    The dimensions of $\widetilde{\Sigma}_h$ and $\widetilde{V}_h$ are
    \begin{equation*}
        \operatorname{dim}\widetilde{\Sigma}_h = \operatorname{dim}\Sigma_h+\#(\mathcal V_h(\Omega)\backslash\mathcal V_0), \quad \operatorname{dim}\widetilde{V}_h= \operatorname{dim}V_h+\# (\mathcal V_h(\Omega)\backslash\mathcal V_0).
    \end{equation*}
\end{lemma}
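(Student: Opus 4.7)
The plan is to observe that $\Sigma_h\subset\widetilde{\Sigma}_h$ and $V_h\subset\widetilde{V}_h$, and then quantify the extra freedom gained from relaxing one scalar continuity condition at each interior new vertex. For any vertex that is either initial ($\mathbf x\in\mathcal V_0$) or on the boundary, the continuity constraints defining $\widetilde{\Sigma}_h$ and $\Sigma_h$ coincide, because in the latter case $\omega_{\mathbf x_e}^+$ is empty and the patch condition collapses to a single-valued requirement. Only at each $\mathbf x_e\in\mathcal V_h(\Omega)\setminus\mathcal V_0$ do the two spaces differ: $\Sigma_h$ requires the full symmetric matrix $\bm\tau(\mathbf x_e)$ (three scalar conditions) to be single-valued, whereas $\widetilde{\Sigma}_h$ only requires the two components of $\bm\tau\mathbf n_e(\mathbf x_e)$ to be single-valued and permits $\mathbf t_e^{{\mathrm T}}\bm\tau\mathbf t_e$ to take one value on $\omega_{\mathbf x_e}^+$ and a possibly different value on $\omega_{\mathbf x_e}^-$ (two scalar conditions), producing one net extra scalar per such vertex.

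To convert this count into a rigorous dimension statement, I would introduce, for each $\mathbf x_e\in\mathcal V_h(\Omega)\setminus\mathcal V_0$, the jump functional
$$\ell_{\mathbf x_e}:\widetilde{\Sigma}_h\to\mathbb R,\qquad \ell_{\mathbf x_e}(\bm\tau):=\mathbf t_e^{{\mathrm T}}\bm\tau(\mathbf x_e)\mathbf t_e\big|_{\omega_{\mathbf x_e}^+}-\mathbf t_e^{{\mathrm T}}\bm\tau(\mathbf x_e)\mathbf t_e\big|_{\omega_{\mathbf x_e}^-}.$$
By construction $\Sigma_h=\{\bm\tau\in\widetilde{\Sigma}_h:\ell_{\mathbf x_e}(\bm\tau)=0\ \text{for all }\mathbf x_e\in\mathcal V_h(\Omega)\setminus\mathcal V_0\}$, so the rank-nullity theorem reduces the first identity to showing that the collection $\{\ell_{\mathbf x_e}\}$ is linearly independent on $\widetilde{\Sigma}_h$, i.e.\ has rank exactly $\#(\mathcal V_h(\Omega)\setminus\mathcal V_0)$.

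Linear independence would be established by exhibiting, for each $\mathbf x_e$, a dual basis function $\bm\tau^{(\mathbf x_e)}\in\widetilde{\Sigma}_h$ supported in $\overline{\omega_{\mathbf x_e}^+\cup\omega_{\mathbf x_e}^-}$ with $\ell_{\mathbf x_e'}(\bm\tau^{(\mathbf x_e)})=\delta_{\mathbf x_e\mathbf x_e'}$ and all other vertex, edge, and interior degrees of freedom of $\Sigma_h$ vanishing. The natural candidate is obtained by taking the vertex basis function of $\Sigma_h$ associated with the single DOF $\mathbf t_e^{{\mathrm T}}\bm\tau(\mathbf x_e)\mathbf t_e$ (given explicitly in the appendix via \cite{hu2021family}) and setting it to zero on $\omega_{\mathbf x_e}^-$; the weakened continuity in $\widetilde{\Sigma}_h$ makes this piecewise object admissible, while $H(\operatorname{div}\operatorname{div})$-conformity across edges is preserved because the relaxation touches only the pure tangential-tangential value at the vertex and not the normal trace $\bm\tau\mathbf n_e$ nor $\mathbf n_e^{{\mathrm T}}\operatorname{div}\bm\tau$ on any edge.

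The argument for $\widetilde V_h$ is structurally identical: the only component of $\nabla\bm\phi(\mathbf x_e)$ whose continuity is relaxed at an interior new vertex $\mathbf x_e$ is $\mathbf t_e^{{\mathrm T}}\nabla\bm\phi\,\mathbf n_e$, giving one scalar jump functional per $\mathbf x_e\in\mathcal V_h(\Omega)\setminus\mathcal V_0$; independence follows by constructing, from the corresponding vertex basis function of $V_h$ in \cite{hu2021family}, a patch-localized dual function supported in $\overline{\omega_{\mathbf x_e}^+\cup\omega_{\mathbf x_e}^-}$. I expect the main obstacle to be the verification that these localized candidates genuinely lie in the extended spaces: one must check both that the remaining (unrelaxed) continuity conditions of $\widetilde{\Sigma}_h$ and $\widetilde V_h$ are satisfied at $\mathbf x_e$ and that the edge traces involved in $H(\operatorname{div}\operatorname{div})$- respectively $H^1$-conformity are not disturbed on the edges emanating from $\mathbf x_e$. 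Once this local existence is in place, rank-nullity closes both identities simultaneously.
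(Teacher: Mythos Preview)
Your proposal is correct and is essentially the same approach as the paper's own proof: both identify the single relaxed scalar constraint at each $\mathbf x_e\in\mathcal V_h(\Omega)\setminus\mathcal V_0$ and use the split of the vertex basis function $\bm\tau_{\mathbf x_e,3}$ (with $\mathbf t_e^{\mathrm T}\bm\tau_{\mathbf x_e,3}(\mathbf x_e)\mathbf t_e=1$, $\bm\tau_{\mathbf x_e,3}(\mathbf x_e)\mathbf n_e=0$) into half-patch pieces $\bm\tau_{\mathbf x_e,3}^{\pm}$ supported on $\omega_{\mathbf x_e}^{\pm}$ to realize the extra degree of freedom. The paper phrases this as $\widetilde\Sigma_h=\Sigma_h+\operatorname{span}\{\bm\tau_{\mathbf x_e,3}^{+},\bm\tau_{\mathbf x_e,3}^{-}\}$ and counts directly, while you package the same construction through jump functionals and rank--nullity; the verification step you flag (that the split pieces remain $H(\operatorname{div}\operatorname{div})$-conforming because only the pure tangential--tangential vertex value is affected) is exactly what the paper relies on as well.
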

\begin{proof}
Compared to the space ${\Sigma}_h$, where each vertex $\mathbf x_e\in \mathcal V_h(\Omega)\backslash\mathcal V_0$ has three global degrees of freedom, the extended space $\widetilde{\Sigma}_h$ increases this number to four. Assuming the basis functions associated to $\mathbf x_e$ in ${\Sigma}_h$ are $\bm \tau_{\mathbf x_e,1}, \bm \tau_{\mathbf x_e,2}$, and $\bm \tau_{\mathbf x_e,3}$ with $\mathbf t_e^{\mathrm T}\bm \tau_{\mathbf x_e,3}(\mathbf x_e)\mathbf t_e = 1$ and $\bm \tau_{\mathbf x_e,3}(\mathbf x_e)\mathbf n_e=0$. The corresponding basis functions of $\mathbf x_e$ in the extended space $\widetilde{\Sigma}_h$ are given by 
\begin{equation*}
    \bm \tau_{\mathbf x_e,1}(\mathbf x), \, \, \bm \tau_{\mathbf x_e,2}(\mathbf x), \, \, \bm \tau_{\mathbf x_e,3}^+(\mathbf x), \, \, \bm \tau_{\mathbf x_e,3}^-(\mathbf x),
\end{equation*}
where $\bm \tau_{\mathbf x_e,3}^+(\mathbf x)=\bm \tau_{\mathbf x_e,3}(\mathbf x)$ for $\mathbf x\in \omega_{\mathbf x}^+$ and otherwise vanishes, and $\bm \tau_{\mathbf x_e,3}^-(\mathbf x)$ is similarly defined for $\omega_{\mathbf x}^-$. Therefore, \[\widetilde{\Sigma}_h= \Sigma_h + \mathop{\operatorname{span}}\limits_{\mathbf x_e\in \mathcal V_h(\Omega)\backslash\mathcal V_0}\{\bm \tau_{\mathbf x_e,3}^+, \bm \tau_{\mathbf x_e,3}^-\},\] and thus $\operatorname{dim}\widetilde{\Sigma}_h = \operatorname{dim}\Sigma_h+\#(\mathcal V_h(\Omega)\backslash\mathcal V_0)$ follows. 

Similarly, the global degrees of freedom with respect to $\nabla \bm \phi$ at each internal vertex $\mathbf x_e\in \mathcal V_h(\Omega)\backslash\mathcal V_0$ increase from four in $V_h$ to five in $\widetilde{V}_h$. Therefore,  $\operatorname{dim}\widetilde{V}_h= \operatorname{dim}V_h+\# (\mathcal V_h(\Omega)\backslash\mathcal V_0)$.
\end{proof}
With Lemma \ref{dim:sigma:V:extended}, the following lemma establishes their relationship. 
\begin{lemma}\label{extend:hemholtz}
    It holds that  $\operatorname{sym}\operatorname{curl}\widetilde{V}_h = \widetilde{\Sigma}_h\cap  \operatorname{ker}(\operatorname{div}\operatorname{div})$.
 \end{lemma}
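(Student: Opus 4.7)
The plan is to establish the identity in two steps: first prove the inclusion $\operatorname{sym}\operatorname{curl}\widetilde V_h \subset \widetilde\Sigma_h\cap\ker(\operatorname{div}\operatorname{div})$ by a direct verification at the vertex level, and then close the gap by a dimension count, modeled on the proof of Theorem \ref{Dis:exact:mbd}.

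For the inclusion, $\operatorname{div}\operatorname{div}\operatorname{sym}\operatorname{curl}\equiv 0$ pointwise is immediate, so the content is verifying that each $\bm\phi\in\widetilde V_h$ has $\operatorname{sym}\operatorname{curl}\bm\phi\in\widetilde\Sigma_h$. The edge jump requirements $[\![\operatorname{sym}\operatorname{curl}\bm\phi\,\mathbf n_e]\!]_e=\mathbf 0$ and $[\![\mathbf n_e^{\mathrm T}\operatorname{div}\operatorname{sym}\operatorname{curl}\bm\phi]\!]_e=0$ are inherited from the $H^1(\operatorname{div})$-regularity of $\widetilde V_h$ exactly as in the $V_h\to\Sigma_h$ case of \eqref{eq:DisComp}. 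Continuity of $\operatorname{sym}\operatorname{curl}\bm\phi$ at initial vertices $\mathbf x\in\mathcal V_0$ follows from the continuity of $\nabla\bm\phi$ at those vertices. The crucial new point is at a bisection vertex $\mathbf x_e\in\mathcal V_h(\Omega)\setminus\mathcal V_0$: resolving $\operatorname{sym}\operatorname{curl}\bm\phi$ in the $(\mathbf n_e,\mathbf t_e)$ basis and using \eqref{symcurlv:2} together with $\partial_\beta(\bm\phi\cdot\alpha)=\alpha^{\mathrm T}\nabla\bm\phi\,\beta$ for constant $\alpha,\beta\in\{\mathbf n_e,\mathbf t_e\}$, one finds
\begin{align*}
\mathbf n_e^{\mathrm T}(\operatorname{sym}\operatorname{curl}\bm\phi)\mathbf n_e &= \mathbf n_e^{\mathrm T}\nabla\bm\phi\,\mathbf t_e,\\
\mathbf t_e^{\mathrm T}(\operatorname{sym}\operatorname{curl}\bm\phi)\mathbf n_e &= \tfrac12\bigl(\mathbf t_e^{\mathrm T}\nabla\bm\phi\,\mathbf t_e-\mathbf n_e^{\mathrm T}\nabla\bm\phi\,\mathbf n_e\bigr),\\
\mathbf t_e^{\mathrm T}(\operatorname{sym}\operatorname{curl}\bm\phi)\mathbf t_e &= -\mathbf t_e^{\mathrm T}\nabla\bm\phi\,\mathbf n_e.
\end{align*}
The defining conditions of $\widetilde V_h$ make the first two right-hand sides continuous at $\mathbf x_e$, which yields continuity of $(\operatorname{sym}\operatorname{curl}\bm\phi)\mathbf n_e$ at $\mathbf x_e$, while $\mathbf t_e^{\mathrm T}\nabla\bm\phi\,\mathbf n_e$ is continuous only within each of $\omega_{\mathbf x_e}^\pm$, yielding exactly the relaxed continuity of $\mathbf t_e^{\mathrm T}(\operatorname{sym}\operatorname{curl}\bm\phi)\mathbf t_e$ required by $\widetilde\Sigma_h$. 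Hence $\operatorname{sym}\operatorname{curl}\bm\phi\in\widetilde\Sigma_h$.

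For equality, I count dimensions. Since $\Sigma_h\subset\widetilde\Sigma_h$ and the standard complex \eqref{eq:DisComp} gives $\operatorname{div}\operatorname{div}\Sigma_h=U_h$, also $\operatorname{div}\operatorname{div}\widetilde\Sigma_h=U_h$, so
\[
\dim\bigl(\widetilde\Sigma_h\cap\ker(\operatorname{div}\operatorname{div})\bigr)=\dim\widetilde\Sigma_h-\dim U_h.
\]
On the other hand, the kernel of $\operatorname{sym}\operatorname{curl}$ on $H^1(\Omega;\mathbb R^2)$ is $RT$ (via Korn applied to $\bm\phi^\perp$), so $\ker(\operatorname{sym}\operatorname{curl}|_{\widetilde V_h})=RT$ and $\dim\operatorname{sym}\operatorname{curl}\widetilde V_h=\dim\widetilde V_h-3$. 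Combining Lemma \ref{dim:sigma:V:extended} with the exactness of \eqref{eq:DisComp} yields
\[
\dim\widetilde\Sigma_h-\dim U_h = \dim\Sigma_h-\dim U_h + \#(\mathcal V_h(\Omega)\setminus\mathcal V_0)= \dim V_h-3+\#(\mathcal V_h(\Omega)\setminus\mathcal V_0)=\dim\widetilde V_h-3,
\]
so the two dimensions agree and the inclusion proved in Step 1 is an equality.

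The main obstacle is the vertex-level computation in Step 1: one must identify precisely which entry of $\nabla\bm\phi$ in the edge-adapted frame is allowed to jump in $\widetilde V_h$ and confirm, via the algebraic identities above, that this jump passes through $\operatorname{sym}\operatorname{curl}$ to exactly the single entry $\mathbf t_e^{\mathrm T}\bm\tau\mathbf t_e$ that is allowed to jump in $\widetilde\Sigma_h$, with all other components remaining continuous. Once that algebraic alignment is in place, the dimension count is routine.
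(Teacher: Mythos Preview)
Your proof is correct and follows essentially the same approach as the paper: an inclusion via the vertex-level identities expressing the $(\mathbf n_e,\mathbf t_e)$-components of $\operatorname{sym}\operatorname{curl}\bm\phi$ in terms of those of $\nabla\bm\phi$, followed by a dimension count using Lemma~\ref{dim:sigma:V:extended} and the exactness of the standard discrete complex~\eqref{eq:DisComp}. Your identities even carry the correct $\tfrac12$ and sign on the off-diagonal and $\mathbf t\mathbf t$-components (the paper's displayed identities omit these harmless constants), and your use of $\dim RT=3$ is correct, whereas the paper's text contains an apparent typo $-4$ at that step.
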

\begin{proof}
 For $\bm \phi_h\in \widetilde{V}_h$, the identities $\mathbf n_e^{\mathrm T}(\operatorname{sym}\operatorname{curl}\bm \phi_h)\mathbf n_e=\mathbf n_e^{\mathrm T} \nabla\bm \phi_h \mathbf t_e$,
  $\mathbf t_e^{\mathrm T}(\operatorname{sym}\operatorname{curl}\bm \phi_h)\mathbf n_e = \mathbf t_e^{\mathrm T} \nabla\bm \phi_h \mathbf t_e-\mathbf n_e^{\mathrm T} \nabla\bm \phi_h \mathbf n_e$, and $\mathbf t_e^{\mathrm T}(\operatorname{sym}\operatorname{curl}\bm \phi_h)\mathbf t_e=\mathbf t_e^{\mathrm T} \nabla\bm \phi_h \mathbf n_e$ show that 
 $(\operatorname{sym}\operatorname{curl}\bm \phi_h)\mathbf n_e$ is continuous at internal vertices $\mathbf x_e\in \mathcal V_h(\Omega)\backslash\mathcal V_0$ and $ \mathbf t_e^{\mathrm T}(\operatorname{sym}\operatorname{curl}\bm \phi_h)\mathbf t_e$ is continuous at internal vertices $\mathbf x_e$ in $\omega_{\mathbf x_e}^+$ and at $\mathbf x_e$ in $\omega_{\mathbf x_e}^-$ for all $\mathbf x_e\in \mathcal V_h(\Omega)\backslash\mathcal V_0$. This plus the exactness established in Lemma \ref{exact:divdiv} yields $\operatorname{sym}\operatorname{curl}\widetilde{V}_h\subset \widetilde{\Sigma}_h\cap  \operatorname{ker}(\operatorname{div}\operatorname{div})$.

On the other hand, counting dimensions with Lemma \ref{dim:sigma:V:extended} gives rise to
\begin{align*}
    \operatorname{dim} \widetilde{\Sigma}_h\cap \operatorname{ker}(\operatorname{div}\operatorname{div}) = \operatorname{dim} \widetilde{\Sigma}_h-\operatorname{dim}U_h=\operatorname{dim} {\Sigma}_h+\# (\mathcal V_h(\Omega)\backslash\mathcal V_0)-\operatorname{dim}U_h,
\end{align*}
and 
\begin{align*}
\operatorname{dim}\operatorname{sym}\operatorname{curl}\widetilde{V}_h = \operatorname{dim} \widetilde{V}_h-\operatorname{dim}RT=
\operatorname{dim} {V}_h+\# (\mathcal V_h(\Omega)\backslash\mathcal V_0)-4.
\end{align*}
Using this and $\operatorname{dim}\Sigma_h-\operatorname{dim}U_h= \operatorname{dim}V_h-4$ derived from the exactness of the discrete divdiv complex in Lemma \ref{exact:divdiv}, one can obtain $\operatorname{dim} \widetilde{\Sigma}_h\cap \operatorname{ker}(\operatorname{div}\operatorname{div})=\operatorname{dim}\operatorname{sym}\operatorname{curl}\widetilde{V}_h$. Thus $\operatorname{sym}\operatorname{curl}\widetilde{V}_h=\widetilde{\Sigma}_h\cap \operatorname{ker}(\operatorname{div}\operatorname{div})$ follows. 
\end{proof}

Analogous to the interpolation operator in \eqref{int:v0-vh0}, a quasi-interpolation operator $\widetilde{I}_H:  \widetilde{V}_h\rightarrow \widetilde{V}_H$ for the extended $H^1$ conforming space is given to estimate $\|\bm \sigma_H-\underline{\bm \sigma}_h\|_{\mathbb C^{-1}}$ as in \cite[Section 3]{carstensen2021hierarchical}.  In particular, the $\kappa_i$ in \eqref{int:v0-vh0} corresponding to the degrees of freedom associated with any vertex or edge contained in some unrefined triangle or edge is chosen as that triangle or edge of the coarse mesh. Besides, for the degree of freedom $\mathbf t_e^{\mathrm T}\nabla \bm \phi\mathbf n_e|_{\omega_{\mathbf x_e}^+}$ at each vertex $\mathbf x_e\in \mathcal V_H(\Omega)\backslash \mathcal V_0$, the $\kappa_i$ is chosen as one triangle $K^+\in \omega_{\mathbf x_e}^+$, and similar for $\omega_{\mathbf x_e}^-$. Note that $\widetilde{I}_H\bm \phi_h|_{K} = \bm \phi_h|_K$ for any $K\in \mathcal T_H\cap \mathcal T_h$. Similar to the estimates for $I_h$ in \eqref{estimate:phi}, the following estimates hold:
\begin{equation}\label{estimate:widetilde:Ih}
    \|\bm \phi_h-\widetilde{I}_H\bm \phi_h\|_{0,K} \lesssim h_K |\bm \phi_h|_{1,S(K)}, \quad \|\bm \phi_h-\widetilde{I}_H\bm \phi_h\|_{0,e} \lesssim h_e^\frac{1}{2}|\bm \phi_h|_{1,S(\omega_e)}.
\end{equation}

With this quasi-interpolation operator $\widetilde{I}_H$, the following lemma can be demonstrated. 
\begin{lemma}\label{sigmaH:minus:sigmahath}
    Let $(\bm \sigma_H, w_H)\in \widetilde{\Sigma}_H\times U_H$ and $(\underline{\bm \sigma}_h, w_h)\in \widetilde{\Sigma}_h\times U_h$ solve \eqref{clamped:bd:dis} and \eqref{intermediate:coarse:fine} respectively. Then it holds 
    \begin{equation*}
        \|\bm \sigma_H-\underline{\bm \sigma}_h\|_{\mathbb C^{-1}}\lesssim \sum_{K\in\mathcal T_H\backslash\mathcal T_h}\left(h_K\|\operatorname{rot}(\mathbb C^{-1}\bm \sigma_H)\|_{0,K}+\sum_{e\in\mathcal E(K)}h_K^{\frac{1}{2}}\|[\![\mathbb C^{-1}\bm \sigma_H\mathbf t_e]\!]\|_{0,e}\right).
    \end{equation*}
\end{lemma}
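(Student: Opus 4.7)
The plan is to realize $\bm\sigma_H-\underline{\bm\sigma}_h$ as $\operatorname{sym}\operatorname{curl}$ of some $\bm\phi_h\in\widetilde V_h$ via Lemma \ref{extend:hemholtz}, and then employ a Galerkin orthogonality argument against the coarse interpolant $\widetilde I_H\bm\phi_h$. Because $\widetilde I_H$ acts as the identity on every triangle in $\mathcal T_H\cap\mathcal T_h$, the residual after elementwise integration by parts will automatically localize to the refined region $\mathcal T_H\backslash\mathcal T_h$, producing exactly the right-hand side of the claim.

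First I would verify $\bm\sigma_H-\underline{\bm\sigma}_h\in\widetilde\Sigma_h\cap\operatorname{ker}(\operatorname{div}\operatorname{div})$. Since $\operatorname{div}\operatorname{div}\bm\sigma_H\in U_H$ and $\operatorname{div}\operatorname{div}\underline{\bm\sigma}_h\in U_h$, the second equations of \eqref{clamped:bd:dis} on $\mathcal T_H$ and of \eqref{intermediate:coarse:fine} respectively read $\operatorname{div}\operatorname{div}\bm\sigma_H=\mathcal Q_Hf$ and $\operatorname{div}\operatorname{div}\underline{\bm\sigma}_h=\mathcal Q_Hf$; combined with the nestedness $\widetilde\Sigma_H\subset\widetilde\Sigma_h$, the claimed membership follows. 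Lemma \ref{extend:hemholtz} then supplies $\bm\phi_h\in\widetilde V_h$ with $\operatorname{sym}\operatorname{curl}\bm\phi_h=\bm\sigma_H-\underline{\bm\sigma}_h$; after normalizing modulo the three-dimensional kernel $RT$ of $\operatorname{sym}\operatorname{curl}$ and invoking Korn's inequality as at the end of the proof of Lemma \ref{leftarrow}, one obtains $|\bm\phi_h|_1\lesssim\|\bm\sigma_H-\underline{\bm\sigma}_h\|_{\mathbb C^{-1}}$.

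Next I would exploit Galerkin orthogonality. The first equation of \eqref{intermediate:coarse:fine} applied with $\bm\tau_h=\operatorname{sym}\operatorname{curl}\bm\phi_h\in\widetilde\Sigma_h\cap\operatorname{ker}(\operatorname{div}\operatorname{div})$ yields $(\mathbb C^{-1}\underline{\bm\sigma}_h,\operatorname{sym}\operatorname{curl}\bm\phi_h)=0$. Analogously, since $\widetilde I_H\bm\phi_h\in\widetilde V_H$ forces $\operatorname{sym}\operatorname{curl}\widetilde I_H\bm\phi_h\in\widetilde\Sigma_H\cap\operatorname{ker}(\operatorname{div}\operatorname{div})$ by the coarse-mesh analogue of Lemma \ref{extend:hemholtz}, the first equation of \eqref{clamped:bd:dis} on $\mathcal T_H$ gives $(\mathbb C^{-1}\bm\sigma_H,\operatorname{sym}\operatorname{curl}\widetilde I_H\bm\phi_h)=0$. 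Subtracting,
\[
\|\bm\sigma_H-\underline{\bm\sigma}_h\|_{\mathbb C^{-1}}^2=(\mathbb C^{-1}\bm\sigma_H,\operatorname{sym}\operatorname{curl}(\bm\phi_h-\widetilde I_H\bm\phi_h)).
\]
Integrating by parts elementwise on $\mathcal T_H$ and using the symmetry of $\mathbb C^{-1}\bm\sigma_H$ to trade $\operatorname{sym}\operatorname{curl}$ for $\operatorname{curl}$, the right-hand side becomes
\[
-\sum_{K\in\mathcal T_H}(\operatorname{rot}(\mathbb C^{-1}\bm\sigma_H),\bm\phi_h-\widetilde I_H\bm\phi_h)_K+\sum_{e\in\mathcal E_H}\langle[\![\mathbb C^{-1}\bm\sigma_H\mathbf t_e]\!],\bm\phi_h-\widetilde I_H\bm\phi_h\rangle_e,
\]
with boundary jumps interpreted as traces. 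The key simplification is $\widetilde I_H\bm\phi_h|_K=\bm\phi_h|_K$ for every $K\in\mathcal T_H\cap\mathcal T_h$, so both sums effectively collapse to $K\in\mathcal T_H\backslash\mathcal T_h$ and to edges of such $K$. Applying Cauchy-Schwarz, the interpolation estimates \eqref{estimate:widetilde:Ih}, and the bound on $|\bm\phi_h|_1$ from the previous step, and finally cancelling one factor of $\|\bm\sigma_H-\underline{\bm\sigma}_h\|_{\mathbb C^{-1}}$ from both sides, delivers the asserted estimate.

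The main obstacle I anticipate is the careful bookkeeping behind the localization: one must confirm that $\widetilde I_H$ is constructed so that every degree of freedom supported on an unrefined vertex or edge is evaluated through a coarse triangle or edge, ensuring $\widetilde I_H\bm\phi_h|_K=\bm\phi_h|_K$ on $K\in\mathcal T_H\cap\mathcal T_h$ and hence the essential reduction of the residual sum to $\mathcal T_H\backslash\mathcal T_h$. A minor but genuine subtlety is that the boundary edge contributions must be handled via the convention that boundary jumps collapse to traces, consistent with the definition of $\eta^2(\mathcal T_h,K)$ in \eqref{estimator:on:K} which sums over every $e\in\mathcal E(K)$ including those on $\partial\Omega$.
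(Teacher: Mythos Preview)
Your proposal is correct and follows essentially the same route as the paper's proof: identify $\bm\sigma_H-\underline{\bm\sigma}_h\in\widetilde\Sigma_h\cap\operatorname{ker}(\operatorname{div}\operatorname{div})$, lift it to $\operatorname{sym}\operatorname{curl}\bm\phi_h$ via Lemma~\ref{extend:hemholtz}, use the two Galerkin orthogonalities to reduce to $(\mathbb C^{-1}\bm\sigma_H,\operatorname{sym}\operatorname{curl}(\bm\phi_h-\widetilde I_H\bm\phi_h))$, localize to $\mathcal T_H\backslash\mathcal T_h$ via the identity property of $\widetilde I_H$ on unrefined elements, and finish with elementwise integration by parts, Cauchy--Schwarz, and~\eqref{estimate:widetilde:Ih}. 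Your explicit mention of normalizing $\bm\phi_h$ modulo $RT$ before invoking Korn's inequality is in fact slightly more careful than the paper, which simply asserts the bound $|\bm\phi_h|_1\lesssim\|\Phi\|_0$ alongside the citation of Lemma~\ref{extend:hemholtz}.
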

\begin{proof}
    Let $\Phi: = \bm \sigma_H-\underline{\bm \sigma}_h$. Note that $\Phi\in \widetilde{\Sigma}_h$ and $\operatorname{div}\operatorname{div}\Phi = 0$. Lemma \ref{extend:hemholtz} shows that there exists some $\bm \phi_h\in \widetilde{V}_h$ such that $\Phi= \operatorname{sym}\operatorname{curl}\bm \phi_h$ and $|\bm \phi_h|_{1,K}\lesssim \|\Phi\|_0$. Let $\Theta: = \bm \phi_h-\widetilde{I}_H \bm \phi_h$. Then $\Theta\in \widetilde{V}_h$, and $\Theta|_K = 0$ for all $K\in \mathcal T_H\cap \mathcal T_h$. 
    
    Assigning $\Phi$ to $\bm \tau_h$ in \eqref{intermediate:coarse:fine} results in $(\mathbb C^{-1}\underline{\bm \sigma}_h, \Phi) = 0$. The choice of $\bm \tau_H = \operatorname{sym}\operatorname{curl}(\widetilde{I}_H\bm \phi_h)$ in \eqref{clamped:bd:dis} leads to $(\mathbb C^{-1}\bm \sigma_H, \operatorname{sym}\operatorname{curl} \widetilde{I}_H\bm \phi_h)=0$. Therefore, 
    \begin{align*}
        \|\Phi\|_{\mathbb C^{-1}}^2  = (\mathbb C^{-1}(\bm \sigma_H-\underline{\bm \sigma}_h), \Phi)=  (\mathbb C^{-1}\bm \sigma_H, \Phi)=(\mathbb C^{-1}\bm \sigma_H, \operatorname{sym}\operatorname{curl}\bm \phi_h)\\
        =(\mathbb C^{-1}\bm \sigma_H, \operatorname{sym}\operatorname{curl}\Theta)=\sum_{K\in \mathcal T_H\backslash\mathcal T_h}(\mathbb C^{-1}\bm \sigma_H, \operatorname{sym}\operatorname{curl}\Theta)_K.
    \end{align*}
    An integration by parts, \eqref{estimate:widetilde:Ih}, and the Cauchy-Schwarz inequality lead to 
    \begin{align*}
        \|\Phi\|_{\mathbb C^{-1}}^2 &= \sum_{K\in \mathcal T_H\backslash\mathcal T_h}(\mathbb C^{-1}\bm \sigma_H, \operatorname{sym}\operatorname{curl}\Theta)_K=\sum_{K\in \mathcal T_H\backslash\mathcal T_h}(\mathbb C^{-1}\bm \sigma_H, \operatorname{curl}\Theta)_K\\
        &=\sum_{K\in \mathcal T_H\backslash\mathcal T_h}\left((\operatorname{rot}(\mathbb C^{-1}\bm \sigma_H), \Theta)_K-\langle\mathbb C^{-1}\bm \sigma_H\mathbf t, \Theta\rangle_{\partial K}\right)\\
        &\lesssim \sum_{K\in \mathcal T_H\backslash\mathcal T_h}\left(h_K\|\operatorname{rot}(\mathbb C^{-1}\bm \sigma_H)\|_{0,K}|\bm \phi_h|_{1,S(K)}+\sum_{e\subset \partial K}h_K^{\frac{1}{2}}\|[\![\mathbb C^{-1}\bm \sigma_H\mathbf t_e]\!]\|_{0,e}|\bm \phi_h|_{1, S(\omega_e)}\right)\\
        &\lesssim  \sum_{K\in\mathcal T_H\backslash\mathcal T_h}\left(h_K\|\operatorname{rot}(\mathbb C^{-1}\bm \sigma_H)\|_{0,K}+\sum_{e\in\mathcal E(K)}h_K^{\frac{1}{2}}\|[\![\mathbb C^{-1}\bm \sigma_H\mathbf t_e]\!]\|_{0,e}\right)\|\Phi\|_{\mathbb C^{-1}}.
    \end{align*}
Removing $\|\Phi\|_{\mathbb C^{-1}}$ from both sides of this inequality concludes the proof. 
\end{proof}

\begin{theorem}[Discrete Reliability]\label{discrete:reliability}
    There holds
    \begin{equation*}
     \|\bm \sigma_h- \bm \sigma_H\|^2_{\mathbb C^{-1}} \lesssim \eta^2(\mathcal T_H, \mathcal T_H\backslash \mathcal T_h).
    \end{equation*}
\end{theorem}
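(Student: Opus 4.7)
The plan is to prove this by a triangle inequality that interpolates between $\bm \sigma_h$ and $\bm \sigma_H$ through the intermediate solution $\underline{\bm \sigma}_h \in \widetilde{\Sigma}_h$ from \eqref{intermediate:coarse:fine}, and then apply Lemmas \ref{quasi-orth:pre} and \ref{sigmaH:minus:sigmahath} to the two resulting pieces. Concretely, I would start from
\begin{equation*}
    \|\bm \sigma_h - \bm \sigma_H\|^2_{\mathbb C^{-1}} \leq 2 \|\bm \sigma_h - \underline{\bm \sigma}_h\|^2_{\mathbb C^{-1}} + 2 \|\underline{\bm \sigma}_h - \bm \sigma_H\|^2_{\mathbb C^{-1}}.
\end{equation*}
The virtue of $\underline{\bm \sigma}_h$ is that it equilibrates the coarse-mesh data $\mathcal Q_H f$ in the fine space $\widetilde{\Sigma}_h$, so differences with $\bm \sigma_h$ pick up only data oscillation, while differences with $\bm \sigma_H$ are Galerkin in nature and can be localized to $\mathcal T_H \setminus \mathcal T_h$ via the nested structure $\widetilde{\Sigma}_H \subset \widetilde{\Sigma}_h$ established earlier.

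For the first summand, Lemma \ref{quasi-orth:pre} applied in its squared form yields
\begin{equation*}
    \|\bm \sigma_h - \underline{\bm \sigma}_h\|^2_{\mathbb C^{-1}} \lesssim \operatorname{osc}^2(f, \mathcal T_H \backslash \mathcal T_h) = \sum_{K \in \mathcal T_H \backslash \mathcal T_h} h_K^{4}\|f - \mathcal Q_H f\|^2_{0,K},
\end{equation*}
which is precisely the data-oscillation contribution contained in $\eta^2(\mathcal T_H, \mathcal T_H \backslash \mathcal T_h)$ on the coarse mesh. For the second summand, Lemma \ref{sigmaH:minus:sigmahath} provides an $\ell^1$-type bound over $K \in \mathcal T_H \backslash \mathcal T_h$ in terms of $h_K\|\operatorname{rot}(\mathbb C^{-1}\bm \sigma_H)\|_{0,K}$ and $h_K^{1/2}\|[\![\mathbb C^{-1}\bm \sigma_H \mathbf t_e]\!]\|_{0,e}$. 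To reach the quadratic form matching $\eta^2(\mathcal T_H, \cdot)$, I would revisit the penultimate line of that proof and apply Cauchy--Schwarz element-wise, exploiting the finite-overlap inequalities $\sum_{K \in \mathcal T_H \backslash \mathcal T_h} |\bm \phi_h|^2_{1,S(K)} \lesssim |\bm \phi_h|^2_1$ and $\sum_{K,e} |\bm \phi_h|^2_{1,S(\omega_e)} \lesssim |\bm \phi_h|^2_1$ guaranteed by shape regularity, together with $|\bm \phi_h|_1 \lesssim \|\bm \sigma_H - \underline{\bm \sigma}_h\|_{\mathbb C^{-1}}$. After cancellation this upgrades Lemma \ref{sigmaH:minus:sigmahath} to
\begin{equation*}
    \|\underline{\bm \sigma}_h - \bm \sigma_H\|^2_{\mathbb C^{-1}} \lesssim \sum_{K \in \mathcal T_H \backslash \mathcal T_h} \left( h_K^2 \|\operatorname{rot}(\mathbb C^{-1}\bm \sigma_H)\|^2_{0,K} + \sum_{e \in \mathcal E(K)} h_K \|[\![\mathbb C^{-1}\bm \sigma_H \mathbf t_e]\!]\|^2_{0,e} \right),
\end{equation*}
whose right-hand side, added to the oscillation bound, assembles exactly into $\eta^2(\mathcal T_H, \mathcal T_H \backslash \mathcal T_h)$ by \eqref{estimator:on:K}.

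The main subtlety is not the triangle inequality or the Cauchy--Schwarz step, but the fact that the entire argument rests critically on the extended bending moment space $\widetilde{\Sigma}_h$ introduced in Section \ref{relax:vertex}. Only because $\widetilde{\Sigma}_H \subset \widetilde{\Sigma}_h$ can one test \eqref{clamped:bd:dis} for $\bm \sigma_H$ against $\operatorname{sym}\operatorname{curl}(\widetilde I_H \bm \phi_h) \in \widetilde{\Sigma}_H$ and use it as the orthogonality underlying Lemma \ref{sigmaH:minus:sigmahath}; for the original non-nested $\Sigma_h$ this test function would generally not lie in the coarse discrete space, and the localization to $\mathcal T_H \setminus \mathcal T_h$ would fail. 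Once nestedness is in hand, the proof is a two-line assembly of the pieces above, with constants depending only on $\Omega$ and shape regularity through $\mathbb T$.
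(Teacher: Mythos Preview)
Your proposal is correct and matches the paper's own proof, which is a one-line appeal to Lemmas \ref{quasi-orth:pre} and \ref{sigmaH:minus:sigmahath} plus the triangle inequality. Your remark about upgrading the $\ell^1$-type bound in Lemma \ref{sigmaH:minus:sigmahath} to the $\ell^2$ form via Cauchy--Schwarz and finite overlap is well placed and in fact clarifies a step the paper glosses over.
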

\begin{proof}
Lemmas \ref{quasi-orth:pre} and \ref{sigmaH:minus:sigmahath} together with a triangle inequality yields the result. 
\end{proof}

To distinguish the specific level in the successive refinements, let $\widetilde{\Sigma}(\mathcal T_l)$ and $U(\mathcal T_l)$ denote the finite element spaces on $\mathcal T_l$, and let $(\bm \sigma(\mathcal T_l), w(\mathcal T_l))\in \widetilde{\Sigma}(\mathcal T_l)\times U(\mathcal T_l)$ represent the corresponding discrete solutions. 
\begin{lemma}[Estimator reduction]\label{estimator:reduction}
  Given any positive constant $\epsilon$, there exists $\lambda: = 1-2^{-1/2}<1$ and $\mathcal C_{\epsilon}>0$ such that 
    \begin{equation*}
        \eta^2(\mathcal T_l) \leq (1+\epsilon)\left(\eta^2(\mathcal T_{l-1})-\lambda \eta^2(\mathcal T_{l-1}, \mathcal M_{l-1})\right)+\mathcal C_{\epsilon}\|\bm \sigma(\mathcal T_l)-\bm \sigma(\mathcal T_{l-1})\|_{\mathbb C^{-1}}^2
    \end{equation*}
    and 
    \begin{equation*}
        \operatorname{osc}^2(f, \mathcal T_l) \leq \operatorname{osc}^2(f, \mathcal T_{l-1}) -\lambda \operatorname{osc}^2(f, \mathcal T_{l-1}\backslash\mathcal T_l).
    \end{equation*}
\end{lemma}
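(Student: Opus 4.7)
The plan is to follow the standard estimator reduction strategy in the spirit of \cite{hu2018unified} (and many prior AFEM analyses), adapted to the three constituent terms of $\eta^2(\mathcal T_h,K)$ in \eqref{estimator:on:K}. The main workhorses are: (i) the geometric fact from newest vertex bisection that every $K\in\mathcal M_{l-1}$ splits into children $K'\subset K$ with $h_{K'}^2\le \tfrac12 h_K^2$, so $h_{K'}\le 2^{-1/2}h_K$, and likewise every refined interior edge is split into two halves of length $h_e/2$; (ii) a triangle-inequality splitting followed by Young's inequality with parameter $\epsilon$; and (iii) an elementwise/edgewise inverse estimate to absorb the difference $\bm\sigma(\mathcal T_l)-\bm\sigma(\mathcal T_{l-1})$ into $\|\cdot\|_{\mathbb C^{-1}}$.

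For the estimator, I would first partition $\mathcal T_l=(\mathcal T_l\cap\mathcal T_{l-1})\cup\{K'\in\mathcal T_l:K'\subsetneq K,\,K\in\mathcal M_{l-1}\}$ and write
\begin{equation*}
\eta^2(\mathcal T_l)=\sum_{K\in\mathcal T_l\cap\mathcal T_{l-1}}\eta^2(\mathcal T_l,K)+\sum_{K\in\mathcal M_{l-1}}\sum_{K'\subset K,\,K'\in\mathcal T_l}\eta^2(\mathcal T_l,K').
\end{equation*}
On each $K$ and each $e\in\mathcal E(K)$, apply $(a+b)^2\le(1+\epsilon)a^2+(1+\epsilon^{-1})b^2$ with $a$ the coarse-mesh quantity (e.g.\ $h_K\|\operatorname{rot}(\mathbb C^{-1}\bm\sigma(\mathcal T_{l-1}))\|_{0,K}$, or the corresponding jump term) and $b$ the increment caused by replacing $\bm\sigma(\mathcal T_{l-1})$ by $\bm\sigma(\mathcal T_l)$. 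The inverse estimates
\begin{equation*}
h_K\|\operatorname{rot}(\mathbb C^{-1}\bm\psi_h)\|_{0,K}\lesssim\|\bm\psi_h\|_{0,K},\qquad h_e^{1/2}\|[\![\mathbb C^{-1}\bm\psi_h\mathbf t_e]\!]\|_{0,e}\lesssim\|\bm\psi_h\|_{0,\omega_e}
\end{equation*}
applied to $\bm\psi_h=\bm\sigma(\mathcal T_l)-\bm\sigma(\mathcal T_{l-1})\in \widetilde\Sigma_h+\widetilde\Sigma_H$ (nested, so piecewise polynomial on $\mathcal T_l$) together with \eqref{C-norm:equ} bound the $b^2$-sum by $C_\epsilon\|\bm\sigma(\mathcal T_l)-\bm\sigma(\mathcal T_{l-1})\|_{\mathbb C^{-1}}^2$. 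For the $a^2$-part, on unrefined $K\in\mathcal T_l\cap\mathcal T_{l-1}$ one obtains exactly $\eta^2(\mathcal T_{l-1},K)$, while on any $K\in\mathcal M_{l-1}$ summing over its children $K'$ the volume term contributes at most $2^{-1}h_K^2\|\operatorname{rot}(\mathbb C^{-1}\bm\sigma(\mathcal T_{l-1}))\|_{0,K}^2$ (factor $h_{K'}^2\le h_K^2/2$), the data term $2^{-1}h_K^4\|f-\mathcal Q_{l-1}f\|_{0,K}^2$ (using also that $\mathcal Q_l$ is a better projector than $\mathcal Q_{l-1}$), and each edge jump sum reduces by the factor $1/2$ either because the edge length halves or because a new interior edge in the interior of $K$ produces a jump that is controlled by $\|\bm\sigma(\mathcal T_l)-\bm\sigma(\mathcal T_{l-1})\|$ (since jumps of $\bm\sigma(\mathcal T_{l-1})\mathbf t$ across such edges vanish). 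Collecting these gives the $(1-\lambda)$ discount on $\eta^2(\mathcal T_{l-1},\mathcal M_{l-1})$ with $\lambda=1-2^{-1/2}$.

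The oscillation inequality is easier: $\mathcal Q_l f|_K=\mathcal Q_{l-1}f|_K$ for $K\in\mathcal T_l\cap\mathcal T_{l-1}$ and, for $K\in\mathcal M_{l-1}$ split into children $K'$, orthogonality of $\mathcal Q_l$ gives $\sum_{K'\subset K}\|f-\mathcal Q_l f\|_{0,K'}^2\le\|f-\mathcal Q_{l-1}f\|_{0,K}^2$ while $h_{K'}^4\le 2^{-2}h_K^4\le 2^{-1}h_K^4$. Hence the refined contribution satisfies
\begin{equation*}
\sum_{K'\subset K}h_{K'}^4\|f-\mathcal Q_l f\|_{0,K'}^2\le (1-\lambda)\,h_K^4\|f-\mathcal Q_{l-1}f\|_{0,K}^2,
\end{equation*}
which summed over $\mathcal M_{l-1}\supset\mathcal T_{l-1}\setminus\mathcal T_l$ yields the claimed contraction with the same $\lambda=1-2^{-1/2}$.

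The main obstacle I anticipate is bookkeeping for the edge-jump term: each edge $e\in\mathcal E(K)$ for $K\in\mathcal M_{l-1}$ must be identified as either split (halving $h_e$) or created in the interior of $K$ (in which case $[\![\mathbb C^{-1}\bm\sigma(\mathcal T_{l-1})\mathbf t_e]\!]=0$, so only the increment $\bm\sigma(\mathcal T_l)-\bm\sigma(\mathcal T_{l-1})$ contributes), and shared edges must not be double-counted between neighbouring elements. Once this combinatorial accounting is done cleanly, a single application of Young's inequality with the chosen $\epsilon$ yields the stated estimator reduction with $\mathcal C_\epsilon$ depending only on $\epsilon$ and shape-regularity.
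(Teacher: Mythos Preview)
Your proposal is correct and follows exactly the standard estimator-reduction argument of Cascon--Kreuzer--Nochetto--Siebert that the paper itself invokes (the paper's proof consists of the single line ``Proceeding as in \cite[Corollary~3.4]{cascon2008quasi} yields the results''). Two minor bookkeeping corrections: the partition of $\mathcal T_l$ should use $\mathcal T_{l-1}\setminus\mathcal T_l$ rather than $\mathcal M_{l-1}$ (completion may refine additional elements, but since $\mathcal M_{l-1}\subset\mathcal T_{l-1}\setminus\mathcal T_l$ the stated reduction on $\mathcal M_{l-1}$ still follows), and the edge-term reduction factor is $2^{-1/2}$ (from $h_{K'}\le 2^{-1/2}h_K$ in the $h_K\|[\![\cdot]\!]\|_{0,e}^2$ term), not $1/2$---this is precisely why the sharp constant is $\lambda=1-2^{-1/2}$.
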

\begin{proof}
   Proceeding as in \cite[Corollary 3.4]{cascon2008quasi} yields the results. 
\end{proof}
 Given $f\in L^2(\Omega)$, for $(\bm \sigma, w)$ being the exact solution of \eqref{clamped:bd:con}, define 
 \begin{align*}
        E_{l} = \|\bm \sigma-\bm \sigma(\mathcal T_l)\|_{\mathbb C^{-1}}^2+\gamma\eta^2(\mathcal T_l)+(\beta+\gamma)\operatorname{osc}^2(f, \mathcal T_l).
    \end{align*}

\begin{theorem}
   There exist positive constants $0<\alpha<1$, $\beta>0$, and $\gamma>0$ such that 
    \begin{equation*}
        E_l\leq \alpha E_{l-1}.
    \end{equation*}
\end{theorem}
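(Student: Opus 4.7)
The plan is to combine Theorem~\ref{quasi:orthogonality} with the two reductions of Lemma~\ref{estimator:reduction} and the reliability estimate of Theorem~\ref{relia:bd:upper}, under the standard assumption that the marking step of the adaptive loop obeys the Dörfler criterion $\eta^2(\mathcal T_{l-1},\mathcal M_{l-1})\ge\theta\,\eta^2(\mathcal T_{l-1})$ for some fixed $\theta\in(0,1]$. I abbreviate $e_l:=\|\bm\sigma-\bm\sigma(\mathcal T_l)\|_{\mathbb C^{-1}}^2$.

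For a parameter $\delta\in(0,1)$, I first apply Theorem~\ref{quasi:orthogonality} to $e_l$, multiply the estimator-reduction inequality by $\gamma$, multiply the oscillation-reduction inequality by $\beta+\gamma$, and sum the three. After rearrangement the result reads
\begin{equation*}
\begin{aligned}
E_l\le{}&\tfrac{1}{1-\delta}\,e_{l-1}+\gamma(1+\epsilon)\eta^2(\mathcal T_{l-1})+(\beta+\gamma)\operatorname{osc}^2(f,\mathcal T_{l-1})-\gamma(1+\epsilon)\lambda\,\eta^2(\mathcal T_{l-1},\mathcal M_{l-1})\\
&+\bigl(\gamma\mathcal C_\epsilon-\tfrac{1}{1-\delta}\bigr)\|\bm\sigma(\mathcal T_l)-\bm\sigma(\mathcal T_{l-1})\|_{\mathbb C^{-1}}^2+\bigl(\tfrac{\mathcal C}{\delta(1-\delta)}-(\beta+\gamma)\lambda\bigr)\operatorname{osc}^2(f,\mathcal T_{l-1}\setminus\mathcal T_l).
\end{aligned}
\end{equation*}
The parameters $\gamma$ and $\beta$ are then constrained so that the two cross terms on the second line become non-positive and can be dropped: $\gamma\mathcal C_\epsilon\le\tfrac{1}{1-\delta}$ and $(\beta+\gamma)\lambda\ge\tfrac{\mathcal C}{\delta(1-\delta)}$.

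Next I would invoke the Dörfler bound and convert the remaining excesses on the right-hand side into $\eta^2(\mathcal T_{l-1})$ via two splits. First, use $\operatorname{osc}^2(f,\mathcal T_{l-1})\le\eta^2(\mathcal T_{l-1})$ (oscillation is a piece of the estimator) to write
\[
(\beta+\gamma)\operatorname{osc}^2(f,\mathcal T_{l-1})\le\alpha(\beta+\gamma)\operatorname{osc}^2(f,\mathcal T_{l-1})+(1-\alpha)(\beta+\gamma)\eta^2(\mathcal T_{l-1})
\]
for the target contraction constant $\alpha\in(0,1)$. Second, split $\tfrac{1}{1-\delta}e_{l-1}=\alpha\,e_{l-1}+\bigl(\tfrac{1}{1-\delta}-\alpha\bigr)e_{l-1}$ and bound the excess by reliability, $e_{l-1}\le C_{\mathrm{rel}}^2\,\eta^2(\mathcal T_{l-1})$, from Theorem~\ref{relia:bd:upper}. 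Collecting terms leaves
\begin{equation*}
E_l\le\alpha\bigl(e_{l-1}+(\beta+\gamma)\operatorname{osc}^2(f,\mathcal T_{l-1})\bigr)+A\,\eta^2(\mathcal T_{l-1}),
\end{equation*}
with $A=\bigl(\tfrac{1}{1-\delta}-\alpha\bigr)C_{\mathrm{rel}}^2+(1-\alpha)(\beta+\gamma)+\gamma(1+\epsilon)(1-\lambda\theta)$, so that the contraction $E_l\le\alpha E_{l-1}$ reduces to the single inequality
\begin{equation*}
\bigl(\tfrac{1}{1-\delta}-\alpha\bigr)C_{\mathrm{rel}}^2+(1-\alpha)(\beta+\gamma)\le\gamma\bigl(\alpha-(1+\epsilon)(1-\lambda\theta)\bigr).
\end{equation*}

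The final step is to tune the free parameters in the order $\theta\to\epsilon\to\alpha\to\delta,\gamma,\beta$: pick $\epsilon>0$ so small that $(1+\epsilon)(1-\lambda\theta)<1$, choose $\alpha\in\bigl((1+\epsilon)(1-\lambda\theta),1\bigr)$, then take $\delta$ sufficiently small and $\gamma$ in the window
\[
\frac{\bigl(\tfrac{1}{1-\delta}-\alpha\bigr)C_{\mathrm{rel}}^2+(1-\alpha)(\beta+\gamma)}{\alpha-(1+\epsilon)(1-\lambda\theta)}\le\gamma\le\frac{1}{(1-\delta)\mathcal C_\epsilon},
\]
with $\beta$ pinned down by the earlier constraint $(\beta+\gamma)\lambda\ge\tfrac{\mathcal C}{\delta(1-\delta)}$. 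The hard part is exactly this window argument: $\mathcal C_\epsilon$ in Lemma~\ref{estimator:reduction} typically diverges as $\epsilon\to0$, so the upper bound on $\gamma$ competes against the lower bound enforced by the $\eta$-contraction. Feasibility ultimately rests on the strict slack $\alpha-(1+\epsilon)(1-\lambda\theta)>0$ created by the Dörfler step, jointly with an auxiliary smallness of $\delta$; once that window is shown to be non-empty by explicit balancing of $\delta,\epsilon,\alpha$ in terms of $\theta,\lambda,C_{\mathrm{rel}},\mathcal C$, every constant involved is mesh-independent, and the resulting $\alpha<1$ applies uniformly at every level $l\ge 1$.
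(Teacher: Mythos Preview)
Your proposal is correct and follows essentially the same route as the paper. The paper's own proof simply invokes the estimator reduction (Lemma~\ref{estimator:reduction}), reliability (Theorem~\ref{relia:bd:upper}), and quasi-orthogonality (Theorem~\ref{quasi:orthogonality}) and defers the actual contraction argument to the references \cite{carstensen2014axioms,hu2018unified,huang2012convergence}; you have written out that standard argument in detail. One minor remark: your ``window'' constraint on $\gamma$ contains $\beta+\gamma$ on its left end, so the cleaner way to close the loop is to first fix $\epsilon$ (hence $\mathcal C_\epsilon$), then set $\gamma$ at its upper bound $1/((1-\delta)\mathcal C_\epsilon)$ and $\beta+\gamma$ at its lower bound $\mathcal C/(\delta(1-\delta)\lambda)$, observe that at $\alpha=1$ your final inequality reduces to $\delta C_{\mathrm{rel}}^2\le(1-\rho)/\mathcal C_\epsilon$ with $\rho=(1+\epsilon)(1-\lambda\theta)$, choose $\delta$ small enough to make this strict, and then take $\alpha<1$ by continuity---this removes the apparent circularity you flag.
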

\begin{proof}
    In light of Lemma \ref{estimator:reduction}, the reliability established in Theorem \ref{relia:bd:upper}, and the quasi-orthogonality in Theorem \ref{quasi:orthogonality}, the proof can be concluded as in \cite{carstensen2014axioms,hu2018unified,huang2012convergence}.
\end{proof}
For $s>0$, define the approximation class $\mathbb A_s$ as
\begin{align*}
    \mathbb A_s=\{(\bm \sigma, f): |\bm \sigma, f|_s<\infty\},
\end{align*}
where 
\[|\bm \sigma, f|_s: = \sup_{N>0}\left(N^s \inf_{|\mathcal T_l|-|\mathcal T_0|\leq N}\, \inf_{\bm \tau_h\in \widetilde{\Sigma}(\mathcal T_l)}\|\bm \sigma-\bm \tau_h\|_{\mathbb C^{-1}}^2+\operatorname{osc}(f, \mathcal T_l)\right).\]
Let $\mathcal M_l$ represent a set of marked elements with minimal cardinality, $(\bm \sigma, w)$ the exact solution of \eqref{clamped:bd:con} and $(\mathcal T_l, \widetilde{\Sigma}(\mathcal T_l)\times U(\mathcal T_l), \bm \sigma(\mathcal T_l), w(\mathcal T_l))$ any level of the sequence of triangulations, finite element spaces and discrete solutions produced by the adaptive finite element methods with the marking parameter $\theta$. Then the optimality is presented below. 
\begin{theorem}[Optimality]It holds that 
\begin{equation*}
    \|\bm \sigma-\bm \sigma(\mathcal T_l)\|^2_{\mathbb C^{-1}}+\operatorname{osc}^2(f,\mathcal T_l)\lesssim |\bm \sigma, f|_s(|\mathcal T_l|-|\mathcal T_0|)^{-s}\quad\text{for}\, \, (\bm \sigma,f)\in\mathbb A_s.
\end{equation*}
\end{theorem}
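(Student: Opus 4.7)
The plan is to deduce optimality by combining the contraction of the combined error--estimator quantity $E_l$ (established in the preceding theorem) with the axioms-of-adaptivity framework of Carstensen--Feischl--Page--Praetorius and the unified analysis in \cite{hu2018unified,carstensen2014axioms,huang2012convergence}. The key structural inputs are already in place: stability and reduction of the estimator (Lemma \ref{estimator:reduction}), reliability (Theorem \ref{relia:bd:upper}), quasi-orthogonality (Theorem \ref{quasi:orthogonality}), and crucially the discrete reliability (Theorem \ref{discrete:reliability}) whose constant is mesh-independent thanks to the nestedness $\widetilde\Sigma_H\subset\widetilde\Sigma_h$ established in Section \ref{relax:vertex}.

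First I would fix the Dörfler marking parameter $\theta$ in the admissible regime provided by Theorem \ref{discrete:reliability}; that is, $\theta$ is chosen small enough so that the contraction constant $\alpha<1$ from the previous theorem is available and so that the classical ``optimality of Dörfler marking'' lemma applies. Concretely, given $(\bm\sigma,f)\in\mathbb A_s$, for the level $l$ and a parameter $\mu>0$ to be fixed, I would use the definition of $|\bm\sigma,f|_s$ to select a triangulation $\mathcal T_\mu\in\mathbb T$ with
\[
|\mathcal T_\mu|-|\mathcal T_0|\lesssim \mu^{-1/s}|\bm\sigma,f|_s^{1/s},\qquad \inf_{\bm\tau_h\in\widetilde\Sigma(\mathcal T_\mu)}\|\bm\sigma-\bm\tau_h\|_{\mathbb C^{-1}}^2+\operatorname{osc}^2(f,\mathcal T_\mu)\le\mu.
\]
Then I would consider the overlay $\mathcal T_\mu\oplus \mathcal T_l$, invoke Theorem \ref{discrete:reliability} between the discrete solutions on $\mathcal T_l$ and on this overlay, and combine with the reliability on $\mathcal T_\mu$ to bound the discrete error by the oscillation plus the part of the estimator supported on $\mathcal T_l\setminus(\mathcal T_\mu\oplus\mathcal T_l)\subset\mathcal T_l\setminus\mathcal T_\mu$. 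A suitable choice $\mu\sim\theta\,(\eta^2(\mathcal T_l)+\operatorname{osc}^2(f,\mathcal T_l))$ then shows that $\mathcal T_l\setminus\mathcal T_\mu$ satisfies the Dörfler property on $\mathcal T_l$.

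Since $\mathcal M_l$ is chosen as a marked set of minimal cardinality fulfilling Dörfler marking, this yields $\#\mathcal M_l\lesssim |\mathcal T_\mu|-|\mathcal T_0|\lesssim E_l^{-1/(2s)}|\bm\sigma,f|_s^{1/s}$, where $E_l$ may replace the estimator-plus-oscillation by reliability and efficiency. Summing the NVB closure estimate $|\mathcal T_l|-|\mathcal T_0|\lesssim\sum_{j<l}\#\mathcal M_j$ of \cite{stevenson2008completion} and invoking the geometric decay of $E_j$ from the contraction theorem produces
\[
|\mathcal T_l|-|\mathcal T_0|\lesssim |\bm\sigma,f|_s^{1/s}\sum_{j<l}E_j^{-1/(2s)}\lesssim |\bm\sigma,f|_s^{1/s}E_l^{-1/(2s)},
\]
which upon rearrangement together with reliability yields the claimed bound $\|\bm\sigma-\bm\sigma(\mathcal T_l)\|_{\mathbb C^{-1}}^2+\operatorname{osc}^2(f,\mathcal T_l)\lesssim |\bm\sigma,f|_s(|\mathcal T_l|-|\mathcal T_0|)^{-s}$.

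The main obstacle, and the reason the whole machinery of Section 5 had to be built, is the discrete reliability step: the standard axioms-of-adaptivity proof requires a mesh-independent constant in $\|\bm\sigma_h-\bm\sigma_H\|_{\mathbb C^{-1}}^2\lesssim\eta^2(\mathcal T_H,\mathcal T_H\setminus\mathcal T_h)$, and this hinges on the nestedness $\widetilde\Sigma_H\subset\widetilde\Sigma_h$, on the extended discrete Helmholtz decomposition $\operatorname{sym}\operatorname{curl}\widetilde V_h=\widetilde\Sigma_h\cap\ker(\operatorname{div}\operatorname{div})$ (Lemma \ref{extend:hemholtz}), and on the locality of the quasi-interpolant $\widetilde I_H$ in Lemma \ref{sigmaH:minus:sigmahath}. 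Once these ingredients are combined with the Dörfler-optimality lemma and the NVB mesh-closure estimate, the remaining steps are routine bookkeeping following \cite{cascon2008quasi,hu2018unified,carstensen2014axioms,huang2012convergence}.
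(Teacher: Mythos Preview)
Your proposal is correct and follows essentially the same route as the paper: the paper's proof simply records that estimator reduction (Lemma \ref{estimator:reduction}), quasi-orthogonality (Theorem \ref{quasi:orthogonality}), discrete reliability (Theorem \ref{discrete:reliability}), and efficiency \eqref{efficiency:1} are in place and then invokes \cite{carstensen2014axioms,hu2018unified,huang2012convergence}, which encapsulates exactly the D\"orfler-optimality\,/\,overlay\,/\,NVB closure\,/\,contraction argument you spell out. Your write-up is thus a faithful unpacking of the cited framework rather than a different approach.
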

\begin{proof}
    As the estimator reduction in Lemma \ref{estimator:reduction}, the quasi-orthogonality in Theorem \ref{quasi:orthogonality}, the discrete reliability in Theorem \ref{discrete:reliability}, and the efficiency \eqref{efficiency:1} have been established, the optimality follows from \cite{carstensen2014axioms,hu2018unified,huang2012convergence}. 
\end{proof}


\section{Numerical examples}
 Some numerical examples are performed in this section to validate the theoretical results.

\subsection{L-shaped domain with a clamped corner}
The computation domain is $\Omega = (-1,1)\times (-1,1)\slash ([-1,0]\times [-1,0])$. The exact solution in polar coordinates is given as follows:
\begin{align*}
	&w(r,\phi) = r^{1+\alpha}\left(\cos[(\alpha+1)(\phi-\pi/4)]+C \cos[(\alpha-1)(\phi-\pi/4)]\right),\\
	 &\alpha  \approx  0.544483736782464, \quad C \approx 1.8414.
\end{align*}
Take $\mathbb C$ in \eqref{equ:0} as an identity matrix. The load function $f = \Delta^2 w = 0$. Let $\bm \sigma = \nabla^2 w$. The clamped part of the boundary reads 
\begin{align*}
	&w = \partial_{n} w  = 0 \quad  \text{on}\quad (-1,0]\times \{0\}\cup \{0\}\times (-1,0],
\end{align*}
while the remaining boundary is free boundary. This is a singular case with $\bm \sigma \in H^{\alpha-\epsilon}(\Omega;\mathbb{S})$ for all $\epsilon>0$, that is, $\bm \sigma\notin H^1(\Omega;\mathbb S)$ and with shear force $\operatorname{div}\bm \sigma \notin L^2(\Omega;\mathbb R^2)$. 

Figure \ref{LshapeMesh} illustrates the L-shaped domain with the adaptively refined meshes corresponding to the a posteriori error estimator $\eta$. Strong refinements are concentrated near the origin. The errors are plotted against the number of degrees of freedom $N$ in Figure \ref{fig:convergence:Lshaped}. 
Figure \ref{fig:convergence:Lshaped} shows that under uniform mesh-refinement, the suboptimal convergence rates of $\mathcal O(N^{-0.27})$ for $\bm \sigma$ and $\mathcal O(N^{-0.55})$ for $w$, corresponding to the expected rates $h^\alpha$ and $h^{2\alpha}$, respectively, in terms of the maximal mesh-size $h$. Despite the singularity, the adaptive algorithm recovers the optimal convergence rates $\mathcal O(N^{-1})$ for displacement errors $\|w-w_h\|_0$ and $\mathcal O(N^{-2})$ for $\|\bm \sigma-\bm \sigma_h\|_0$, $|w-w_h^\ast|_{2,h}$, and $\eta$ under adaptive refinements.

\begin{figure}[htbp]
\includegraphics[width=0.65\textwidth]{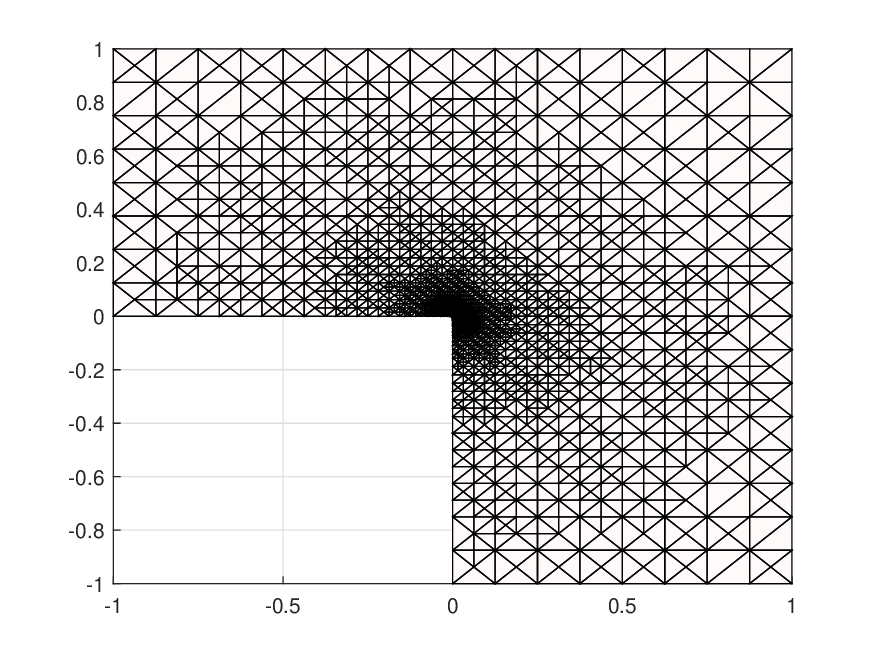}
 \caption{Adaptive meshes for L-shaped domain with a clamped corner}
 \label{LshapeMesh}
\end{figure}

\begin{figure}[htbp]
\includegraphics[width=0.75\textwidth]{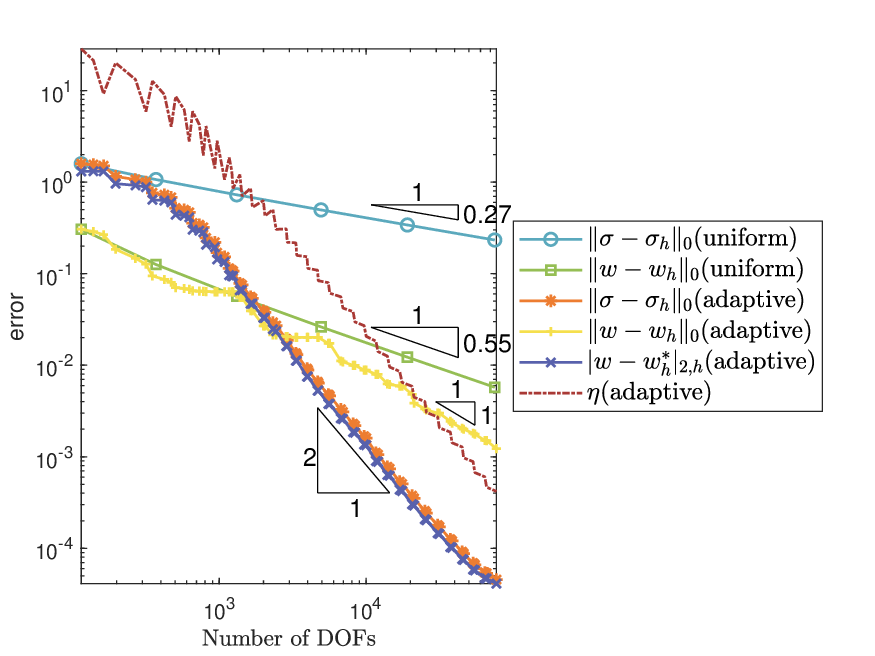}
 \caption{Clamped L-corner: The convergence history of the errors and the estimator}
 \label{fig:convergence:Lshaped}
\end{figure}


\subsection{L-shaped domain with a simply supported corner}
This case considers a solution with lower regularity on an L-shaped domain $\Omega = (-1,1)\times (-1,1)\slash ([0,1]\times [-1,0])$, where simply supported boundary conditions are imposed on the two boundary edges forming the reentrant corner. More precisely, the exact solution in polar coordinates is given by:
\begin{align*}
	&w(r,\phi) = r^{\frac{4}{3}}\sin(\frac{2}{3}\theta).
\end{align*}
Take $\mathbb C$ in \eqref{equ:0} as an identity matrix. The load function $f = \Delta^2 w = 0$. Let $\bm \sigma = \nabla^2 w$. The simply supported part of the boundary is 
\begin{align*}
	&w =\Delta w= 0 \quad  \text{on}\quad (0,1]\times \{0\}\cup \{0\}\times (-1,0],
\end{align*}
while the remaining boundary is clamped.

Figure \ref{LshapeMesh120} shows the L-shaped domain and the adaptively refined mesh based on the a posteriori error estimator 
$\eta$. Figure \ref{fig:convergence:Lshaped:m} plots the errors against $N$. Uniform refinements yield suboptimal rates, while adaptive refinements recover optimal convergence despite the singularity.

\begin{figure}[htbp]
\includegraphics[width=0.6\textwidth]{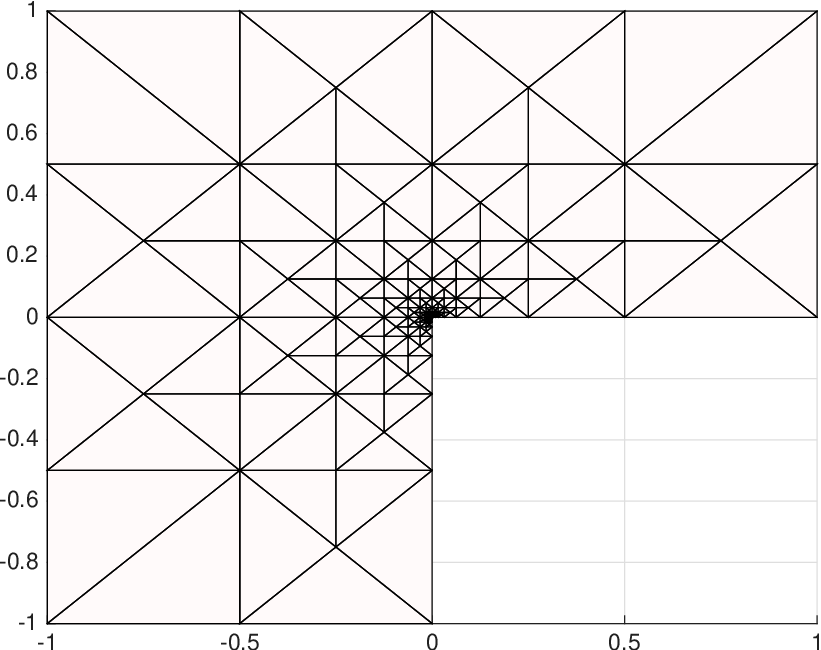}
 \caption{Adaptive meshes for L-shaped domain with a simply supported corner}
 \label{LshapeMesh120}
\end{figure}

\begin{figure}[htbp]
\includegraphics[width=0.7\textwidth]{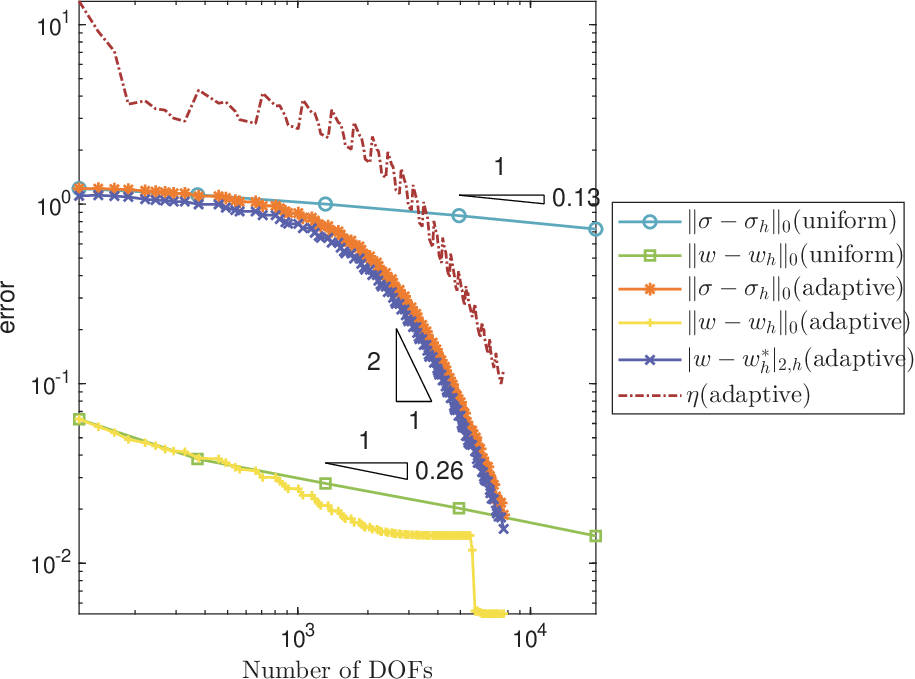}
 \caption{Simply supported L-corner: The convergence history of the errors and the estimator}
 \label{fig:convergence:Lshaped:m}
\end{figure}


\begin{remark}
The bending moment $\bm \sigma$ exhibits a point singularity at the reentrant corner. Actually, the continuity of $\Sigma_h$ at the reentrant corner can be relaxed by enforcing only normal continuity, in a manner similar to the treatment of $H(\operatorname{div},\mathbb S)$ conforming elements in \cite[Section 4.1]{hu2021partial}.
\end{remark}

\subsection{The equation with jump coefficients}
The computational domain is $\Omega=(0,1)\times (0,1)$. Take $\mathbb C:=\alpha(\mathbf{x})\mathbf I$ in \eqref{equ:0} with
	\ben
	\alpha(\mathbf{x})=\left\{\begin{array}{ll}
		1~~~~~~~~&\text{in}~~(0,0.5)\times (0, 1),\\
		0.2~~~~~~~~&\text{in}~~(0.5,1)\times (0, 1).
	\end{array}\right.
	\een
    Here $\mathbf I$ is an identity matrix. 
Let the exact solution be
	\ben
	w=\left\{
	\begin{array}{ll}
		(2.88x^2-4.48x+1.92)x^2y^4(y-1)^4(0.4-1)^{-1}, ~~~~&(0,0.5)\times (0, 1),\\
		(-4.8x^2+6.4x-1.6)(x-1)^2y^4(y-1)^4(0.4-1)^{-1}, ~~~~&(0.5,1)\times (0, 1).
	\end{array}
	\right. 
	\een
Note that $\alpha(\mathbf x)$ varies in the left-hand side and right-hand side of the interface $x=0.5$. For all vertices on the interface $x=0.5$, the tangential-tangential continuity of the discrete symmetric bending moment function is relaxed as in Subsection \ref{relax:vertex}, leading to an extended $H(\operatorname{div}\operatorname{div})$ element space for solving this problem with jump coefficients. A corresponding postprocessed displacement $w_h^\ast\in W_h^\ast$ is defined by \eqref{post:pro:1} with $k=3$.

Table \ref{Ta1} displays the errors and convergence rates on uniform triangular meshes. It can be observed that the optimal order of convergence for both $\bm \sigma$ and $w$ is achieved. Errors $|w-w_{h}^{\ast}|_{2,h}$ convergence to $0$ with order $\mathcal O(h^4)$. Besides, errors $\|\mathcal Q_{h}w-w_{h}\|_{0}$ and $|\mathcal Q_{h}w-w_{h}|_{2,h}$ exhibit superconvergence of order $\mathcal O(h^4)$.

	\begin{table}[h!]\footnotesize\tabcolsep 10pt
		\begin{center}
			\caption{The error and the order of convergence on uniform meshes }\label{Ta1}\vspace{-2mm}
		\end{center}
		\begin{center}
			\begin{tabular}{c| c c c c c c }
				\toprule
				&$\|\bm \sigma-\bm \sigma_{h}\|_{0}$    &$h^{n}$     & $\|\operatorname{div}\operatorname{div}(\bm \sigma-\bm \sigma_{h})\|_{0}$   &$h^{n}$  &$\|w-w_{h}\|_{0}$  &$h^{n}$\\
				\midrule
				1 &4.7037e-03     &$-$     &5.1719e-01  &$-$ &1.3716e-04 &$-$ \\
				2  &2.7415e-03  &0.78     &6.5941e-01 & -0.35 &6.2782e-04 &1.13  \\
				3  &4.2808e-04   &2.68    &3.1020e-01   &1.09  &1.9384e-05 &1.70 \\
				4  &3.0212e-05   &3.82    &8.5000e-02   &1.87 &5.4184e-05  &1.84 \\
				5  &1.9634e-06     &3.94   &2.1742e-02  &1.98 &1.4091e-06  &1.94 \\
				\toprule
				&$\|\mathcal Q_{h}w-w_{h}\|_{0}$    &$h^{n}$        &$|\mathcal Q_{h}w-w_{h}|_{2,h}$ &$h^{n}$  &$|w-w_{h}^{\ast}|_{2,h}$  &$h^{n}$\\
				\midrule
				1&1.3010e-04&$-$  &1.1463e-03 &$-$  &7.7396e-03&$-$\\
				2 &2.8492e-05  &2.19 &1.0054e-03   &0.19 &4.2551e-03 &0.86\\
				3&7.8629e-06   &1.86 &3.1250e-04  &1.69 &6.9398e-04 &2.62\\
				4&6.1317e-07   &3.68 &2.8884e-05   &3.44  &5.1392e-05 &3.76\\
				5 &4.0323e-08   &3.93 &2.0723e-06   &3.80 &3.3972e-06 &3.92\\
				\bottomrule
			\end{tabular}
		\end{center}
	\end{table}

\bibliographystyle{plain}      
\bibliography{bibfile}

\begin{appendices}
\section{The proof of the inf-sup condition}\label{sec:appendix}
To obtain the well-posedness of the problem \eqref{mixed:bd:con}, the inf-sup condition is established through the following lemma. 
\begin{lemma}\label{step2:infsup}
    For $\bm \phi \in H^1(\Omega;\mathbb R^2)$ satisfying $\bm \phi\cdot \mathbf n |_{\Gamma_F}=0$, there exists some $\bm \tau^\ast\in H^1(\Omega;\mathbb S)$ with $\mathbf n^{\mathrm T}\bm \tau^\ast \mathbf n |_{\Gamma_S\cup \Gamma_F}=0$ and $\mathbf t^{\mathrm T}\bm \tau^\ast\mathbf n|_{\Gamma_F}=0$ such that $\operatorname{div}\bm \tau^\ast=\bm \phi$ and $\|\bm \tau^\ast\|_1\lesssim\|\bm \phi\|_0$.
\end{lemma}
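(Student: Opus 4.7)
The plan is to realise $\bm\tau^\ast$ as the sign-reversed stress tensor of an auxiliary mixed linear elasticity problem whose natural Neumann-type boundary conditions are built to coincide with those prescribed on $\bm\tau^\ast$.

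Concretely, fix Lam\'e parameters $\mu,\lambda>0$, put $\mathcal{A}\bm u:=2\mu\,\varepsilon(\bm u)+\lambda(\operatorname{div}\bm u)\bm I$, and introduce the displacement space
\begin{equation*}
\bm V:=\{\bm v\in H^1(\Omega;\mathbb R^2):\bm v|_{\Gamma_C}=\bm 0,\ \bm v\cdot\mathbf t|_{\Gamma_S}=0\}.
\end{equation*}
I would first find $\bm u\in\bm V$ solving
\begin{equation*}
\int_\Omega\mathcal{A}\bm u:\varepsilon(\bm v)\,dx=-\int_\Omega\bm\phi\cdot\bm v\,dx\qquad\text{for all}\ \bm v\in\bm V,
\end{equation*}
which is well posed by Korn's inequality on $\bm V$ (thanks to $\Gamma_C$ having positive measure) together with Lax--Milgram, yielding $\|\bm u\|_1\lesssim\|\bm\phi\|_0$, and then set $\bm\tau^\ast:=-\mathcal{A}\bm u$. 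Symmetry is immediate. Testing against $\bm v\in C_c^\infty(\Omega;\mathbb R^2)\subset\bm V$ gives $\operatorname{div}\bm\tau^\ast=\bm\phi$ in the distributional sense. Reinserting an arbitrary $\bm v\in\bm V$ and integrating by parts, the residual boundary integral collapses to $\int_{\Gamma_S}(\mathbf n^{\mathrm T}\bm\tau^\ast\mathbf n)(\bm v\cdot\mathbf n)\,ds+\int_{\Gamma_F}(\bm\tau^\ast\mathbf n)\cdot\bm v\,ds=0$; since $\bm v\cdot\mathbf n$ on $\Gamma_S$ and $\bm v$ on $\Gamma_F$ are unconstrained inside $\bm V$, this forces the required $\mathbf n^{\mathrm T}\bm\tau^\ast\mathbf n|_{\Gamma_S\cup\Gamma_F}=0$ and $\mathbf t^{\mathrm T}\bm\tau^\ast\mathbf n|_{\Gamma_F}=0$. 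The hypothesis $\bm\phi\cdot\mathbf n|_{\Gamma_F}=0$ enters as the compatibility making $\operatorname{div}\bm\tau^\ast=\bm\phi$ consistent with the overdetermined full Neumann trace $\bm\tau^\ast\mathbf n=\bm 0$ on $\Gamma_F$, and it is also precisely the condition needed, when this lemma is invoked, to conclude that $\bm\tau^\ast$ belongs to $\Sigma_0$.

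The only missing piece, and the step I expect to be the main technical obstacle, is the bound $\|\bm\tau^\ast\|_1\lesssim\|\bm\phi\|_0$, equivalent to the $H^2$ estimate $\|\bm u\|_2\lesssim\|\bm\phi\|_0$ for this mixed Dirichlet--slip--Neumann Lam\'e system on the polygon $\Omega$. Interior $H^2$ regularity and regularity up to the relative interior of each straight boundary edge are classical, the latter via tangential difference quotients producing $H^1$ control on $\varepsilon(\bm u)$. The genuinely delicate step is at the finitely many vertices where the boundary type switches between $\Gamma_C$, $\Gamma_S$, and $\Gamma_F$; I plan to handle them by a local flattening-and-reflection argument that extends the data across one of the straight sides to a smooth enlarged patch on which the problem becomes pure Dirichlet or pure Neumann elasticity, for which $H^2$ regularity is standard, and then to patch the local estimates with a partition of unity subordinate to a cover of $\Omega$ by smooth patches. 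The hypothesis $\bm\phi\cdot\mathbf n=0$ on $\Gamma_F$ furnishes precisely the parity needed for reflections across free-boundary segments to yield data that still sits in $L^2$ of the enlarged domain. Combining the local $H^2$ estimates with the Lax--Milgram bound $\|\bm u\|_1\lesssim\|\bm\phi\|_0$ then produces the claimed $H^1$ estimate for $\bm\tau^\ast$.
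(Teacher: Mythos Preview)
Your construction via the auxiliary elasticity problem is natural and correctly identifies the boundary conditions: the weak form on $\bm V$ indeed forces $\mathbf n^{\mathrm T}\bm\tau^\ast\mathbf n=0$ on $\Gamma_S\cup\Gamma_F$ and $\mathbf t^{\mathrm T}\bm\tau^\ast\mathbf n=0$ on $\Gamma_F$, and $\operatorname{div}\bm\tau^\ast=\bm\phi$ follows by testing with compactly supported fields. The gap is exactly where you locate it yourself: the bound $\|\bm\tau^\ast\|_1\lesssim\|\bm\phi\|_0$ is equivalent to $\bm u\in H^2(\Omega;\mathbb R^2)$ with $\|\bm u\|_2\lesssim\|\bm\phi\|_0$, and this $H^2$ regularity is \emph{false} in the generality the paper requires. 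The domain $\Omega$ is an arbitrary Lipschitz polygon with no convexity assumption (the numerical section uses L-shaped domains), and for the Lam\'e system on nonconvex polygons the solution generically fails to be in $H^2$ near reentrant corners, regardless of boundary type. Your reflection scheme does not rescue this: reflecting across one straight side of a corner with interior angle $\alpha$ produces an enlarged patch with a corner of angle $2\alpha$, which is smooth only when $\alpha=\pi/2$; for general angles the reflected domain remains nonsmooth and the ``pure Dirichlet/Neumann $H^2$ regularity is standard'' claim is simply not available. The hypothesis $\bm\phi\cdot\mathbf n|_{\Gamma_F}=0$ provides parity for an odd/even extension of the \emph{data} but does nothing about the geometric singularity of the domain.

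The paper sidesteps regularity theory entirely by a contradiction--compactness argument: assuming the inf--sup bound fails for a normalised sequence $\{\bm\phi_n\}$, one deduces $\|\varepsilon(\bm\phi_n)\|_{H^{-1}}\to 0$, uses the compact embedding $L^2\hookrightarrow H^{-1}$ together with the Korn-type inequality $\|\bm\psi\|_0\lesssim\|\bm\psi\|_{H^{-1}}+\|\varepsilon(\bm\psi)\|_{H^{-1}}$ (Amrouche--Ciarlet--Ciarlet) to extract an $L^2$-limit $\bm\phi^\ast$ with $\varepsilon(\bm\phi^\ast)=0$, hence a nonzero rigid motion, and then exhibits a single $\bm\tau^\ast$ with the prescribed boundary traces for which $(\operatorname{div}\bm\tau^\ast,\bm\phi^\ast)=\langle\bm\tau^\ast\mathbf n,\bm\phi^\ast\rangle_{\Gamma_C\cup\Gamma_S}\neq 0$, using only that $\Gamma_C$ has positive measure. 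This route never asks for more than $H^1$ smoothness of $\bm\tau^\ast$ and is insensitive to corner angles, which is why it succeeds on general polygons where your constructive approach breaks down.
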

\begin{proof}
    This shall be proved by a contradiction argument. In fact, suppose that such $\bm \tau^\ast$ does not exist, then there would exist a sequence of $\{\bm \phi_n\}$ in $H^1(\Omega;\mathbb R^2)$ satisfying $\bm \phi_n\cdot\mathbf n|_{\Gamma_F}=0$ and $\|\bm \phi_n\|_0=1$ such that
\begin{equation}\label{assump:1}
   \lim_{n\rightarrow \infty}\sup_{
   \substack{\bm \tau^\ast\in H^1(\Omega;\mathbb S)\\
   \mathbf n^{\mathrm T}\bm \tau^\ast\mathbf n|_{\Gamma_S\cup\Gamma_F}=0\\
   \mathbf t^{\mathrm T}\bm \tau^\ast\mathbf n|_{\Gamma_F}=0}}\frac{(\operatorname{div}\bm \tau^\ast, \bm \phi_n)}{\|\bm \tau^\ast\|_1}= 0.
\end{equation}
Owning to
\begin{align*}
   \|\varepsilon(\bm \phi_n)\|_{H^{-1}} =\sup_{\bm \tau\in H^1(\Omega;\mathbb S)\atop \bm \tau\mathbf n|_{\partial \Omega}=0}\frac{(\varepsilon(\bm \phi_n), \bm \tau)}{\|\bm \tau\|_1}\leq \sup_{
   \substack{\bm \tau^\ast\in H^1(\Omega;\mathbb S)\\
   \mathbf n^{\mathrm T}\bm \tau^\ast\mathbf n|_{\Gamma_S\cup\Gamma_F}=0\\
   \mathbf t^{\mathrm T}\bm \tau^\ast\mathbf n|_{\Gamma_F}=0}}\frac{(\bm \phi_n, \operatorname{div}\bm \tau^\ast)}{\|\bm \tau^\ast\|_1}, 
\end{align*}
the assumption \eqref{assump:1} leads to 
\begin{equation}\label{qn:H:-1}
   \lim_{n\rightarrow\infty}\|\varepsilon(\bm \phi_n)\|_{H^{-1}} =0.
\end{equation}
In addition, since $L^2(\Omega;\mathbb R^2)$ is compactly embedded in $H^{-1}(\Omega;\mathbb R^2)$ and $\|\bm \phi_n\|_0=1$, some subsequence, say $\{\bm \phi_n\}$ for simplicity, converges in $H^{-1}(\Omega;\mathbb R^2)$ to some limit $\bm \phi^\ast$:
\begin{equation}\label{qn:convergent:to:p:in:H}
    \lim_{n\rightarrow \infty}\|\bm \phi_n-\bm \phi^\ast\|_{H^{-1}} = 0.
\end{equation}
The Korn inequality in $L^2$ presented in \cite[Theorem 3.2]{amrouche2006characterizations} gives rise to
\begin{equation}
    \|\bm \phi_n-\bm \phi_m\|_0\lesssim \|\bm \phi_n-\bm \phi_m\|_{H^{-1}}+\|\varepsilon(\bm \phi_n-\bm \phi_m)\|_{H^{-1}}.
\end{equation}
According to \eqref{qn:convergent:to:p:in:H} and \eqref{qn:H:-1}, $\{\bm \phi_n\}$ is a Cauchy sequence in $L^2(\Omega;\mathbb R^2)$. Thus 
\begin{equation}\label{qn:L2:p}
    \lim_{n\rightarrow \infty}\|\bm \phi_n-\bm \phi^\ast\|_0=0.
\end{equation}
A triangle inequality leads to
\begin{equation}\label{p:H:-1:0}
\|\varepsilon(\bm \phi^\ast)\|_{H^{-1}} \leq \|\varepsilon(\bm \phi^\ast-\bm \phi_n)\|_{H^{-1}}+\|\varepsilon(\bm \phi_n)\|_{H^{-1}}.
\end{equation}
The definition of the $H^{-1}$-norm and the Cauchy inequality result in
\begin{equation}
    \lim_{n\rightarrow 0} \|\varepsilon(\bm \phi^\ast-\bm \phi_n)\|_{H^{-1}} \lesssim \lim_{n\rightarrow 0}\|\bm \phi^\ast-\bm \phi_n\|_{0}=0.
\end{equation}
Using this and \eqref{qn:H:-1}, passing the right hand side of the inequality \eqref{p:H:-1:0} to the limit, one can obtain $\|\varepsilon(\bm \phi^\ast)\|_{H^{-1}} =0$, Therefore, 
\[\bm \phi^\ast=\bm a +b\begin{pmatrix}
  y\\
  -x
\end{pmatrix}\quad\text{with} \, \, \bm a\in\mathbb R^2\, \, \text{and}\, \, b\in \mathbb R.\]
Since $\|\bm \phi^\ast\|_0=1$ and $\bm \phi^\ast|_{\Gamma_C}\not \equiv 0$, an integration by parts gives rise to 
\begin{equation*}
    (\operatorname{div}\bm \tau^\ast, \bm \phi^\ast) = \langle\bm \tau^\ast\mathbf n, \bm \phi^\ast\rangle_{\partial\Omega}=\langle\mathbf n^{\mathrm T}\bm \tau^\ast\mathbf n, \bm \phi^\ast\cdot\mathbf n\rangle_{\Gamma_C}+\langle\mathbf t^{\mathrm T}\bm \tau^\ast\mathbf n, \bm \phi^\ast\cdot\mathbf t\rangle_{\Gamma_C\cup\Gamma_S}.
\end{equation*}
Consequently, there exists some $\bm \tau^
\ast\in H^1(\Omega;\mathbb S)$ satisfying $\mathbf n^{\mathrm T}\bm \tau^\ast\mathbf n|_{\Gamma_S\cup\Gamma_F}=0$ and $\mathbf t^{\mathrm T}\bm \tau^\ast\mathbf n|_{\Gamma_F}=0$ such that 
$(\operatorname{div}\bm \tau^\ast, \bm \phi^\ast)\neq 0$. This contradicts \eqref{assump:1}.
\end{proof}

\emph{Proof of Theorem \ref{inf:sup:con:bd}}
For any $v\in L^2(\Omega)$, there exists some $\bm \phi \in H^1(\Omega;\mathbb R^2)$ satisfying $\bm \phi\cdot\mathbf n|_{\Gamma_F} = 0$, such that $\operatorname{div}\bm \phi = v$ and $\|\bm \phi\|_1\lesssim\|v\|_0$. In light of Lemma \ref{step2:infsup}, for such $\bm \phi$, there exists $\bm \tau^\ast\in H^1(\Omega;\mathbb S)$ satisfying $\mathbf n^{\mathrm T}\bm \tau^\ast \mathbf n |_{\Gamma_S\cup \Gamma_F}=0$ and $\mathbf t^{\mathrm T}\bm \tau^\ast\mathbf n|_{\Gamma_F}=0$ such that $\operatorname{div}\bm \tau^\ast=\bm \phi$ and $\|\bm \tau^\ast\|_1\lesssim \|\bm \phi\|_0$. For any $u\in \Lambda$, an integration by parts shows that 
\begin{align*}
    &(\bm \tau^\ast, \nabla^2 u)-(\operatorname{div}\operatorname{div}\bm \tau^\ast, u)=\langle\bm \tau^\ast\mathbf n, \nabla u\rangle_{\partial\Omega}-(\bm \phi, \nabla u)-(\operatorname{div}\bm \phi, u)\\
    & = \langle\mathbf n^{\mathrm T}\bm \tau^\ast\mathbf n, \partial_n u\rangle_{\partial\Omega}+\langle\mathbf t^{\mathrm T}\bm \tau^\ast\mathbf n, \partial_t u\rangle_{\partial\Omega}-\langle\bm \phi\cdot\mathbf n, u\rangle_{\partial\Omega}\\
    &= \langle\mathbf n^{\mathrm T}\bm \tau^\ast\mathbf n, \partial_n u\rangle_{\Gamma_C}+\langle\mathbf t^{\mathrm T}\bm \tau^\ast\mathbf n, \partial_t u\rangle_{\Gamma_C\cup\Gamma_S}-\langle\bm \phi\cdot\mathbf n, u\rangle_{\Gamma_C\cup\Gamma_S}=0.
\end{align*}
Thus $\bm \tau^\ast\in \Sigma_0$ follows. This combined with $\operatorname{div}\operatorname{div}\bm \tau^\ast=v$ and $\|\bm \tau^\ast\|_{H(\operatorname{div}\operatorname{div})}\lesssim\|v\|_0$ results in
\begin{align*}
 \sup_{\bm \tau \in \Sigma_0\atop \bm \tau \neq 0}\frac{(\operatorname{div}\operatorname{div}\bm \tau,v)}{\|\bm \tau\|_{H(\operatorname{div}\operatorname{div})}}\geq    \frac{(\operatorname{div}\operatorname{div}\bm \tau^\ast,v)}{\|\bm \tau^\ast\|_{H(\operatorname{div}\operatorname{div})}}=\frac{\|v\|_0^2}{\|\bm \tau\|_{H(\operatorname{div}\operatorname{div})}}\geq C\|v\|_0.
\end{align*}
\qed

\section{Basis functions in 2D}
This section provides the basis of the $H(\operatorname{div}\operatorname{div},\Omega;\mathbb S)$ conforming finite elements $\Sigma_{h}$, which greatly facilitates the implementation.
Furthermore, small modifications of these basis functions yield the basis functions for $H(\operatorname{div}\operatorname{div},\Omega;\mathbb S)$ elements in \cite{chen2020finite2d}.

Given a triangle $K\in\mathcal{T}_h$, let $\mn$ denote the unit outnormal of $\partial K$. Suppose $e_{i}$ denotes the edge opposite to vertex $a_i$ of $K$ with $1\leq i\leq 3$. Let $\mn_i=(n_1,n_2)^T$ denote the unit outnormal of $e_i$ and $\mt_i=(-n_2,n_1)^T$ denote the tangential vector of $e_i$. Let $h_i$ denote the height of $e_i$. Let $\lambda_i$ denote the barycentric coordinates with respect to $a_i$. Let $\{\bm{S}_j\}_{j=1}^3$ denote the canonical basis of $\mathbb{S}$ in two dimensions. Let $\bm T_{i,j}:=\mathbf t_i\mathbf t_j^{\mathrm T}+\mathbf t_j\mathbf t_i^{\mathrm T}$.
Define $\bm T_i: = \mathbf t_i\mathbf t_i^{\mathrm T}$, and its orthogonal complement matrices $\bm T_i^{\perp,j}\in \mathbb S$ satisfying $\bm T_i^{\perp,j}:\bm T_i=0$.
For instance, one can select 
\begin{align*}
\bm T_i^{\perp,1}=\mn_i\mn_i^T,\quad\bm T_i^{\perp,2}=\mn_i\mt_i^T+\mt_i\mn_i^T.
\end{align*}
Notably, $\bm  T_i$ and $\bm  T_{i}^{\perp,j}$ also form a set of basis for $\mathbb S$ in two dimensions, denoted by $\{\widetilde{\bm  S}_j\}_{j=1}^3$.

The following relation can significantly facilitate the calculations. 
\begin{remark}
    For any $\bta=\phi \bm  T$ with a scalar function $\phi$ and a constant matrix $\bm  T$, $\bd\bta\cdot\mn=\bm  T\nabla\phi$ and that $\nabla\lambda_i=-h_i^{-1}\mn_i$ with $1\leq i\leq 3$.
\end{remark}

Let $P_k(e)$ denote the space of polynomials of degree $k$ on edge $e$, where $\lambda_i,\lambda_j$ are the barycentric coordinates associated with respect to the two vertices of $e$. Any function $\phi=\sum_{m=0}^kc_m\big(\lambda_i|_e\big)^m\big(\lambda_j|_e\big)^{k-m}\in P_k(e)$ can be extended to $P_k(K)$ via the natural extension of the barycentric coordinates to element $K$. Let
\[c_{i} : = \frac{-h_{i}}{2(\mathbf n_{i}^{\mathrm T}\mathbf t_{i+1})(\mathbf n_{i}^{\mathrm T}\mathbf t_{i+2})}.\]

Define a bubble function space $\Sigma_{\partial K,b,k}=\{\bta\in P_{k}(K;\mathbb{S}):\bta\mn|_{\partial K}=0,\,\bd\bta\cdot\mn|_{\partial K}=0\}$ of $\Sigma_h$. Note that 
\begin{equation*}
    \operatorname{dim}\Sigma_{\partial K,b,k} = \frac{3}{2}k(k-3).
\end{equation*}

The basis functions of $\Sigma_{h}$ are give by:
\begin{enumerate}
    \item (Bubble function basis) The basis function of the bubble function space $\Sigma_{\partial K,b,k}$ on $K$: for $1\leq i\leq 3$, 
\begin{equation*}
\bm \tau^{(1)}_{i } = \left\{\begin{array}{ll}
      (\lambda_{i+1}\lambda_{i+2})^2\phi\bm T_i& \text{for }\, \phi\in P_{k-4}(e_i),\\
      \lambda_i^2\lambda_{i+1}\lambda_{i+2}\phi\bm T_{i+1,i+2}&\text{for }\, \phi\in P_{k-4}(K); \\
    
\end{array}\right.   
\end{equation*}
\item (Edge basis for $\bd\bta\cdot\mn$) The basis function with respect to $(\bd\bta\cdot\mn_i)|_{e_i}$for edge $e_i$ with $1\leq i\leq 3$:
\begin{equation*}
    \bm \tau^{(2)}_{i,\ell }=\left\{\begin{array}{ll}
        \frac{-h_i}{(\mn_i^{\mathrm T}\mt_{i+2})^2}\lambda_i\lambda_{i+1}^2\bm T_{i+2}  &\text{for }\ell=1,\\
          \frac{-h_i}{(\mn_i^{\mathrm T}\mt_{i+1})^2}\lambda_i\lambda_{i+2}^2\bm T_{i+1}  &\text{for }\ell=2,\\
 c_i\lambda_1\lambda_2\lambda_3\phi\bm T_{i+1,i+2}& \text{with }\,\phi\in P_{k-3}(e_i)\text{ for }\ell>2;
    \end{array}\right.
\end{equation*}
\item (Edge basis for $\bta\mn$) The basis function with respect to $\bm \tau \mathbf n_i|_{e_i}$ for edge $e_i$ with $1\leq i\leq 3$ and $1\leq j \leq 2$: 
\begin{align*}
\bm \tau^{(3)}_{i,j} =\left\{\begin{array}{lll} \lambda_{i+1}^2\lambda_{i+2}\bm T_i^{\perp,j}+\frac{\mathbf n_{i+2}^{\mathrm T}\bm T_i^{\perp, j}\mathbf n_{i+2}}{h_{i+2}} \bm\tau^{(2)}_{i+2,2}+\frac{\mathbf n_{i+2}^{\mathrm T}\bm T_i^{\perp, j}\mathbf n_{i}}{h_{i+2}} \bm \tau^{(2)}_{i,1} +\frac{2c_i\mathbf n_{i+1}^{\mathrm T}\bm T_i^{\perp, j}\mathbf n_{i}}{h_{i+1}}  \lambda_1\lambda_2\lambda_3\bm T_{i+1,i+2} ,\\
\lambda_{i+2}^2\lambda_{i+1}\bm T_i^{\perp,j}+\frac{\mathbf n_{i+1}^{\mathrm T}\bm T_i^{\perp, j}\mathbf n_{i+1}}{h_{i+1}} \bm\tau^{(2)}_{i+1,1}+\frac{\mathbf n_{i+1}^{\mathrm T}\bm T_i^{\perp, j}\mathbf n_{i}}{h_{i+1}} \bm \tau^{(2)}_{i,2} +\frac{2c_i\mathbf n_{i+2}^{\mathrm T}\bm T_i^{\perp, j}\mathbf n_{i}}{h_{i+2}}  \lambda_1\lambda_2\lambda_3\bm T_{i+1,i+2},\\
    \phi \bm T_i^{\perp,j}- c_{i} (\nabla\phi )^T \bm T_i^{\perp,j}\mn_{i} \lambda_i\mathbb T_{i+1,i+2} \quad \text{for }\phi =(\lambda_{i+1}\lambda_{i+2})^2P_{k-4}(e_i);
    \end{array}\right.
\end{align*}

\item (Vertex basis) The basis function with respect to $\bm \tau(a_i)$ for vertex $a_i$ with $1\leq i\leq 3$:
\begin{align*}
    \bm \tau^{(4)}_{i,j}=\lambda_i^3\bm{S}_j+\frac{3\mn_{i}^{\mathrm T}\widetilde{\bm S}_j\mathbf n_{i+1}}{h_i}  \bm \tau^{(2)}_{i+1,2}+\frac{3\mn_{i}^{\mathrm T}\widetilde{\bm S}_j\mathbf n_{i+2}}{h_i}  \bm  \tau^{(2)}_{i+2,1} \, \quad\text{for}\, \, 1\leq j\leq 3.
\end{align*}
\end{enumerate}

\begin{remark}
The sign of the the outnormal needs to be taken into consideration when forming global basis functions.
\end{remark}

\begin{remark}
   Simple calculations yield
    \begin{equation}\label{trigger:1}
    \operatorname{div}(\bm \tau^{(2)}_{i,\ell})\cdot\mathbf n_i|_{e_i} = \left\{\begin{array}{ll}
         \lambda_{i+\ell}^2&\text{for} \, \,1\leq \ell\leq 2,  \\
         \lambda_{i+1}\lambda_{i+2}\phi&\text{for}\,\, \ell>2.
    \end{array}\right.
\end{equation} 
This serves as an efficient tool in representing basis functions (3) explicitly. In reality, once $\bm \tau^{(2)}_{i,l}$ is obtained, the basis functions in (3) can be calculated by modifying the $H(\operatorname{div},\Omega;\mathbb S)$ conforming basis functions from \cite{hu2014family}. More precisely, let $\psi_{i,l}\in P_{k-1}(e_i)$ denote the dual basis of $\bd\bta^{(2)}_{i,l}\cdot\mn|_{e_i}$ in the $L^2(e_i)$ sense. For any $\bta\in P_k(K;\mathbb{S})$, the modified function
\begin{align}\label{eq:basisN2D}
\bta^{(3)}=\bta-\sum_{i=1}^3\sum_{l=1}^k\langle \bd\bta\cdot\mn,\psi_{i,l}\rangle_{e_i}\bta^{(2)}_{i,l}
\end{align}
satisfies
$\bd\bta^{(3)}\cdot\mn|_{\partial K}=0$ and $\bta^{(3)}\mn|_{\partial K}=\bta\mn|_{\partial K}$.
Thanks to \eqref{trigger:1}, the explicit representation of $\bm \tau^{(3)}_{i,j,l}$ is presented in (3).  
\end{remark}

\begin{remark}
    A simple modification of basis functions (1)--(4) gives rise to the basis of the $H(\bd\bd,\mathbb{S})$ conforming elements in \cite{chen2020finite2d}. Since any function $\bta$ in (1)--(2) satisfies $\bta\mn|_{\partial K}=0$, this shows $\big(\partial_t(\mt^T\bta\mn)+\bd\bta\cdot\mn\big)|_{\partial K}=\bd\bta\cdot\mn|_{\partial K}$. This means that functions in (1) are bubble functions of \cite{chen2020finite2d} and functions in (2) are basis functions with respect to the degrees of freedom $\big(\partial_t(\mt^T\bta\mn)+\bd\bta\cdot\mn\big)|_{e_i}$. In order to satisfy $\big(\partial_t(\mt^T\bta\mn)+\bd\bta\cdot\mn\big)|_{\partial K}=0$, the functions in (4) with respect to $a_i$ shall be replaced by
    \begin{align*}
        \bm \tau^{(4)}_{i,j}+\frac{3(\mathbf n_i^{\mathrm T}\mathbf t_{i+1})\mathbf t_{i+1}^{\mathrm T}\widetilde{\bm S}_{j}}{h_i}\mathbf n_{i+1}\bm \tau_{i+1,2}^{(2)}+\frac{3(\mathbf n_i^{\mathrm T}\mathbf t_{i+2})\mathbf t_{i+2}^{\mathrm T}\widetilde{\bm S}_{j}}{h_i}\mathbf n_{i+2}\bm \tau_{i+2,1}^{(2)}\quad \text{for}\, 1\leq j\leq 3.
    \end{align*}
Given edge $e_i$,
for any $\bta$ in (3) with $j=1$, $\mt^T\bta\mn|_{\partial K}=0$ shows  $\big(\partial_t(\mt^T\bta\mn)+\bd\bta\cdot\mn\big)|_{\partial K}=\bd\bta\cdot\mn|_{\partial K}=0$. This implies that the functions in (3) with $j=1$ are basis functions of \cite{chen2020finite2d} with respect to $\mn^T\bta\mn|_{e_i}$ in the interior of $e_i$. The remaining bubble functions of \cite{chen2020finite2d} are obtained by modifying
the functions in (3) with $j=2$ as follows
\begin{align*}
    &\bm \tau_{i,2}^{(3)}+\mathbf t_i^{\mathrm T}\bm T_i^{\perp,2}\mathbf n_i\left(\frac{\mathbf n_{i+2}^{\mathrm T}\mathbf t_i}{h_{i+2}}\bm \tau_{i,1}^{(2)}+\frac{2c_i\mathbf n_{i+1}^{\mathrm T}\mathbf t_i}{h_{i+1}}\lambda_1\lambda_2\lambda_3\bm T_{i+1,i+2}\right),\\
   &\bm \tau_{i,2}^{(3)}+\mathbf t_i^{\mathrm T}\bm T_i^{\perp,2}\mathbf n_i\left(\frac{\mathbf n_{i+1}^{\mathrm T}\mathbf t_i}{h_{i+1}}\bm \tau_{i,2}^{(2)}+\frac{2c_i\mathbf n_{i+2}^{\mathrm T}\mathbf t_i}{h_{i+2}}\lambda_1\lambda_2\lambda_3\bm T_{i+1,i+2}\right),\\
    &\bm \tau_{i,2}^{(3)}-c_i\lambda_i\left((\nabla \phi)^{\mathrm T}\mathbf t_i\right) (\mathbf t_i^{\mathrm T}\bm T_{i}^{\perp,2}\mathbf n_i)\bm T_{i+1,i+2}.
\end{align*}

\end{remark}

\end{appendices}

\end{document}